\pdfoutput=1
\documentclass[11pt, xcolor=table]{article}

\setlength{\paperwidth}{8.5in}
\setlength{\paperheight}{11in}
\setlength{\voffset}{-0.2in}
\setlength{\topmargin}{0in}
\setlength{\headheight}{0in}
\setlength{\headsep}{0in}
\setlength{\footskip}{30pt}
\setlength{\textheight}{9in}
\setlength{\hoffset}{0in}
\setlength{\oddsidemargin}{0in}
\setlength{\textwidth}{6.5in}
\setlength{\parskip}{9pt}

\usepackage{graphicx}
\usepackage{tabularx}
\usepackage{threeparttable}
\usepackage{multirow}
\usepackage{url}
\usepackage{amsmath}
\usepackage{comment}
\usepackage{nameref}

\usepackage{placeins}
\usepackage{natbib}[sort]
\usepackage{authblk}
\usepackage{hyperref}
\usepackage{float}
\usepackage{caption}
\usepackage{subcaption}
\usepackage{amssymb}
\usepackage{color}
\usepackage{amsthm}
\usepackage[english]{babel}
\usepackage{bm}
\usepackage{tikz}

\newcommand{\R}{\mathbb{R}}
\newcommand{\set}[1]{\left\{#1\right\}}

\newcommand*\circled[1]{\tikz[baseline=(char.base)]{
            \node[shape=circle,draw,inner sep=1pt] (char) {#1};}}

\newtheorem{theorem}{Theorem}

\theoremstyle{theorem}

\newtheorem{proposition}{Proposition}
\theoremstyle{definition}
\newtheorem{definition}{Definition}
\theoremstyle{lemma}

\title{The effect of protest management strategies on protesting activity and social tension: a mathematical perspective}

\author[1,*]{Wang, W.}
\author[1]{Wessler, T.}
\author[2]{Brantingham, J.}
\author[1]{Rodriguez, N.}

\affil[1]{University of Colorado, Boulder, USA}
\affil[2]{University of California, Los Angeles, USA}
\affil[*]{corresponding author, wuyan.wang@colorado.edu}

\begin{document}
\maketitle
\begin{abstract}

Protest activity, a constitutionally protected right in the United States under the First Amendment, serves as a key tool for individuals with limited individual influence to unite collectively and amplify their impact. Despite its legal recognition, the historical interaction between the government and protesters has been complex. In this study, we extend a model proposed in \cite{Wang_Berestycki_2015} to incorporate the influence of protest management strategies. Our research establishes the existence and stability of traveling wave solutions, supported by both theoretical analyses and numerical simulations. We delve into the impact of two distinct protest management approaches on the qualitative and semi-quantitative characteristics of these traveling waves.  These metrics can aid in categorizing different types of protests.  
\end{abstract}

\section{Introduction}
\label{sec:Introduction}
A protest is a form of collective behavior used by groups to affect political change \cite{Wang_Ratliff_2014,Wang_Meyer_2004}.  In many cases, protests are spontaneous or loosely organized \cite{Wang_Snow_2014} and exhibit interesting spatial and temporal patterns \cite{Wang_Koopmans_2004}.
A self-reinforcing reaction-diffusion model to describe the interplay between the intensity of activity and the level of social tension was introduced in \cite{Wang_Berestycki_2015} by Berestycki and collaborators. This model exhibited intricate behavior and coherent structures, such as traveling wave solutions. We extend this model to include the global effects of policing strategies. We are motivated by the fact that many societies promote protest activity as a right but have fraught relationships with protest events and protesters when it comes to practical matters of policing \cite{Wang_DellaPorta_1998,Wang_Metcalfe_2022}.  We therefore aim to understand the effect of different protest management strategies on the dynamics of protesting activity and social tension.  
The model we will analyze is the system for the interplay of $(u,v, P)$ functions of physical space, $x$, and time $t:$
\begin{align}
\label{Wang_Equation1}
    \begin{cases}
        u_t = d_1 \Delta u + f(u,v,P), \quad x\in \R^2,\;t>0,\\
        v_t = d_2 \Delta v + g(u,v,P), \quad x\in \R^2,\;t>0,\\
        P_t = s(u,v,P), \quad x\in \R,\;t>0,\\
        (u(x,0),v(x,0),p(x,0))=(u_0(x),v_0(x),p(x)).
    \end{cases}
\end{align}
In \eqref{Wang_Equation3} the scalar fields $u,v$, and $P$ measure the level of social activity, social tension, and police density, respectively, with prescribed initial conditions $u_0(x),\;v_0(x)$ and $p(x)$. The reaction functions will be explained in detail in the modeling section, see section \ref{sec:model}, and model the kinetics between the three unknowns.    

\subsection{Model motivation and background}\label{sec:model}
While the factors that lead individuals to make any given decision widely vary, in certain situations many individuals converge in space and time to generate collective behavior.  The macroscopic patterns produced by the collective exhibit regularities that can be studied using mathematical models.  A notable example is that of protests.  
Over the years, social scientists have explored the factors contributing to the convergence of people with a shared objective in both time and space. This research has revealed a crucial element for this collective action to occur: heightened social tension within a group, which often arises from a prolonged situation that deviates significantly from the group's expectations \cite{Wang_Parson_1951}. Additionally, a triggering event typically initiates the collective action \cite{Wang_Gurr_1951, Wang_Snow_2014}.  

Spatial-temporal patterns in protest activities manifest in numerous instances. For instance, data analysis from the 2005 French riots revealed the emergence of traveling wave-like solutions originating from the point of the triggering event \cite{Wang_Bonnasse-Gahot_2018}. Similarly, during the Revolution of Dignity in Ukraine from late 2013 to 2014, a temporal pattern of self-excitation followed by relaxation was observed, with diffusion influenced by the political implications for Oblasts \cite{Wang_Bahid_2023}.

An essential aspect of the evolution of protest activities is the use of police protest management strategies. A key question arises regarding how these varied management strategies impact the overall dynamics \cite{Wang_DellaPorta_1998, Wang_Maguire_2015,Wang_Metcalfe_2022}. Research indicates that, in certain situations, participating in a protest can alleviate stress for participants \cite{Wang_Cosner_1956}. Conversely, in other scenarios, engagement in protesting may result in mental and physical trauma \cite{Wang_Haar_2017b, Wang_Haar_2017a, Wang_Ni_2020}. Mathematical models offer a means to investigate the influence of different protest management strategies, aiming to identify approaches that lead to ``productive action" —  where protest participants experience reduced stress levels and diminished social tension within the group.  

\subsubsection{The model without protest management}

In \cite{Wang_Berestycki_2015}, Berestycki and collaborators introduced a model for the interplay between the level of social activity and social tension 
\begin{eqnarray}\label{Wang_Equation2}
\left\{\begin{array}{cll}
u_{t}&=&d_1 \Delta u+r(v)G(u)-\omega u, \quad x\in \mathbb{R}^n,\;t>0,\\
v_{t}&=&d_2\Delta v+1-h(u)v,\quad x\in \mathbb{R}^n,\;t>0,\\
u(x,0)&=&u_0(x)\quad\text{and}\quad v(x,0)=v_0(x),
\end{array}\right. 
\end{eqnarray} 
where $u$ denotes the {\it level of activity} and $v$ the {\it social tension}.  The functions 
\begin{align*}
r(v) =  \frac{1}{1 + e^{-\beta(v-1)}},\quad  G(u)=\Gamma u(1-u)\quad\text{and}\quad h(u)=\theta (1+u)^{-k_0}
\end{align*}
are based on various sociological theories and interviews with protest participants. Please refer to Table \ref{table:parameters-label} for the physical interpretation of these parameters.  The model is rooted in socio-psychological theories and insights gathered from interviews with participants in the 2005 French riots (Figure \ref{Wang_Figure3}). It is widely accepted that social tension, coupled with a triggering event, can give rise to collective action \cite{Wang_Gurr_1951,Wang_Snow_2014}. Social tension is characterized as a negative emotional state within a group of individuals due to a prolonged, insoluble situation involving a misalignment between social expectations, interests, and the overall needs of the population \cite{Wang_Prokazina_2018, Wang_Parson_1951}. If social tension reaches a critical level, a triggering event—a swift catalyst capable of reshaping the political landscape and attracting or intensifying participation—can spark a protest. A useful analogy is that social tension creates combustible material, and a triggering event acts as the match that ignites the fire.

Data, sociological theories, and interviews with protest participants indicate that protesting activity can be self-exciting. In other words, action begets more action, up to a certain carrying capacity. This phenomenon is captured by the function $G,$ which resembles a Fisher-KPP-like term. However, this self-excitation effect only occurs when there is sufficiently high tension in the system; the function $r$ represents a switching mechanism that modulates the self-excitation effect depending on the social tension. The influence of the level of action on social tension introduces intriguing variations in the dynamics of the model. The modulation of $v$ by $u$ is characterized by the function $h$. The parameter $k_0$ determines whether the relationship between the two variables is enhancing or inhibiting. When $k_0 > 0$, action leads to a slowdown in tension; conversely, when $k_0 < 0$, action decreases the tension. As mentioned earlier, both scenarios have been observed in real life.  On the one hand, many protests induce mental and physical trauma \cite{Wang_Haar_2017b, Wang_Haar_2017a, Wang_Ni_2020}; on the other, some individuals have reported a release of stress after participating in peaceful protests. A situation with the presence of counter-protesters or when a peaceful protest turns violent may lead to $k_0>0$.   

\begin{figure} [H]
     \centering
     \begin{subfigure}[b]{0.5\textwidth}
         \centering
         \includegraphics[width=\textwidth]{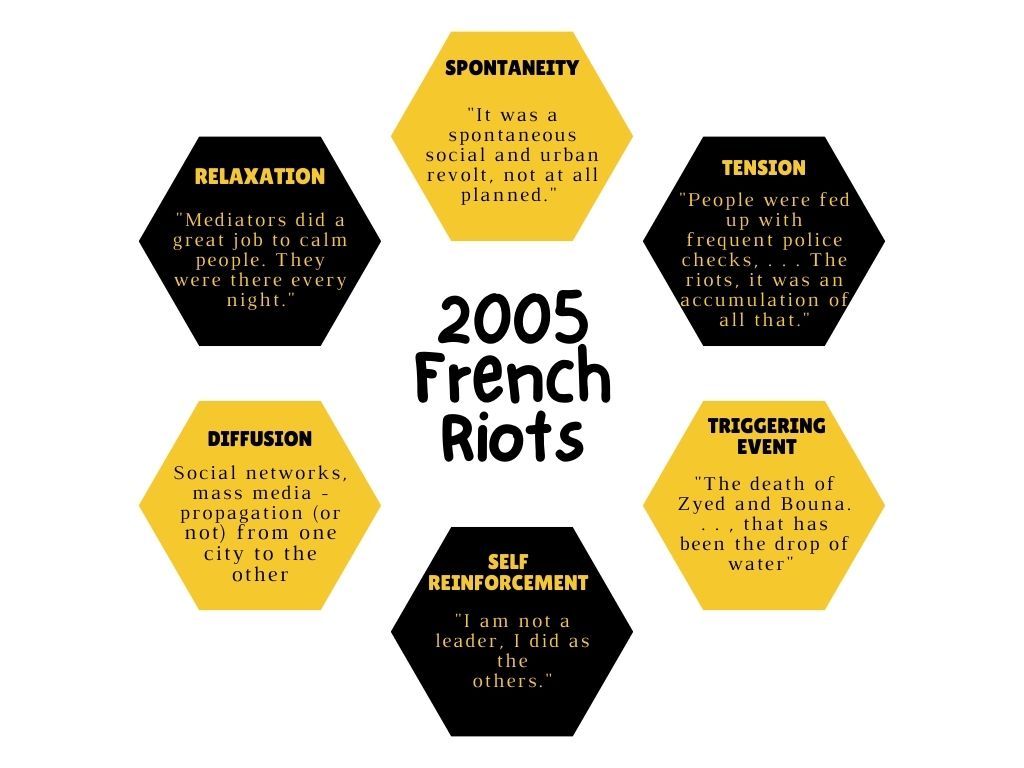}
         \caption{Interviews}
         \label{Wang_Figure1}
     \end{subfigure}
     \hfill
     \begin{subfigure}[b]{0.48\textwidth}
         \centering
         \includegraphics[width=\textwidth]{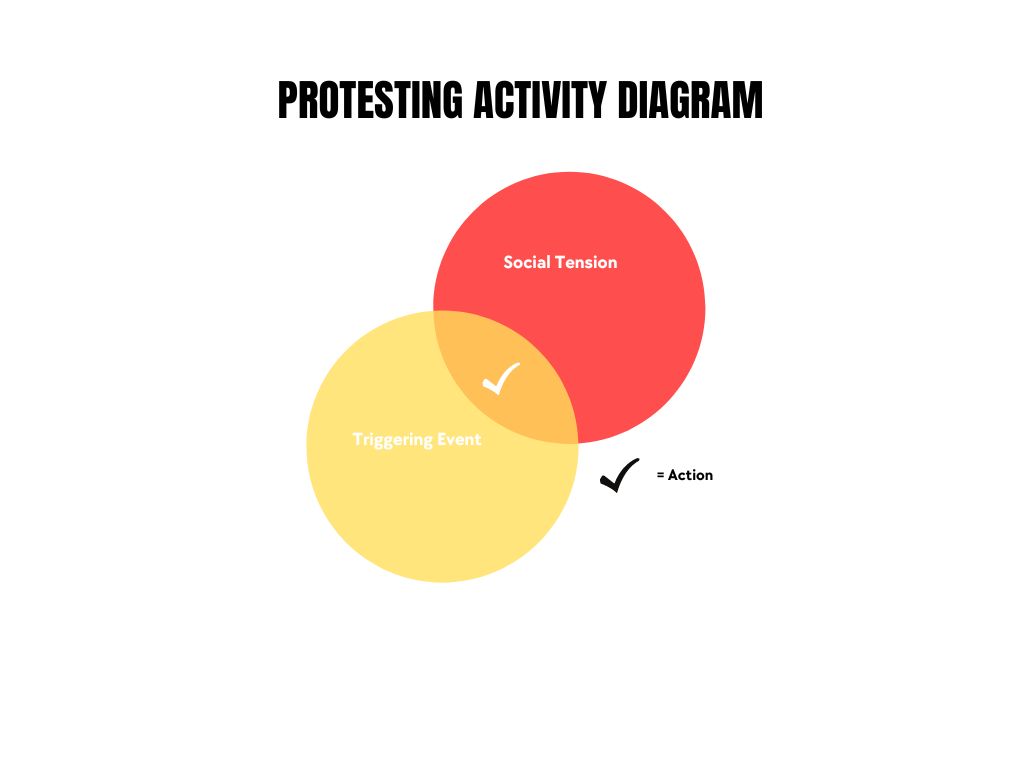}
         \caption{Protesting Activity}
         \label{Wang_Figure2}
     \end{subfigure}
     \caption{Diagrams illustrating the factors that influence the rise of protesting activity.}\label{Wang_Figure3}
\end{figure}

Prior studies have developed mathematical approaches to studying the $k_0$ positive and $k_0$ negative cases. In \cite{Wang_Yang_2020} and \cite{Wang_Yang_2021}, Yang and Rodr{\'{i}}guez have studied the traveling wave solutions both numerically and analytically for both cases. Such solutions in the tension-enhancing case have also been explored in \cite{Wang_Berestycki_2016} in heterogeneous environments where the restriction of information varies by region. The inhibitive case has been observed in \cite{Wang_Bonnasse-Gahot_2018} by fitting the 2005 French riots dataset with a Susceptible-Infected-Recovered (SIR)-type model. Moreover, a model similar to the one we are considering with a time-periodic source term has been studied in \cite{Wang_Berestycki_2018} where convergence to periodic solutions are proved for both the tension-enhancing and tension-inhibiting case.

\subsubsection{Incorporating protest management strategies}
While the original system described in equation \eqref{Wang_Equation2} exhibits intriguing behavior, it lacks integration of the significant impact that protest management has on the overall activity. In the United States, the government frequently employs law enforcement to handle protest activities, employing various protest management strategies. Chapter 2 of \cite{Wang_DellaPorta_1998} introduces five crucial characteristics of protest policing practices, outlined in Table \ref{table:protest_management}. To summarize, these characteristics encompass the degree of police concern for the First Amendment rights of protesters and their obligations to uphold and safeguard those rights, the level of police tolerance for community disruption, the dynamics of communication between police and demonstrators, the extent and method of arrests as a means of managing demonstrators, and the extent and method of using force, either independently or in conjunction with arrests, to control demonstrators.

In the 1960s, the prevailing management model in the United States was an {\bf escalated force model}, characterized by a robust and forceful approach based on deterrence theory \cite{Wang_D'Arcy_2011, Wang_Maguire_2015, Wang_DellaPorta_1998}. Deterrence theory posits that individuals seek to minimize pain and maximize pleasure, leading to the belief that the use of force and arrests would discourage action. The second column of Table \ref{table:protest_management} presents the different characteristics outlined earlier in the context of the escalated force model.  By contrast, in the 1980s and 1990s, the {\bf negotiated management model} emerged as the dominant U.S. approach \cite{Wang_DellaPorta_1998}. In stark opposition to the escalated force model, the primary goals of the negotiated management model were to safeguard protesters' First Amendment rights and preserve lives. The third column of Table \ref{table:protest_management} illustrates the various characteristics within the framework of the Negotiated Management model. While there are other models, including several variations of the Escalated Force model, a more detailed description of the various protest management strategies employed in the United States can be found in \cite{Wang_Maguire_2015}. We will concentrate on the two models highlighted above and presented in Table \ref{table:protest_management}.

\begin{table}[H]
\begin{center}
\begin{tabular}{|l|c|c|}
\hline
\multicolumn{1}{|c|}{\textbf{Characteristic}} & \textbf{Escalated Force} & \textbf{Negotiated Management}\\ \hline
Respect for $1^{st}$ Amend. Rights                                   & low                            & high                                 \\ \hline
Tolerance for disruption                                             & low                            & high                                 \\ \hline
Communication                                                         & low                            & high                                 \\ \hline
Arrest                                                               & quick                          & slow                                 \\ \hline
Use of force                                                          & quick                          & slow                                 \\ \hline
\end{tabular}
\end{center}
\caption{The main characteristics of the Escalated Force and the Negotiated Management models. \label{table:protest_management}}
\end{table}
\vspace{-12pt}
As is often the case, intriguing behaviors frequently stem from conflicting influences. While police presence may dissuade certain individuals from participating \cite{Wang_Taylor_2011}, an inappropriate protest management strategy could result in heightened tensions or increased participation \cite{Wang_Institute_2015}. Our objective is to refine model \eqref{Wang_Equation2} to encompass these opposing effects. Initially, we examine the direct impact of police presence on $u$. We posit that police presence can act as a deterrent or, conversely, motivate people to participate. This effect is represented by modifying the first evolution equation as follows: $u_t = d_1 \Delta u+H(P)r(v)G(u) - \omega u$,
where $H(P) = (1+P)^{k_1}$. Note that in this case when $k_1>0$ the presence of police enhances the level of activity when $k_1<0$ it inhibits the action, and when $k_1=0$ the equation reduces to the original evolution equation.

The presence of police and their management style can also indirectly influence social tension, impacting the dynamics of $v$. This is accomplished by adjusting the second equation as follows:
$$
v_t = d_2 \Delta v+1 - h(u)T(P)v,
$$
where $T(P) = (1+P)^{-k_2}$. It is important to note that when $k_2 > 0$, police presence enhances social tension, whereas when $k_2 < 0$, it inhibits social tension. As mentioned earlier, when $k_2 = 0$, our equation reverts to the original one. Throughout these modifications, we adhere to the convention that a positive $k_i$ ($i = 0, 1, 2$) signifies enhancement, while a negative parameter implies inhibition.  See Table \ref{table:k} for a summary of the interpretation of the sign of these parameters. 

\begin{table}[H]
\begin{tabular}{|c|c|c|}
\hline
\textbf{Parameter} & \textbf{Negative Value}                                                                          & \textbf{Positive Value}                                                                           \\ \hline
$k_0$              & Protesting action is tension-inhibiting                                                          & Protesting action is tension-enhancing                                                            \\ \hline
$k_1$              & \begin{tabular}[c]{@{}c@{}}Police presence has a deterrence \\ effect on Protesting\end{tabular} & \begin{tabular}[c]{@{}c@{}}Police presence has an incendiary \\ effect on Protesting\end{tabular} \\ \hline
$k_2$              & Escalating Force                                                                                 & Negotiated Management Model                                                                       \\ \hline
\end{tabular}
\caption{Summary of the physical interpretation of $k_i$ ($i = 0, 1, 2$).}\label{table:k}
\end{table}

Finally, we assume that the police are deployed at a proportional rate to the level of activity $u$ up to some carrying capacity.  This leads to the following equation: 
$$
P_t = u(1-P),
$$
where the level of activity serves as the growth rate of the police presence, and the carrying capacity of policing is assumed to be one.  Note that we assume no police diffusion.  

Putting everything together, the system that we are interested in is
\begin{align}
\label{Wang_Equation3}
    \begin{cases}
        u_t = d_1 u_{xx} + f(u,v,P),\\
        v_t = d_2 v_{xx} + g(u,v,P), \\
        P_t = s(u,v,P),
    \end{cases}
\end{align}
where the source functions are:
\begin{align}
\label{Wang_Equation4}
\begin{cases}
    f(u,v,P) = \left(1+P\right)^{k_1} \dfrac{1}{1 + e^{-\beta \left(v-\alpha\right)}}\Gamma u \left(1-u\right) - \omega u, \vspace{3pt}\\
    g(u,v,P) = 1 - \left(1+P\right)^{-k_2} \theta \left(1+u\right)^{-k_0}v, \vspace{3pt}\\
    s(u,v,P) = u(1-P).
\end{cases}
\end{align}

\begin{table}[H]
\begin{center}
\begin{tabular}{ |c|c| } 
 \hline
 Parameter & Description \\ 
 \hline\hline
 $d_1$ & diffusivity of protest activity \\ 
 $d_2$ & diffusivity of social tension \\ 
 $\alpha$ & critical level of social tension to transition from without to with self-reinforcement \\ 
 $\omega$ & rate of decay of protest activity \\ 
 $\theta$ & scaling factor for rioting level that scales social tension \\
 $\Gamma$ & rate of rioting self-reinforcement \\ 
 $\beta$ & rate of transition of without to with self-reinforcement \\
 \hline
\end{tabular}
\end{center}
\caption{The description of parameters in system \eqref{Wang_Equation3}.\label{table:parameters-label}}
\end{table}

The descriptions of model parameters are given in Table \ref{table:parameters-label}. For physical reasons, we limit the domains for the parameters to be $\alpha, \beta, \Gamma, \omega > 0.$

\subsubsection{Main Results}

In this work, we investigate the impact of various police management strategies on traveling wave solutions' quantitative and qualitative features. These solutions depict a mobile front where a high level of protest activity continuously spreads into regions with no activity. The traveling wave solutions to system \eqref{Wang_Equation2} consistently leave behind a significantly elevated level of activity and/or social tension.
Our focus involves proving the existence and stability of traveling wave solutions for system \eqref{Wang_Equation3}. The examination of wave existence is addressed in Section \ref{sec:Existence of traveling wave solutions}. We handle cases of monostable and bistable sources separately. We establish the existence of a monotone traveling wave satisfying system \eqref{Wang_Equation3} with a unique wave speed in bistable regimes. For monostable regimes, we demonstrate the existence of monotone traveling waves as long as the wave speed surpasses a specified threshold. The stability analysis is provided in Section \ref{sec:Stability and asymptotic decay rates}, where we also present asymptotic approach rates to the steady-state solutions. Our findings indicate that traveling waves with a bistable source are asymptotically stable with a shift according to the sup-norm. In contrast, waves with a monostable source are asymptotically stable according to a weighted norm, as detailed in the upcoming theorem.

We complement the theoretical results with numerical experiments across various parameter combinations and analyze the outcomes. We present key metrics to assess how parameter alterations influence the system's dynamics quantitatively. These metrics encompass the maximum and cumulative levels of activity and tension experienced and the level of protest activity at the interface infiltrating the region not yet exposed to the protest traveling wave. Additionally, we partition the parameter space into distinct regions based on specific simulation outcomes.  We identify regions in the parameter space where, after the traveling wave has long passed, there are either (1) persistent nonzero levels of protesting with the maximum density of police present, or (2) protest levels have diminished to near zero, and police densities are below maximum levels. Furthermore, we pinpoint regions where protesting and policing strategies have elevated tension following the dynamics of the traveling wave and regions where these strategies have successfully alleviated tensions—whether the tension relief is immediate following the protesting or occurs after a brief rise but ultimately results in a decline in tension.

\section{Background Matters}
In this section, we discuss the spatially homogeneous system, its equilibrium solutions, and the linearized stability of the system about these equilibrium solutions. Moreover, we introduce some relevant definitions and notations for our analysis of traveling wave solutions.  
\subsection{The spatially homogeneous system}
\label{sec:Fixed point analysis on the spatially homogeneous system}
First, we find and analyze the stationary points of the spatially homogeneous system
\begin{align}
\label{Wang_Equation5}
    \begin{cases}
        u_t = H(P)r(v)G(u) - \omega u, \\
        v_t = 1 - h(u)T(P)v,  \\
        P_t = u(1-P).
    \end{cases}
\end{align}
The nullclines for the $P$-equation are $u=0$ or $P=1$. If $u=0$, then we have a hyperplane of fixed points
$(0, (1+P)^{k_2}, P)$
for $0<P<1$, which correspond to the trivial steady states. If $P=1$, then we have fixed points given by the intersection of the three nullclines
\begin{align}
\label{Wang_Equation6}
\begin{cases}
    2^{k_1}r(v)G(u) - \omega u = 0, \\
    2^{k_2}(1+u)^{k_0} = v, \\
    P = 1,
\end{cases}
\end{align}
which correspond to the non-trivial steady states. From these equations of nullclines, we can see that the $u$ value at non-trivial steady states should always be strictly between zero and one. 

The Jacobian of $F = [f, g, s]^T$ is given by
\begin{align}
\label{Wang_Equation7}
DF(\bm{u})  
=  \left[\begin{array}{ccc}
    H(P)r(v)G'(u)-\omega & H(P)r'(v)G(u) & H'(P)r(v)G(u)\\
    -h'(u)T(P)v & -h(u)T(P)& -h(u)T'(P)v\\
    1-P & 0 & -u 
\end{array}\right].  
\end{align}
Then, the characteristic polynomial of the Jacobian is
\begin{align}
\label{Wang_Equation8}
\begin{split}
    & -\lambda^3 + (f_u + g_v + s_P)\lambda^2 + (-f_u(g_v + s_P) - g_v s_P + f_v g_u + f_P s_u)\lambda \\
    & + f_u g_v s_P - f_v g_u s_P + f_v g_P s_u - f_P s_u g_v = 0.
\end{split}
\end{align}
We find the eigenvalues and evaluate them at different steady states.

At the trivial steady state, $f_v,f_P,s_P = 0$, so the characteristic polynomial ~\eqref{Wang_Equation8} reduces to $-\lambda^3 + (f_u + g_v)\lambda^2 - f_u g_v\lambda = 0$, yielding the eigenvalues $0,f_u,g_v$. Evaluated at the trivial steady state, the three eigenvalues become
\begin{align}
\label{Wang_Equation9}
\begin{cases}
    \lambda_1 = 0, \\
    \lambda_2 = (1+P)^{k_1} \dfrac{\Gamma}{1 + e^{-\beta(v^* - \alpha)}} - \omega, \\
    \lambda_3 = -(1+P)^{-k_2},
\end{cases}
\end{align}
where $v^* = (1+P)^{k_2}$. Notice that $\lambda_3 < 0$ always, and the sign of $\lambda_2$ depends on the parameters.

At the nontrivial steady states, the eigenvalues are
\begin{align}
\label{Wang_Equation10}
\begin{cases}
    \lambda_1 = \dfrac{f_u + g_v + \sqrt{(f_u+g_v)^2+4f_vg_u-4f_ug_v}}{2},\vspace{3pt} \\
    \lambda_2 = \dfrac{f_u + g_v - \sqrt{(f_u+g_v)^2+4f_vg_u-4f_ug_v}}{2}, \\
    \lambda_3 = -u^*,
\end{cases}
\end{align}
where the partial derivatives are evaluated at the nontrivial steady states $(u^*,v^*,1)$ which can be obtained by solving ~\eqref{Wang_Equation6}. Note that $\lambda_3$ evaluated at the nontrivial steady states is always real and negative. For the monotone system, it turns out that all eigenvalues are real.  For $\lambda_1,\lambda_2$ to be negative, we need $f_vg_u - f_ug_v < 0$ and $f_u + g_v < 0$, the latter of which can be shown using some algebra, the former of which simplifies to
\begin{align}
\label{Wang_Equation11}
    \omega \beta k_0 e^{-\beta(v^*-\alpha)}\frac{v^*}{1+u^*} < 2^{k_1}\Gamma.
\end{align}

Condition ~\eqref{Wang_Equation11} will be checked numerically for the proofs later in the paper.

For the theoretical results, we focus on analyzing the case when system \eqref{Wang_Equation3} is monotone. However, we will explore more general parameter sets numerically.
By definition, our parabolic system ~\eqref{Wang_Equation3} is monotone if for any $\bm{u} \in \mathbb{R}^3$,
\begin{align*}
    \frac{\partial F_i}{\partial u_j} \ge 0, \quad i,j=1,2,3, \quad i \neq j,
\end{align*}
where $F_i = f,g,s$ for $i=1,2,3$, respectively. One can verify that system \eqref{Wang_Equation3} is monotone when $k_0,k_1,k_2, \beta, \Gamma \ge 0$, $0 \le u, P \le 1.$

We are interested in studying the system with different $k_0,k_1,k_2$ values. Given that changing the rest of the parameters has essentially the same effect on the number of steady states as changing $k_0, k_1, k_2$, we will fix $\alpha = 1, \beta = 1, \Gamma = 1, \omega = 0.5$ from now on unless specified otherwise. For example, varying $\alpha$ changes the number of steady states the same way as varying $k_1$ does.

Now, we divide the $k_1-k_2$ space into different regions depending on how many steady states there are. We consider the tension-enhancing ($k_0\geq 0$) and inhibiting ($k_0<0$) cases separately.

\subsubsection{Tension-enhancing case: $k_0 > 0$}
When $k_0 \ge 0$, the $v$-nullcline is monotone increasing in the positive $u-v$ region. Since we assume $\omega, \Gamma, \beta >0$, the $u$-nullcline is always monotone increasing. In this case, there are either zero, one, or two non-trivial steady states. Figure \ref{Wang_Figure7} shows the number of steady states in the $k_1-k_2$ regime. Given any constant initial condition for $P$, $P$ evolves to reach a specific constant value between zero and one at the trivial steady state, so the red regions indeed represent the regimes with one steady state instead of a hyperplane of them. 

\begin{figure} [H]
     \centering
     \begin{subfigure}[b]{0.32\textwidth}
         \centering
         \includegraphics[width=\textwidth]{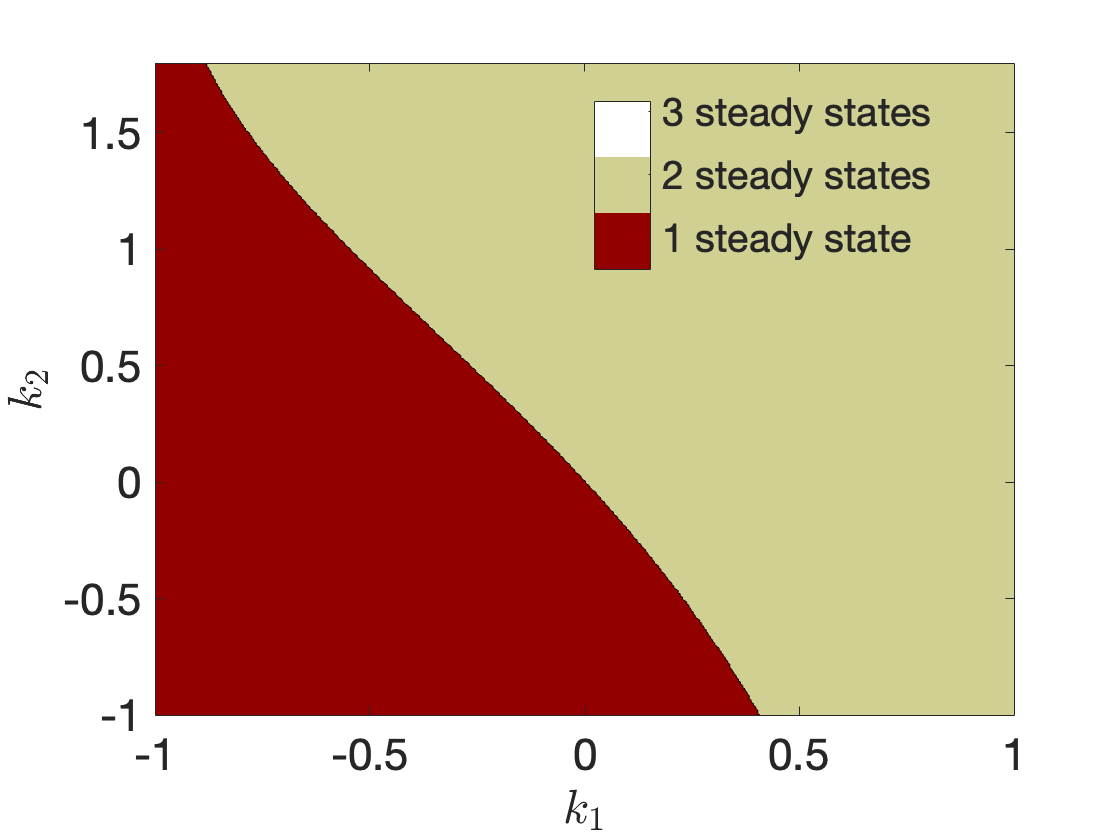}
         \caption{$k_0 = 1$}
         \label{Wang_Figure4}
     \end{subfigure}
     \hfill
     \begin{subfigure}[b]{0.32\textwidth}
         \centering
         \includegraphics[width=\textwidth]{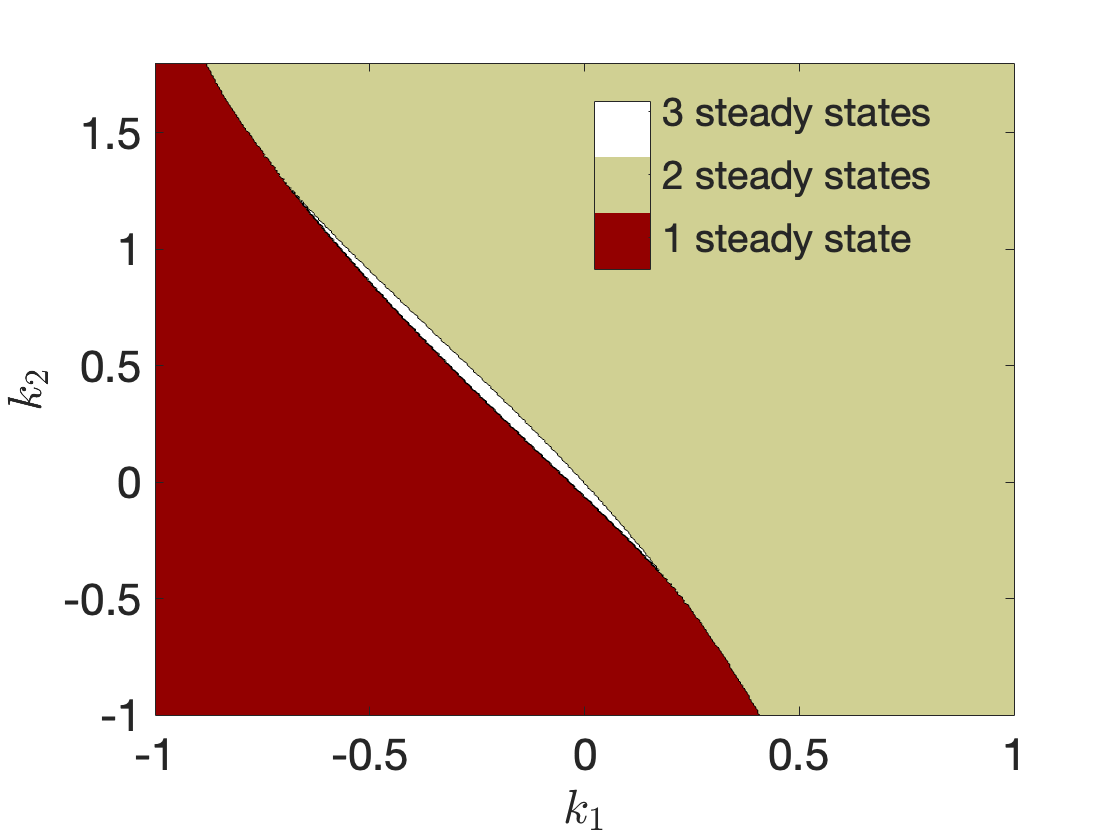}
         \caption{$k_0 = 2.5$}
         \label{Wang_Figure5}
     \end{subfigure}
     \hfill
     \begin{subfigure}[b]{0.32\textwidth}
         \centering
         \includegraphics[width=\textwidth]{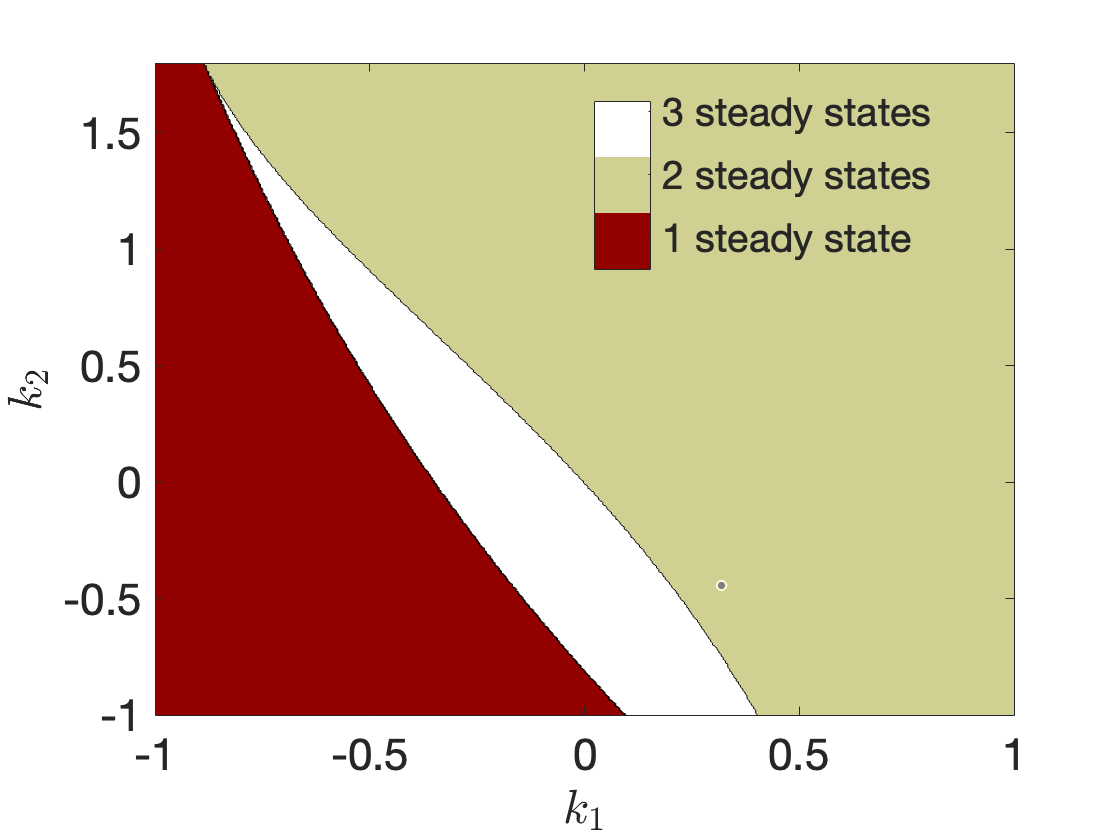}
         \caption{$k_0 = 4.5$}
         \label{Wang_Figure6}
     \end{subfigure}
     \hfill
        \caption{The $k_1-k_2$ space separated into regimes with different numbers of steady states with positive $k_0$, where $\alpha=1, \beta=1, \Gamma=1, \omega=0.5.$}
        \label{Wang_Figure7}
\end{figure}

\subsubsection{Tension-inhibiting case: $k_0 < 0$}
When $k_0 < 0$, the $v$-nullcline is always monotone decreasing in the positive $u-v$ region. Again, the $u$-nullcline is always monotone increasing. In this case, there is either zero or one non-trivial steady state. Figure \ref{Wang_Figure10} shows a plot separating the regions with different numbers of steady states in the $k_1-k_2$ regime. Notice that the magnitude of $k_0$ does not change the diagram as long as $k_0 < 0$.

\begin{figure} [H]
     \centering
     \begin{subfigure}[b]{0.45\textwidth}
         \centering
         \includegraphics[width=\textwidth]{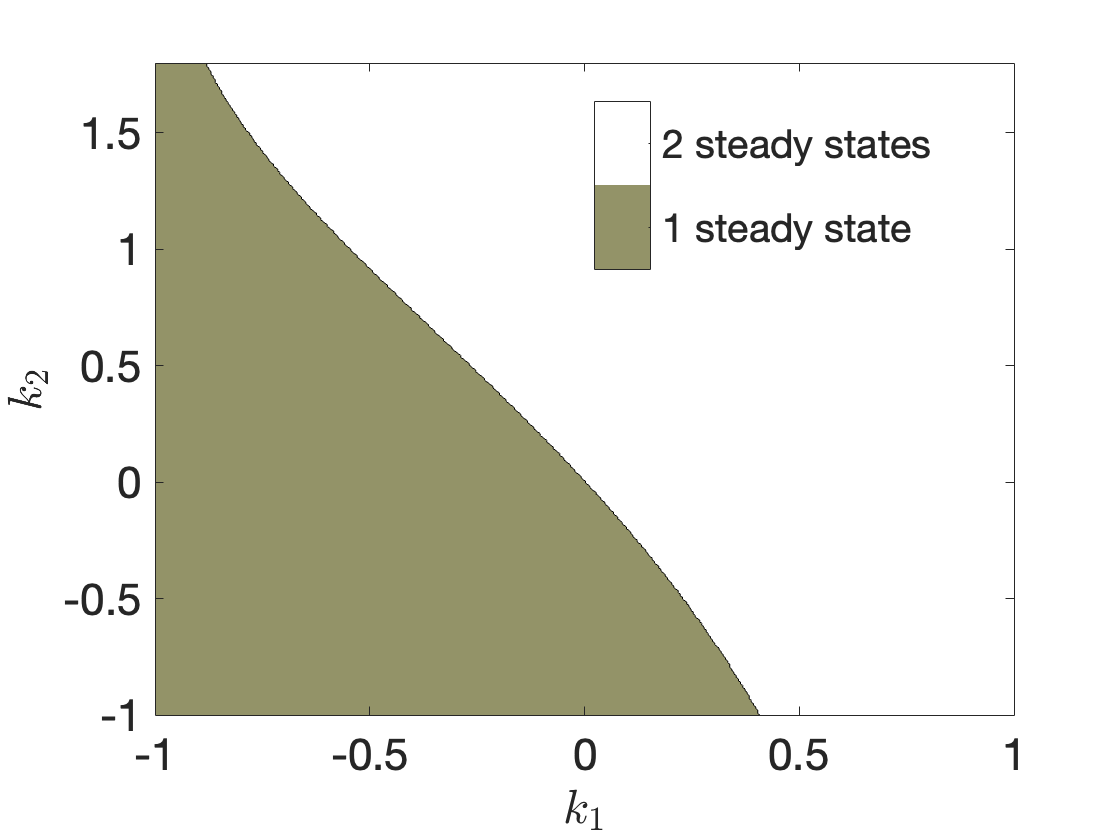}
         \caption{$k_0 = -0.5$}
         \label{Wang_Figure8}
     \end{subfigure}
     \hfill
     \begin{subfigure}[b]{0.45\textwidth}
         \centering
         \includegraphics[width=\textwidth]{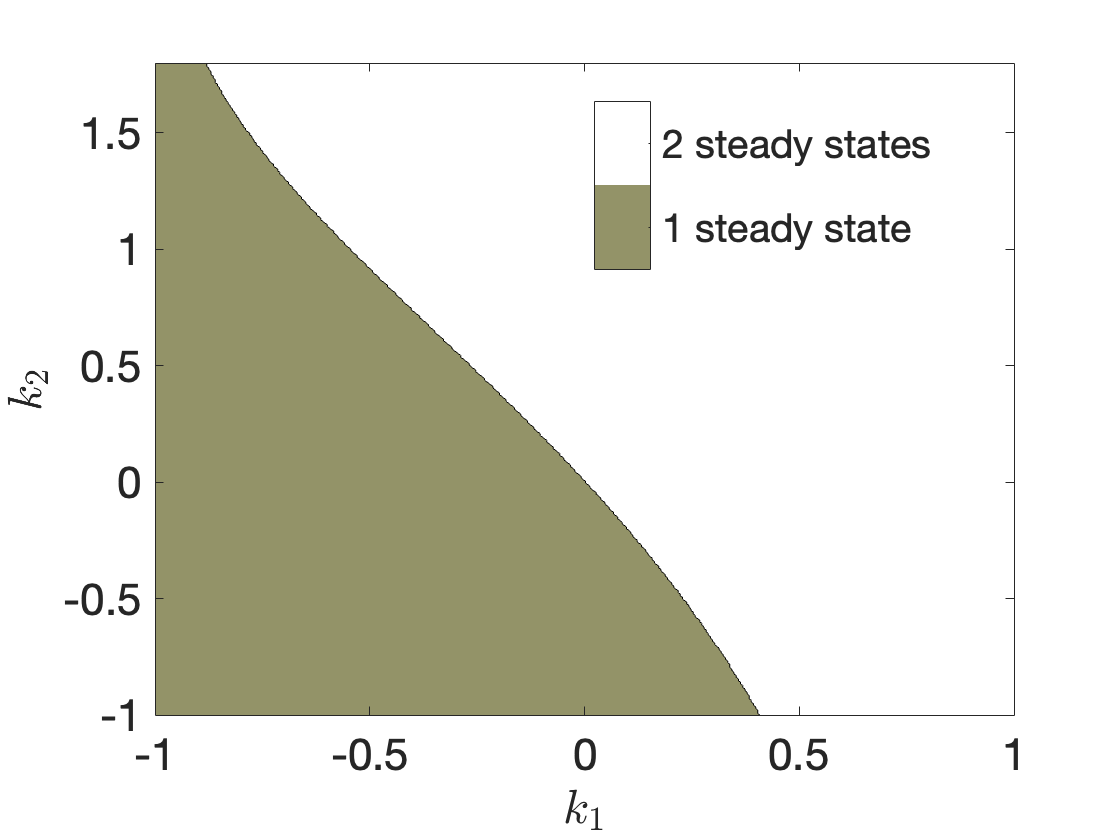}
         \caption{$k_0 = -4.5$}
         \label{Wang_Figure9}
     \end{subfigure}
     \hfill
        \caption{The $k_1-k_2$ space separated into regimes with different numbers of steady states with negative $k_0$, where $\alpha=1, \beta=1, \Gamma=1, \omega=0.5.$}
        \label{Wang_Figure10}
\end{figure}

\subsection{Existence and stability of traveling waves}
We are concerned with the existence and stability of traveling wave solutions to our system. 
Traveling wave solutions to our reaction-diffusion system \eqref{Wang_Equation3} have the vector form  $\bm{u}(x,t) = \hat{\bm{u}}(z)$ where $z=x-ct$ for some wave speed $c \in \mathbb{R}$ and satisfy
\begin{align}
\label{Wang_Equation12}
\begin{cases}
    A \hat{\bm{u}}'' + C \hat{\bm{u}}' + F (\hat{\bm{u}}) = 0, \\
    \lim_{z \to \pm \infty} \hat{\bm{u}}(z) = \hat{\bm{u}}_{\pm}.
\end{cases}
\end{align}
In system ~\eqref{Wang_Equation12}, $F = \begin{bmatrix}
    f \\ g \\ s
\end{bmatrix}$ with $f, g, s$ defined in ~\eqref{Wang_Equation3}, $A = \begin{bmatrix}
    1&0&0 \\ 0&1&0 \\ 0&0&0
\end{bmatrix}$ is the diffusion matrix, and $C = cI$.
Depending on the stability of the stationary points $\hat{\bm{u}}_{\pm}$, three types of sources are possible. The source is bistable if both the vectors $\hat{\bm{u}}_+$ and $\hat{\bm{u}}_-$ are stable. The source is monostable if one of $\hat{\bm{u}}_+$ and $\hat{\bm{u}}_-$ is stable, and the other is unstable. The source is unstable if both $\hat{\bm{u}}_+$ and $\hat{\bm{u}}_-$ are unstable. It can be seen in section \ref{sec:Existence of traveling wave solutions} that the existence and uniqueness of traveling wave solutions depend on the source type.

We are interested in monotone waves, as non-monotone waves for monotone systems are unstable \cite{Wang_Volpert_1994}. For definiteness, we assume that the waves are monotonically decreasing. Consequently, we have $\hat{\bm{u}}_+ < \hat{\bm{u}}_-$ (this inequality should be interpreted component wise).

Let $E$ be a Banach space to study the stability of waves, and we consider small perturbations from a Banach space $H$, lying in $E$. We denote the norm in $H$ by $\lVert \cdot \rVert _{H}$. These spaces will be made precise in each theorem.

\begin{definition}[\cite{Wang_Volpert_1994}]
    A wave $\hat{\bm{u}}(x)$ is \textit{asymptotically stable with shift} according to the norm $\lVert \cdot \rVert _{H}$, if there exists a positive number $\epsilon$ such that for an arbitrary real vector-valued function $\bm{u}_0 \in E$ with $\bm{u}_0 - \hat{\bm{u}} \in H$ and $\lVert \bm{u}_0 - \hat{\bm{u}} \rVert _{H} < \epsilon$, the solution $\bm{u}(x,t)$ of system \eqref{Wang_Equation3} with initial condition $\bm{u}(x,0) = \bm{u}_0 (x)$ exists for all $t>0$, is unique, $\bm{u}(x,t) - \hat{\bm{u}}(x) \in H$, and satisfies the estimate
    \begin{align}
    \label{Wang_Equation13}
        \lVert \bm{u}(x,t) - \hat{\bm{u}}(x+h) \rVert _{H} \le Me^{-bt},
    \end{align}
    where $h$ is a number depending on $\bm{u}_0(x)$; $M>0$ and $b>0$ are independent of $t,h$ and $\bm{u}_0(x)$.
\end{definition}

\begin{definition}[\cite{Wang_Volpert_1994}]
    A wave $\hat{\bm{u}}(x)$ is \textit{asymptotically stable} according to the norm $\lVert \cdot \rVert _{H}$, if there exists a positive number $\epsilon$ such that for an arbitrary real vector-valued function $\bm{u}_0 \in E$ with $\bm{u}_0 - \hat{\bm{u}} \in H$ and $\lVert \bm{u}_0 - \hat{\bm{u}} \rVert _{H} < \epsilon$, the solution $\bm{u}(x,t)$ of system \eqref{Wang_Equation3} with initial condition $\bm{u}(x,0) = \bm{u}_0 (x)$ exists for all $t>0$, is unique, $\bm{u}(x,t) - \hat{\bm{u}}(x) \in H$, and satisfies the estimate
    \begin{align}
    \label{Wang_Equation14}
        \lVert \bm{u}(x,t) - \hat{\bm{u}}(x) \rVert _{H} \le Me^{-bt},
    \end{align}
    where $M>0$ and $b>0$ are independent of $t$ and $\bm{u}_0(x)$.
\end{definition}

\section{Traveling waves in the monotone system: $k_0, k_1, k_2 \ge 0, 0 \le u \le 1$}

\subsection{Existence of traveling wave solutions}
\label{sec:Existence of traveling wave solutions}
To prove the existence of traveling wave solutions to ~\eqref{Wang_Equation3}, we need to determine the signs of the eigenvalues of Jacobian given in \eqref{Wang_Equation7}. 
Let us first look at the eigenvalues at the trivial steady states. Recall that the eigenvalues are given by ~\eqref{Wang_Equation9}, where $\lambda_1 = 0$ and $\lambda_3 < 0$. It is left to determine the sign of $\lambda_2$. Let us denote by $k_2(k_1,P)$ the boundary separating $\lambda_2 > 0$ from $\lambda_2 < 0$. By plotting in the $k_1-k_2$ plane, we find that the curve $k_2(k_1,1)$, represented by the blue dotted line in figure ~\eqref{Wang_Figure14}, coincides with the curve separating the region with two steady states from other numbers of steady states, which we denote by $k_2(k_1)$. More precisely, we see in figure ~\eqref{Wang_Figure14} that, for $P=1$, $\lambda_2 > 0$ when there are two steady states, and $\lambda_2 < 0$ when there are one or three steady states. For convenience, we denote the boundary separating the region with one steady state from other numbers of steady states by $k_2(k_0,k_1)$. Note that $k_2(k_1)$ coincides with $k_2(k_0,k_1)$ when $k_0$ is small (see Figures \ref{Wang_Figure11} and 
\ref{Wang_Figure15}).

\begin{figure} [H]
     \centering
     \begin{subfigure}[t]{0.32\textwidth}
         \centering
         \includegraphics[width=\textwidth]{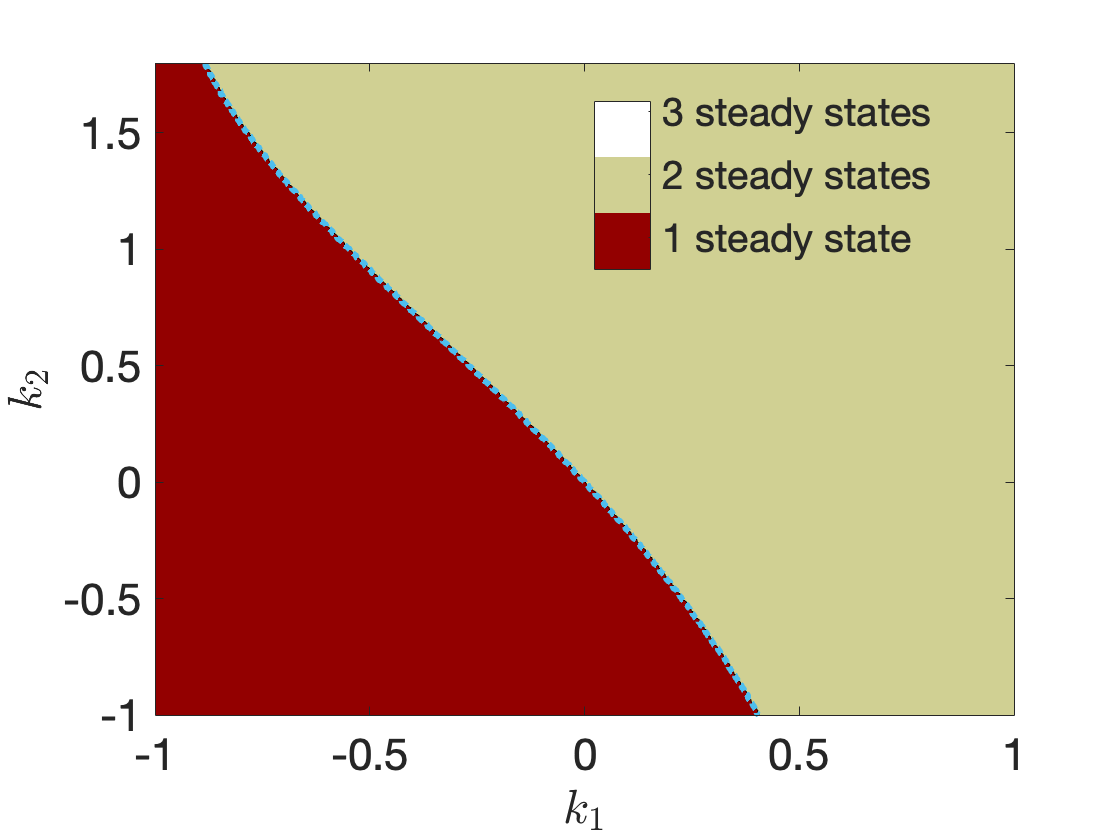}
         \caption{$k_0=1, P=1$}     \label{Wang_Figure11}
     \end{subfigure}
     \hfill
      \begin{subfigure}[t]{0.32\textwidth}
         \centering
         \includegraphics[width=\textwidth]{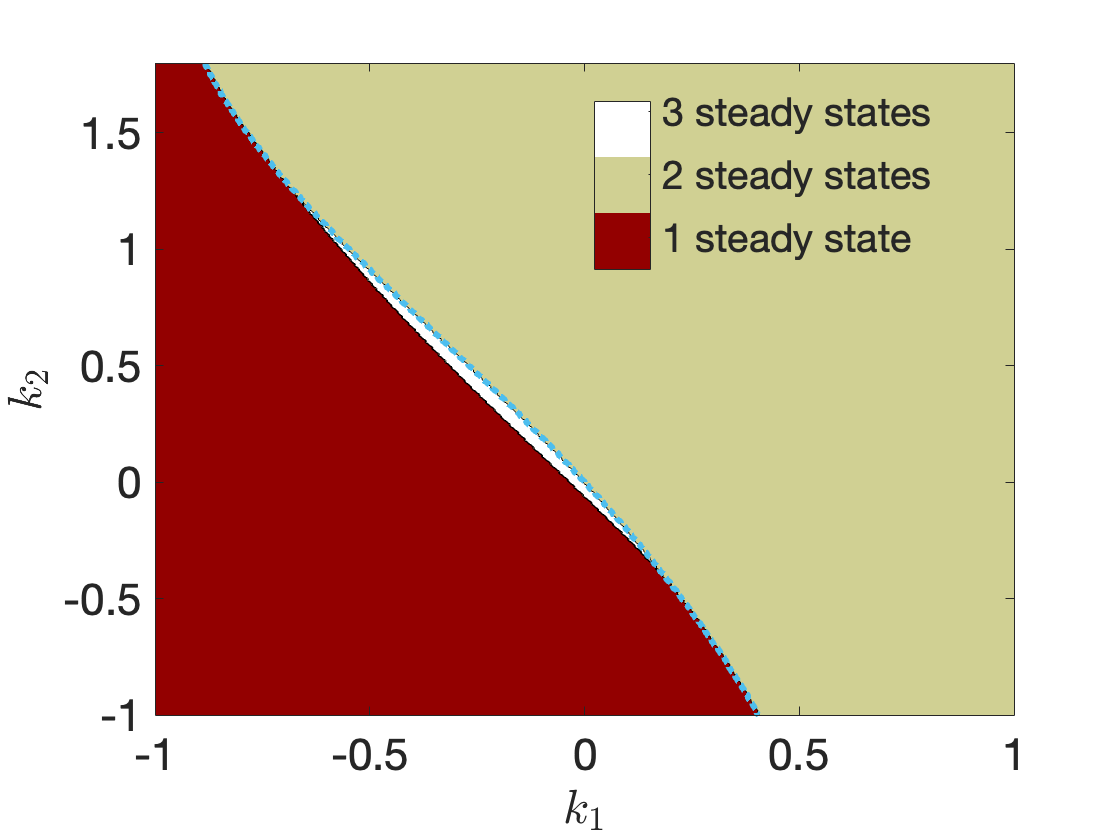}
         \caption{$k_0=2.5, P=1$}         \label{Wang_Figure12}
     \end{subfigure}
     \hfill
     \begin{subfigure}[t]{0.32\textwidth}
         \centering
         \includegraphics[width=\textwidth]{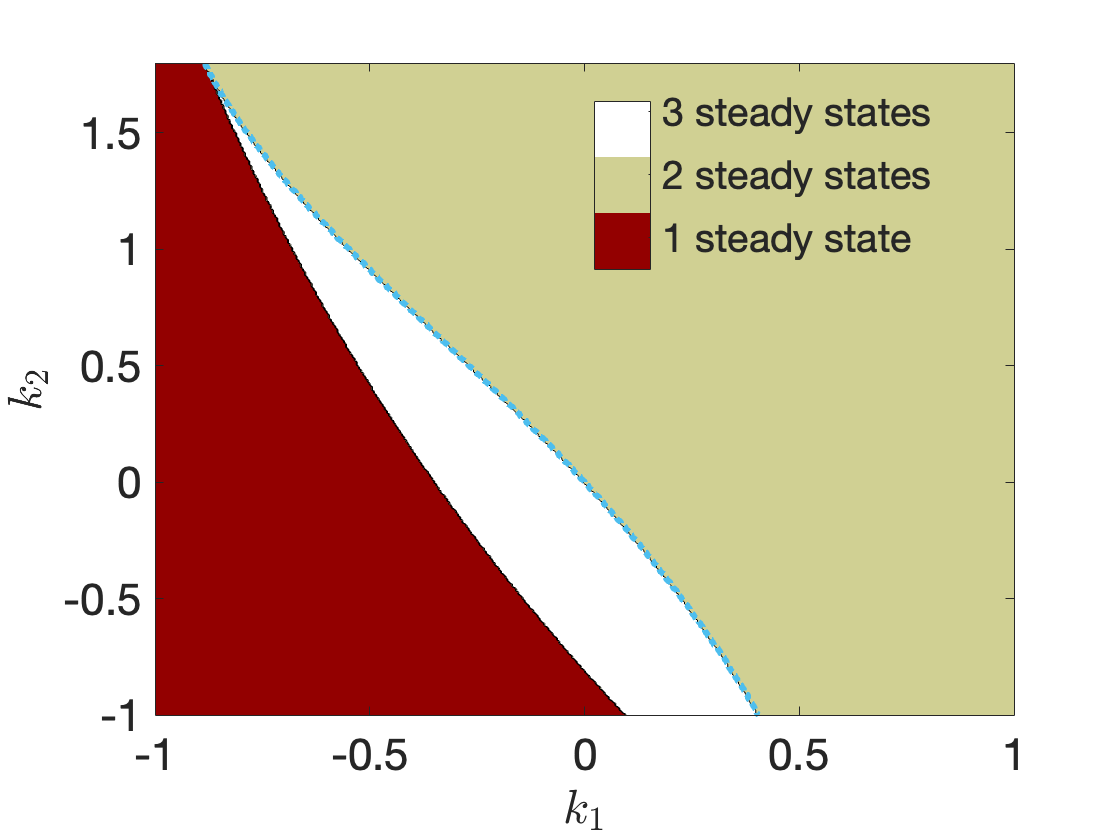}
         \caption{$k_0=4.5, P=1$}         \label{Wang_Figure13}
     \end{subfigure}
     
    \caption{The blue dotted line represents when $\lambda_2 = 0$, denoted by $k_2(k_1,1)$, which coincides with $k_2(k_1)$. When $k_2 > k_2(k_1,1)$, $\lambda_2 > 0$, and vice versa. For plot (a)-(c), $\omega=0.5, \alpha=1, \Gamma=1, \beta=1.$}
    \label{Wang_Figure14}
\end{figure}

When $P \neq 1$, the curve $k_2(k_1,P)$ does not necessarily coincide with $k_2(k_1)$ (see Figure \ref{Wang_Figure18}). In this more general case, we have up to six possible regions to analyze (see Figure \ref{Wang_Figure19}).
\begin{figure} [H]
     \centering
     \begin{subfigure}[t]{0.32\textwidth}
         \centering
         \includegraphics[width=\textwidth]{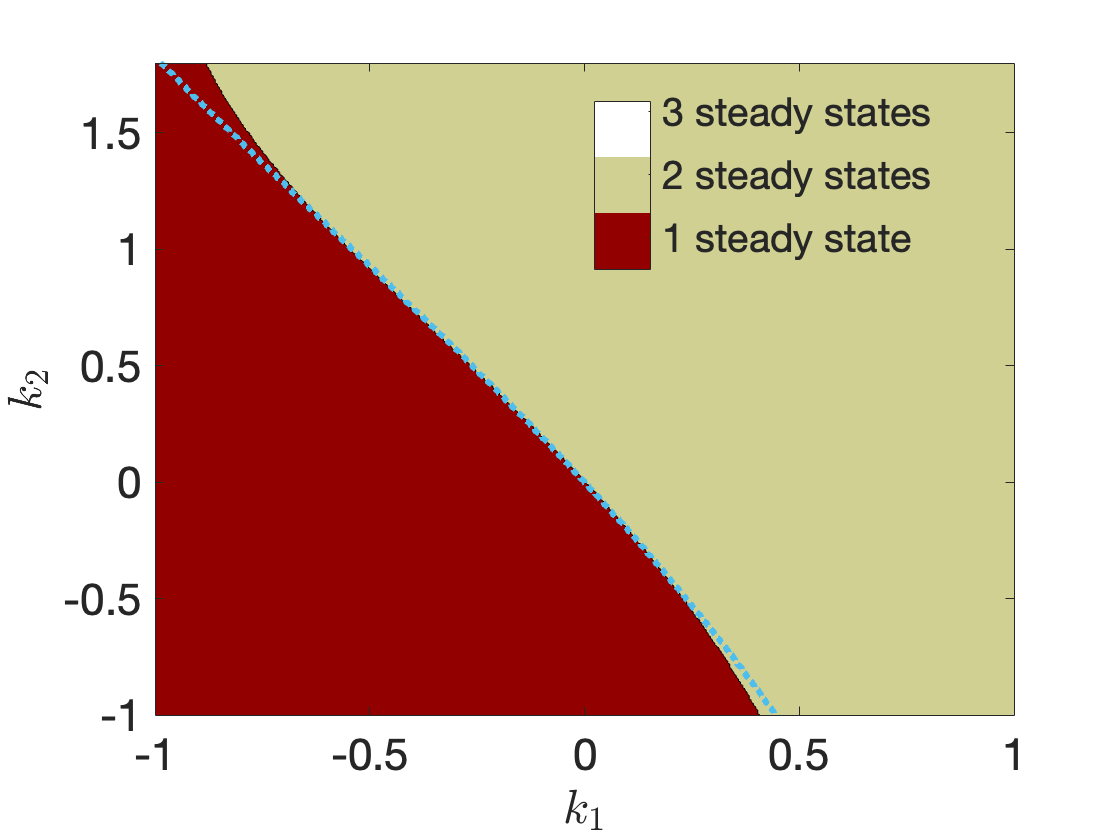}
         \caption{$k_0=1, P=0.5$}     \label{Wang_Figure15}
     \end{subfigure}
     \hfill
      \begin{subfigure}[t]{0.32\textwidth}
         \centering
         \includegraphics[width=\textwidth]{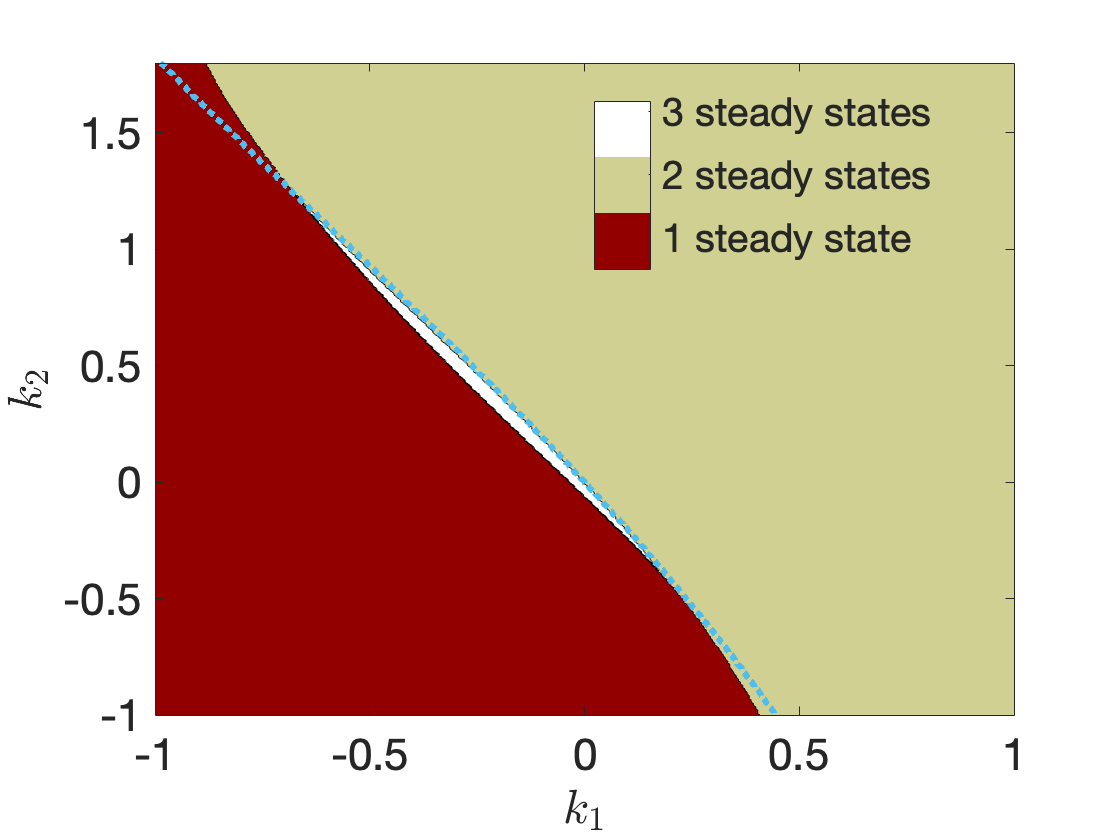}
         \caption{$k_0=2.5, P=0.5$}         \label{Wang_Figure16}
     \end{subfigure}
     \hfill
     \begin{subfigure}[t]{0.32\textwidth}
         \centering
         \includegraphics[width=\textwidth]{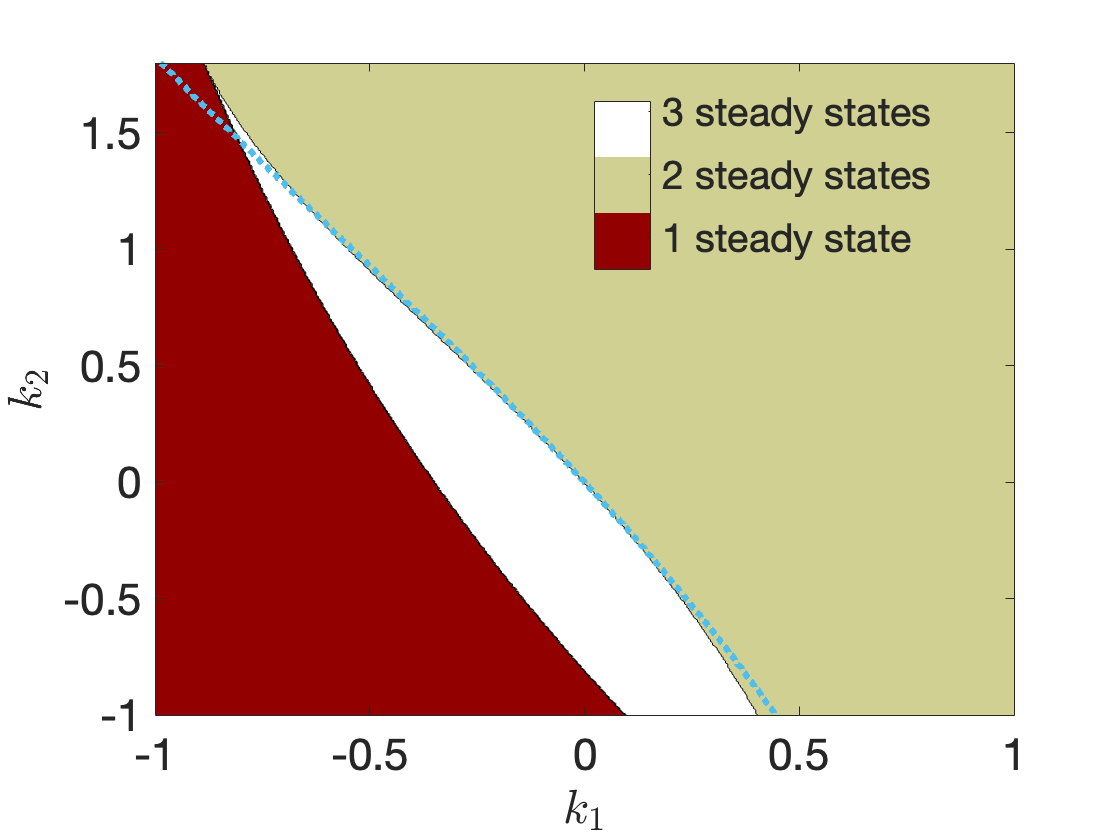}
         \caption{$k_0=4.5, P=0.5$}         \label{Wang_Figure17}
     \end{subfigure}
     
    \caption{The blue dotted line represents when $\lambda_2 = 0$, denoted by $k_2(k_1,0.5)$. When $k_2 > k_2(k_1,0.5)$, $\lambda_2 > 0$, and vice versa. For plot (a)-(c), $\omega=0.5, \alpha=1, \Gamma=1, \beta=1.$}
    \label{Wang_Figure18}
\end{figure}

\begin{figure} [H]
     \centering
        \includegraphics[scale=0.3]{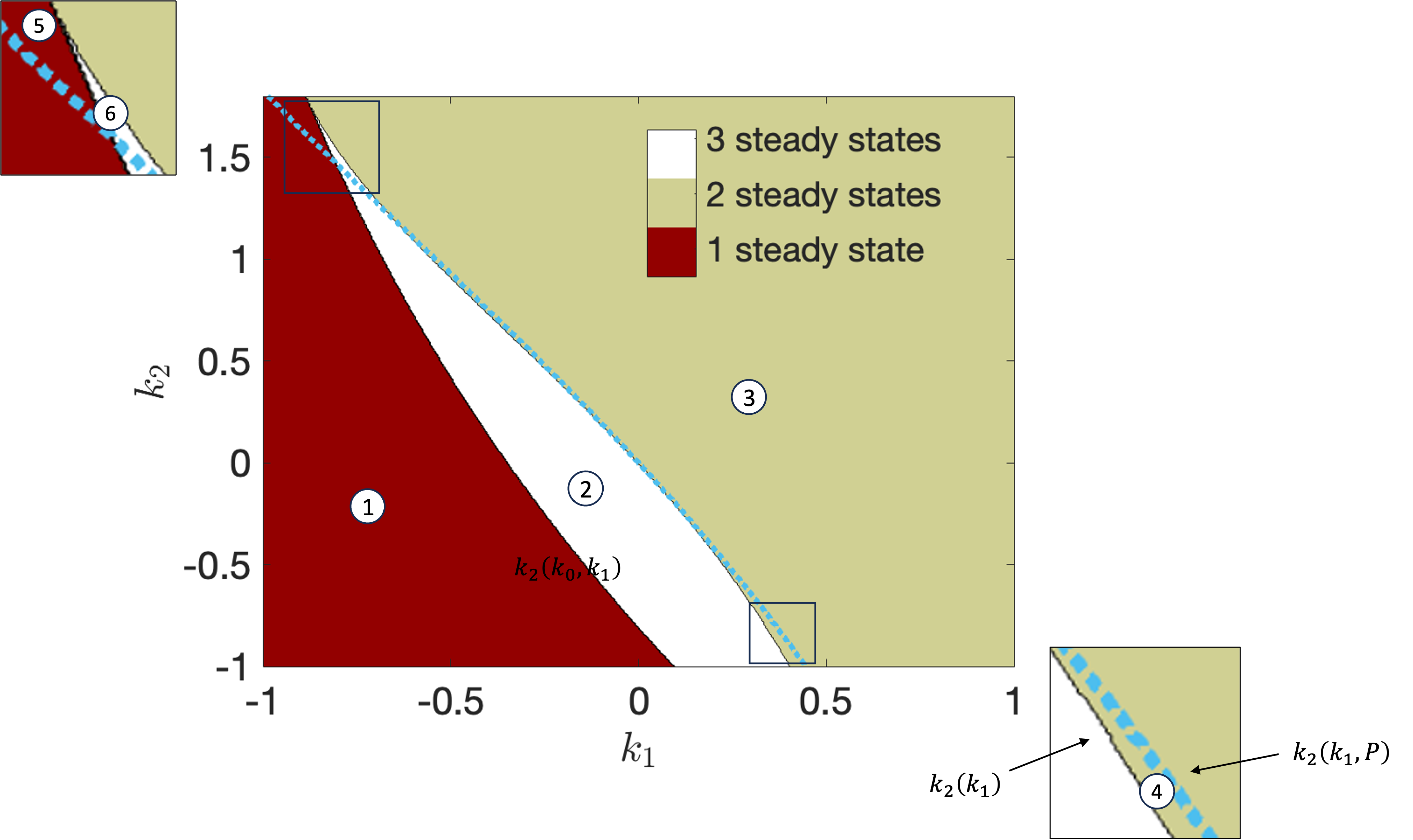}
        \caption{Six possible regions in the $k_1-k_2$ regime for $k_0 \ge 0$. 
        }
        \label{Wang_Figure19}
\end{figure}

Depending on the parameter set, regions five and six may be empty (see Figure \ref{Wang_Figure22}). 

\begin{figure} [H]
     \centering
     \begin{subfigure}[b]{0.45\textwidth}
         \centering
         \includegraphics[width=\textwidth]{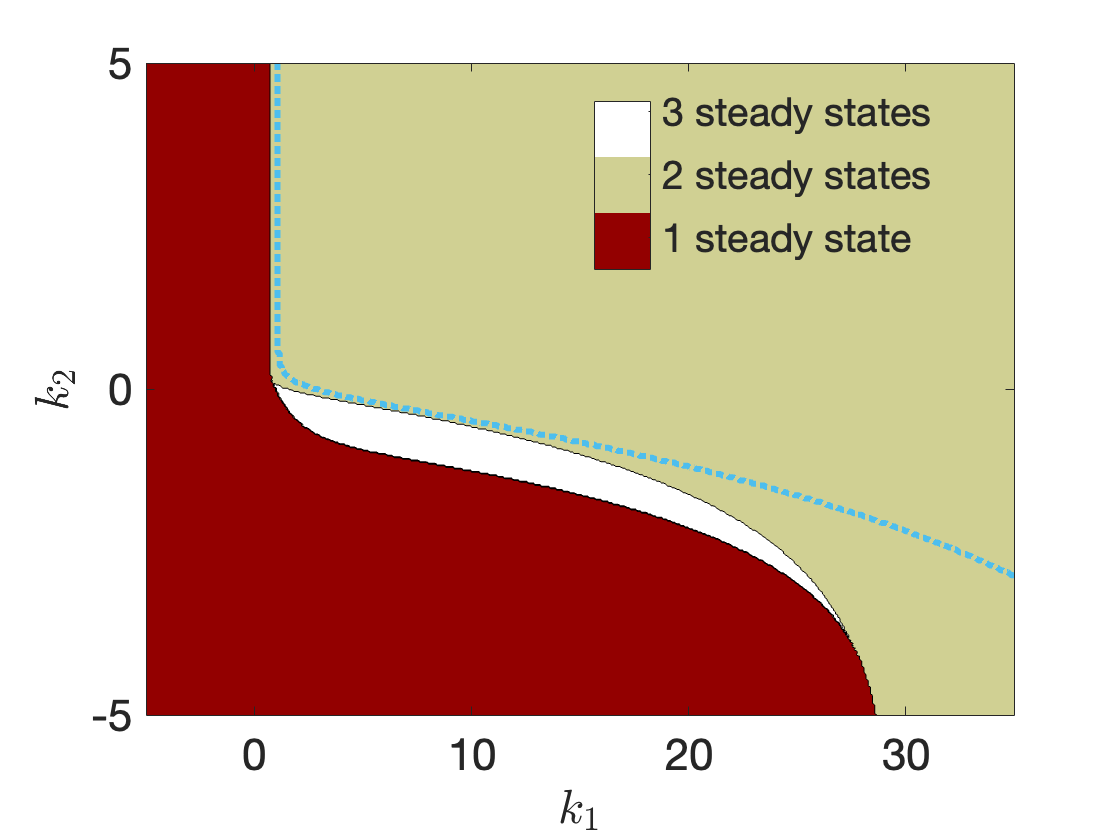}
         \caption{$k_0=1, P=0.5$}
         \label{Wang_Figure20}
     \end{subfigure}
     \hfill
     \begin{subfigure}[b]{0.45\textwidth}
         \centering
         \includegraphics[width=\textwidth]{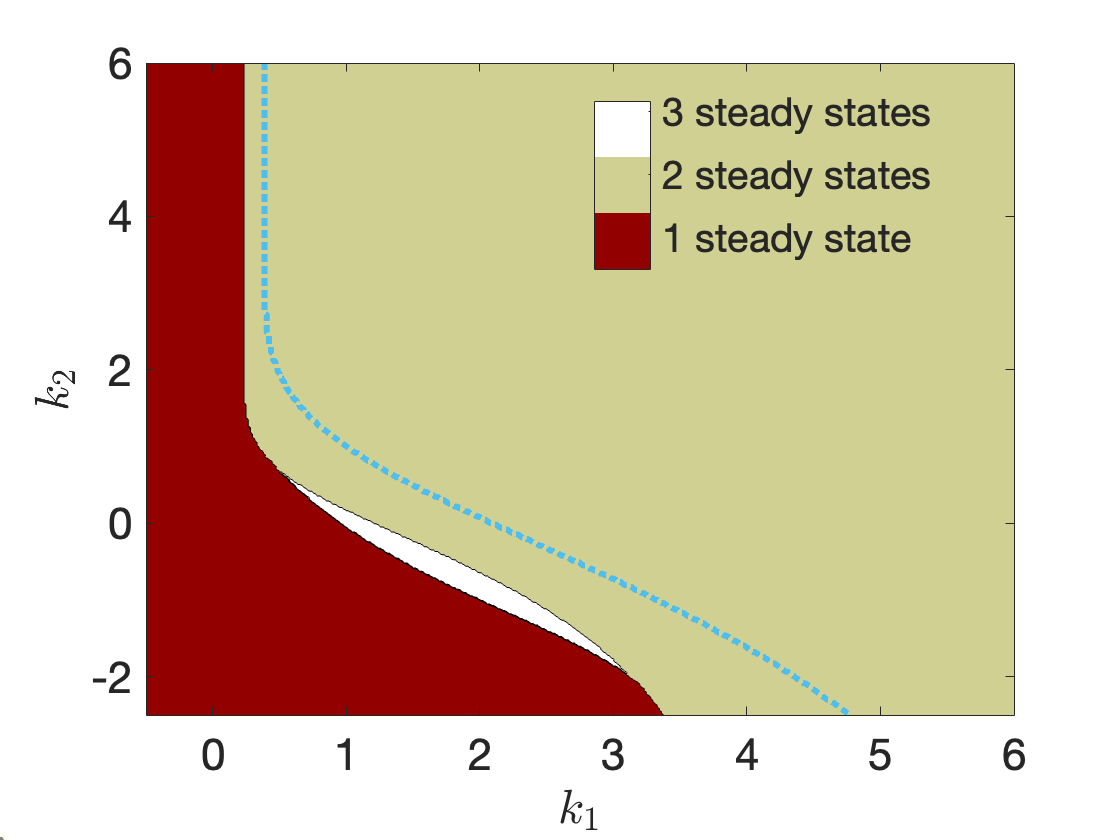}
         \caption{$k_0=2, P=0.5$}
         \label{Wang_Figure21}
     \end{subfigure}
     \hfill
        \caption{Possible parameter for which region 
        }
        \label{Wang_Figure22}
\end{figure}

The signs of eigenvalues at non-trivial steady states need to be checked separately for different values of $k_1, k_2$.

We now present the results for the existence of the traveling wave solutions.

\begin{theorem}[Monostable]
\label{thm:monostable-existence}
Let $k_0, k_1, k_2 \ge 0$. For $k_2 > k_2(k_0,k_1)$ and $k_2 > k_2(k_1,P)$, there exists a positive constant $c_*$ such that for all $c \ge c_*$, there exist monotone traveling wave solutions to ~\eqref{Wang_Equation12}. When $c < c_*$, such waves do not exist.
\end{theorem}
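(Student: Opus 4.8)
The plan is to exploit the cooperative (monotone) structure of \eqref{Wang_Equation3} established above and recast the problem as the construction of a monotone connecting orbit for the traveling-wave system \eqref{Wang_Equation12}, joining the stable nontrivial equilibrium $\hat{\bm u}_- = (u^*, v^*, 1)$ at $z\to-\infty$ to the unstable trivial equilibrium $\hat{\bm u}_+=(0,v_+,P_+)$ at $z\to+\infty$, where $v_+=(1+P_+)^{k_2}$. First I would record the monostable configuration forced by the hypotheses: $k_2 > k_2(k_0,k_1)$ places the parameters in the regime where a nontrivial steady state exists, while $k_2 > k_2(k_1,P)$ makes the eigenvalue $\lambda_2$ in \eqref{Wang_Equation9} positive, so $\hat{\bm u}_+$ is unstable; the condition \eqref{Wang_Equation11} (to be verified numerically) together with $f_u+g_v<0$ makes all three eigenvalues at $\hat{\bm u}_-$ negative, so that state is stable. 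This identifies the source as monostable in the sense introduced above.

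Next I would extract the threshold speed from the linearization at the unstable state. Since $f_v=f_P=0$ there, the first row of $DF(\hat{\bm u}_+)$ is $(\lambda_2,0,0)$, so the $u$-component of the linearized wave equation decouples to $\hat u'' + c\,\hat u' + \lambda_2 \hat u = 0$, whose characteristic roots $\tfrac12(c\pm\sqrt{c^2-4\lambda_2})$ are real and positive precisely when $c\ge 2\sqrt{\lambda_2}$. I would therefore set $c_* = 2\sqrt{\lambda_2}$. The non-existence half of the theorem is then immediate: for $c<c_*$ these roots are complex, so any nonzero solution of the $u$-equation oscillates about $0$ near $+\infty$, which is incompatible with a nonnegative monotone wave; hence no such wave exists below $c_*$.

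For $c\ge c_*$ I would prove existence by the upper-and-lower-solution / monotone-iteration method for cooperative reaction-diffusion systems, in the spirit of \cite{Wang_Volpert_1994}. Taking $\mu=\tfrac12(c-\sqrt{c^2-4\lambda_2})$, a super-solution is obtained by capping the decaying exponential $e^{-\mu z}$ at the upper-equilibrium values of $\hat{\bm u}_-$, and a sub-solution from a truncated profile such as $\max\{0,\,e^{-\mu z}-q\,e^{-\eta z}\}$ with $\eta>\mu$ and $q$ large (for $c=c_*$ the coincident root forces the modification $\max\{0,(1-qz)e^{-\mu z}\}$). One verifies the differential inequalities componentwise: the $u$-component uses the KPP-type concavity of $G(u)=\Gamma u(1-u)$, the $v$-component uses the explicit form of $g$ in \eqref{Wang_Equation4}, and the sign hypotheses $k_0,k_1,k_2\ge 0$ guarantee the off-diagonal monotonicity needed for the comparison principle. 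Monotone iteration between these ordered barriers then yields, for each $c\ge c_*$, a monotone profile with the prescribed limits.

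The principal obstacle is that \eqref{Wang_Equation3} is only partially parabolic: the $P$-equation carries no diffusion, so the abstract monotone-system theory of \cite{Wang_Volpert_1994} does not apply verbatim. I would circumvent this by slaving $P$ to $u$. In wave coordinates the third equation reads $c\,\hat P' = -u(1-P)$, which integrates to $1-P(z)=\big(1-P(z_0)\big)\exp\!\big(\tfrac1c\int_{z_0}^{z}u\big)$; this exhibits $P$ as a monotone, order-preserving functional of $u$ that automatically enforces $P\to 1$ as $z\to-\infty$ (the integral diverges since $u\to u^*>0$) and $P\to P_+<1$ as $z\to+\infty$. Substituting this relation reduces the problem to a nondegenerate cooperative system for $(u,v)$ with a monotone nonlocal coupling, to which the iteration above applies; alternatively one adds a vanishing viscosity $\varepsilon P_{xx}$, obtains waves from the nondegenerate theory, and passes to the limit $\varepsilon\to0^+$ using the uniform $C^1$ bounds that monotonicity provides. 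Checking that the nonlocal coupling preserves monotonicity of the iteration, and that the sub- and super-solution inequalities genuinely hold for the specific nonlinearities in \eqref{Wang_Equation4}, is where the bulk of the technical work will lie.
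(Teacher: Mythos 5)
Your identification of the monostable configuration is the same as the paper's, and your concern about the degenerate diffusion is legitimate and goes beyond the paper: the paper never addresses the fact that $A=\mathrm{diag}(1,1,0)$ is singular before invoking \cite{Wang_Volpert_1994}, whereas your slaving of $P$ to $u$ (correctly anchored at $z\to+\infty$, where it is order-preserving) or a vanishing-viscosity limit would be a genuine repair. But the paper's proof itself is very short: it verifies the eigenvalue signs at $\hat{\bm{u}}_+$, $\hat{\bm{u}}^1$, $\hat{\bm{u}}_-$ (partly numerically, via \eqref{Wang_Equation9} and the regions in the $k_1$--$k_2$ plane) and then cites Theorem 2.2 of the Introduction of \cite{Wang_Volpert_1994}, which for monotone monostable systems already delivers a minimal speed $c_*$ --- characterized by a minimax formula, \emph{not} by the linearization at the unstable state --- together with existence of monotone waves for all $c\ge c_*$ and non-existence below it.

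The genuine gap in your argument is the identification $c_*=2\sqrt{\lambda_2}$ (linear determinacy) and the supersolution meant to certify it. For a cooperative system, the supersolution inequality for the $u$-component must hold with the \emph{other components evaluated at their own supersolution values}: you need $\bar u''+c\,\bar u'+f(\bar u,\bar v,\bar P)\le 0$ with $\bar v,\bar P$ near $v^*,1$ behind the front. Since $k_1\ge 0$ and $r$ is increasing, the effective growth rate $(1+\bar P)^{k_1}r(\bar v)\Gamma-\omega$ there strictly exceeds $\lambda_2=f_u(\hat{\bm{u}}_+)$, so the capped exponential $e^{-\mu z}$ with $\mu$ taken from the linearization at $\hat{\bm{u}}_+$ is \emph{not} a supersolution; the KPP sub-tangency $f(u,v,P)\le\lambda_2 u$ fails exactly because tension and police presence enhance the growth rate behind the leading edge in this parameter regime. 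Consequently the minimal speed may be strictly larger than $2\sqrt{\lambda_2}$ (a pushed front), and your two halves no longer meet: the oscillation argument excludes monotone waves only for $c<2\sqrt{\lambda_2}$, while any repaired barrier construction (using the enhanced rate at $(v^*,1)$) yields existence only above a strictly larger threshold, leaving the intermediate speeds uncovered. That intermediate range is precisely what the minimax characterization of $c_*$ in \cite{Wang_Volpert_1994} handles; and since the theorem as stated does not require an explicit value of $c_*$, insisting on $c_*=2\sqrt{\lambda_2}$ commits you to a stronger claim than needed, and the proof of that stronger claim is the step that fails.
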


\begin{proof}
In the region where $k_0, k_1, k_2 \ge 0$, $k_2 > k_2(k_0,k_1)$ and $k_2 > k_2(k_1,P)$, there are either two or three steady states, corresponding to region \circled{3} or \circled{6}, respectively. We denote by $\hat{\bm{u}}_+, \hat{\bm{u}}^1, \hat{\bm{u}}_-$ the trivial, middle non-trivial, and largest non-trivial steady state, respectively, so that $\hat{\bm{u}}_+ < \hat{\bm{u}}^1 < \hat{\bm{u}}_-$. Since $k_2 > k_2(k_1, P)$, the second eigenvalue of $DF(\hat{\bm{u}}_+)$ given in \eqref{Wang_Equation9} lies in the right half-plane in this parameter regime, as shown in figure ~\eqref{Wang_Figure18} or ~\eqref{Wang_Figure14}. For the nontrivial steady states, one can check numerically that all three eigenvalues of $DF(\hat{\bm{u}}_-)$ are real and negative, and, if we are in region \circled{6}, that at least one eigenvalue of $DF(\hat{\bm{u}}^1)$ lies in the right half plane. Finally, by Theorem $2.2$ in Introduction of \cite{Wang_Volpert_1994}, we obtain the existence of waves. 
\end{proof}

\begin{theorem}[Bistable]
\label{thm:bistable-existence}
Let $k_0, k_1, k_2 \ge 0$. Additionally, let $k_0 > k_0^*$ for some $k_0^* \ge 0$ such that $k_2(k_1)$ and $k_2(k_0,k_1)$ do not coincide, guaranteeing a region with three steady states. For $k_2(k_0,k_1) < k_2 < k_2(k_1,P)$, there exists a monotonically decreasing traveling wave solution to ~\eqref{Wang_Equation12} with a unique wave speed $c$ if either there are two steady states, or there are three steady states and one of the two following conditions is satisfied:
\begin{enumerate}
    \item $k_2 \neq 0, u_1 \neq 1$,
    \item $k_2 = 0, k_1 \neq 0, u_1 \neq 1$,
\end{enumerate}
where $u_1$ is the $u$-value at the middle stationary point.
\end{theorem}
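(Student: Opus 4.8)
The plan is to mirror the proof of Theorem~\ref{thm:monostable-existence}, replacing the monostable existence result of \cite{Wang_Volpert_1994} by the corresponding existence-and-uniqueness theorem for bistable monotone systems, and to supply the extra verifications that distinguish the bistable regime. First I would pin down the parameter window: the hypothesis $k_2 > k_2(k_0,k_1)$ together with $k_0 > k_0^*$ and the non-coincidence of $k_2(k_1)$ and $k_2(k_0,k_1)$ places us in a regime with two or three steady states, while the new constraint $k_2 < k_2(k_1,P)$ drives the eigenvalue $\lambda_2$ of \eqref{Wang_Equation9} at the trivial state into the left half-plane. This is the crucial sign change relative to the monostable case: the trivial state $\hat{\bm{u}}_+$ is now stable rather than unstable. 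Ordering the equilibria as $\hat{\bm{u}}_+ < \hat{\bm{u}}^1 < \hat{\bm{u}}_-$, I would seek a monotone front connecting the two stable states $\hat{\bm{u}}_+$ and $\hat{\bm{u}}_-$.

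Next I would verify the bistable eigenvalue configuration at each equilibrium. At $\hat{\bm{u}}_-$ one checks, exactly as before, that condition \eqref{Wang_Equation11} holds so that $f_vg_u - f_ug_v<0$ and $f_u+g_v<0$, forcing all three eigenvalues in \eqref{Wang_Equation10} to be real and negative; at $\hat{\bm{u}}_+$ the eigenvalues $\lambda_2,\lambda_3$ are negative, while $\lambda_1=0$ is a neutral direction coming from the line of trivial equilibria $(0,(1+P)^{k_2},P)$ and the absence of diffusion in the $P$-equation. When three equilibria are present, the middle state $\hat{\bm{u}}^1$ must be shown to be genuinely unstable, i.e. to possess an eigenvalue in the right half-plane; from \eqref{Wang_Equation10} this amounts to reversing the inequality \eqref{Wang_Equation11} at $(u_1,v_1,1)$ so that the discriminant makes $\lambda_1>0$. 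The technical conditions (1) and (2) are precisely what guarantee this strict reversal together with the non-degeneracy of $\hat{\bm{u}}^1$: requiring $k_2\neq0$ (or $k_2=0$ with $k_1\neq0$) and $u_1\neq1$, a direct computation from \eqref{Wang_Equation6} and \eqref{Wang_Equation11} yields $f_vg_u-f_ug_v>0$ strictly and rules out a degenerate coalescence of the middle state with an endpoint.

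With the bistable structure in hand, I would invoke the existence-and-uniqueness theorem for monotone bistable systems from \cite{Wang_Volpert_1994} to produce a monotonically decreasing wave together with a unique admissible speed $c$; in the two-equilibrium case the same theorem applies directly, since there is no intermediate state to control. The main obstacle is the degeneracy introduced by the pure ODE $P_t=u(1-P)$: the diffusion matrix $A=\mathrm{diag}(1,1,0)$ is singular and the trivial equilibrium carries the zero eigenvalue $\lambda_1=0$, so the bistable theory does not apply verbatim. I expect to handle this by exploiting the monotone, boundary-anchored behavior of $P$ along the front—$P\to1$ as $z\to-\infty$ and $P\to P_+<1$ as $z\to+\infty$, with $P$ determined by the $(u,v)$ profile—so as to reduce to a genuinely bistable reaction–diffusion pair, or by appealing to an extension of the monotone-systems framework to partially degenerate (parabolic–ODE) systems. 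Verifying that this reduction preserves monotonicity and the stability signs, and that the neutral $P$-direction is benign for the uniqueness of the speed, is the delicate step; the remainder is the numerical check of \eqref{Wang_Equation11} and of its reversal at $\hat{\bm{u}}^1$, exactly as in Theorem~\ref{thm:monostable-existence}.
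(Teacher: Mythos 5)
Your overall skeleton is the same as the paper's: verify monotonicity, show the two end states have spectrum in the left half-plane (using $k_2 < k_2(k_1,P)$ to control $\lambda_2$ in \eqref{Wang_Equation9}, plus a numerical check at $\hat{\bm{u}}_-$), dispose of the two-steady-state region immediately, and conclude via the bistable existence--uniqueness theorem of \cite{Wang_Volpert_1994}. The genuine gap is in your treatment of the middle state $\hat{\bm{u}}^1$. You verify only spectral instability, i.e.\ that reversing \eqref{Wang_Equation11} (so $f_v g_u - f_u g_v > 0$ at $\hat{\bm{u}}^1$) pushes $\lambda_1$ in \eqref{Wang_Equation10} into the right half-plane. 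But the hypothesis the paper actually verifies for Theorem 3.2 of Chapter 3 of \cite{Wang_Volpert_1994} is the existence of a nonnegative row vector $\bm{q}_1$ with $\bm{q}_1\,DF(\hat{\bm{u}}^1) > 0$, which produces the system \eqref{Wang_Equation15}. For an irreducible matrix with nonnegative off-diagonal entries those two statements would be equivalent by Perron--Frobenius, but $DF(\hat{\bm{u}}^1)$ is reducible: since $P_1 = 1$, its third row is $(0,0,-u_1)$. The third component of $\bm{q}_1\,DF(\hat{\bm{u}}^1)$ is then $q_1 f_P + q_2 g_P - q_3 u_1$, and when $k_1 = k_2 = 0$ both $f_P$ and $g_P$ vanish identically, so no admissible $\bm{q}_1$ exists \emph{no matter what the eigenvalues of the $(u,v)$ block are}. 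This is precisely where the theorem's hypotheses ``$k_2 \neq 0$'' and ``$k_2 = 0,\ k_1 \neq 0$'' come from; they are invisible to your eigenvalue criterion, so your reading misattributes them (only $u_1 \neq 1$ interacts with the spectrum, through $f_v, f_P \propto G(u_1)$). Moreover, the implication you assert runs the wrong way: conditions (1)--(2) do not yield $f_v g_u - f_u g_v > 0$; that inequality is a separate requirement which the paper checks numerically at $\hat{\bm{u}}^1$. As written, your argument does not verify the hypothesis of the theorem you invoke and cannot reproduce the case distinction in the statement.

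Separately, your concern about the degenerate diffusion matrix $A = \mathrm{diag}(1,1,0)$ and the zero eigenvalue at the trivial state (coming from the hyperplane of trivial equilibria) is well founded; indeed the paper's own proof elides it by asserting that the ``first and third eigenvalue'' of $DF(\hat{\bm{u}}_+)$ are negative, whereas its Equation \eqref{Wang_Equation9} gives $\lambda_1 = 0$. On this point you are more careful than the paper. However, you only gesture at a remedy (reduction to a genuinely bistable $(u,v)$ pair, or an extension of the monotone framework to parabolic--ODE systems) without carrying it out, so this part of your proposal remains a program rather than a proof, and it does not repair the middle-state gap described above.
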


\begin{proof}
In the region where $k_0, k_1, k_2 \ge 0$ and $k_2(k_0,k_1) < k_2 < k_2(k_1,P)$, there are either two or three steady states, corresponding to region \circled{4} or \circled{2}, respectively. Same as in the proof above, we denote the three steady states by $\hat{\bm{u}}_+, \hat{\bm{u}}^1, \hat{\bm{u}}_-$. We want to prove that all the eigenvalues of the matrices $DF(\hat{\bm{u}}_+)$ and $DF(\hat{\bm{u}}_-)$ lie in the left half-plane. For the trivial steady state, recall that the first and third eigenvalue of $DF(\hat{\bm{u}}_+)$  given in \eqref{Wang_Equation9} are both negative. Since $k_2 < k_2(k_1,P)$, the second eigenvalue of $DF(\hat{\bm{u}}_+)$ lies in the left half-plane, as shown in figure ~\eqref{Wang_Figure18} or ~\eqref{Wang_Figure14}. For the nontrivial steady state $\hat{\bm{u}}_-$, one can check numerically that all three eigenvalues of $DF(\hat{\bm{u}}_-)$ are real and negative. If we are in region \circled{4}, we are done. If we are in region \circled{2}, then it remains to show that there exists a non-negative vector $\bm{q}_1 = 
\begin{bmatrix}
    q_1 & q_2 & q_3
\end{bmatrix}$ such that $\bm{q}_1 DF(\hat{\bm{u}}^1) > 0$. If $\hat{\bm{u}}^1 =
\begin{bmatrix}
    u_1 & v_1 & P_1
\end{bmatrix}$, then the inequality can be written as
\begin{align}
\label{Wang_Equation15}
\begin{cases}
    -q_1 |f_u| + q_2 |g_u| > 0, \\
    q_1 |f_v| - q_2 |g_v| > 0, \\
    q_1 |f_P| + q_2 |g_P| - q_3 |s_P| > 0,
\end{cases}
\end{align}
where we have used the fact that $s_u(\hat{\bm{u}}^1) = 0$ and $ f_u(\hat{\bm{u}}^1), g_v(\hat{\bm{u}}^1), s_P(\hat{\bm{u}}^1) < 0$. Note that if $k_0 = 0$ then there are only two stationary points, so we only need to consider the case where $k_0 \neq 0$, which guarantees $g_u(\hat{\bm{u}}^1) \neq 0$.  Since $k_2 = 0$ implies and $g_P(\hat{\bm{u}}^1) = 0$, we want to discuss by cases depending on whether $k_2$ is zero or not. Note that $g_v(\hat{\bm{u}}^1) \neq 0$ for any parameter values.

Case 1: if $k_2 > 0$, then inequality ~\eqref{Wang_Equation15} can be further written as
\begin{align*}
\begin{cases}
    q_2 > \cfrac{q_1 |f_u|}{|g_u|}, \vspace{3pt}\\
    q_2 < \cfrac{q_1 |f_v|}{|g_v|}, \vspace{3pt}\\
    q_2 > -\cfrac{q_1 |f_P| - q_3 |s_P|}{|g_P|}.
\end{cases}
\end{align*}
Such $\bm{q}_1$ exists if there are $q_1, q_3$ such that
\begin{align*}
\begin{cases}
    \cfrac{q_1 |f_v|}{|g_v|} > \cfrac{q_1 |f_u|}{|g_u|}, \vspace{3pt}\\
    \cfrac{q_1 |f_v|}{|g_v|} > -\cfrac{q_1 |f_P| - q_3 |s_P|}{|g_P|},
\end{cases}
\end{align*}
which leads to
\begin{align*}
\begin{cases}
    q_1 \left (\cfrac{|f_v|}{|g_v|} - \cfrac{|f_u|}{|g_u|} \right) > 0, \vspace{3pt}\\
    q_3 < q_1 \cfrac{|g_P|}{|s_P|} \left (\cfrac{|f_v|}{|g_v|} + \cfrac{|f_P|}{|g_P|} \right).
\end{cases}
\end{align*}
Note that such $q_1, q_3$ exist only if
\begin{align*}
\begin{cases}
    \cfrac{|f_v|}{|g_v|} - \cfrac{|f_u|}{|g_u|} > 0, \vspace{3pt}\\
    \cfrac{|g_P|}{|s_P|} \left (\cfrac{|f_v|}{|g_v|} + \cfrac{|f_P|}{|g_P|} \right) > 0,
\end{cases}
\end{align*}
yielding the conditions
\begin{align*}
\begin{cases}
    f_v g_u - f_u g_v > 0, \\
    u_1 \neq 1. 
\end{cases}
\end{align*}

Case 2: if $k_2 = 0$, then a similar derivation as above yields the conditions
\begin{align*}
\begin{cases}
    f_v g_u - f_u g_v > 0, \\
    k_1 \neq 0, u_1 \neq 1. \\
\end{cases}
\end{align*}

One can check that the above strict inequality for both cases is satisfied at $\hat{\bm{u}} = \hat{\bm{u}}^1$.

Therefore, we obtain the existence and uniqueness of monotone traveling waves by Theorem $3.2$ in Chapter $3$ of \cite{Wang_Volpert_1994}. 
\end{proof}

In Table \ref{tab:source-types-in-6-regions} we present a summary of different regions in the $k_1-k_2$ regime with the according numbers of steady states and source types. The regions correspond to the ones labeled in Figure \ref{Wang_Figure19}.

\begin{table} [H]
\begin{center}
\caption{Six regions and their source types.\label{tab:source-types-in-6-regions}}
\begin{tabular}{ ||c|c|c|c|| } 
 \hline
 Label & Region & Number of steady states & Source type \\ 
 \hline\hline
 \circled{1} & $k_2<k_2(k_0,k_1), k_2<k_2(k_1,P)$ & 1 & near-pulse \\ 
 \hline
 \circled{2} & $k_2(k_0,k_1)<k_2<k_2(k_1), k_2<k_2(k_1,P)$ & 3 & bistable \\ 
 \hline
 \circled{3} & $k_2>k_2(k_1), k_2>k_2(k_1,P)$ & 2 & monostable \\
 \hline
 \circled{4} & $k_2(k_1)<k_2<k_2(k_1,P)$ & 2 & bistable \\
 \hline
 \circled{5} & $k_2(k_1,P)<k_2<k_2(k_0,k_1)$ & 1 & near-pulse \\
 \hline
 \circled{6} & $k_2(k_0,k_1)<k_2<k_2(k_1), k_2>k_2(k_1,P)$ & 3 & monostable \\
 \hline
\end{tabular}
\end{center}
\end{table}
Figure \ref{Wang_Figure25} illustrates traveling wave profiles in the regimes discussed in Theorem \ref{thm:monostable-existence} and \ref{thm:bistable-existence}.
\begin{figure} [H]
     \centering
     \begin{subfigure}[t]{0.48\textwidth}
         \centering
         \includegraphics[width=\textwidth]{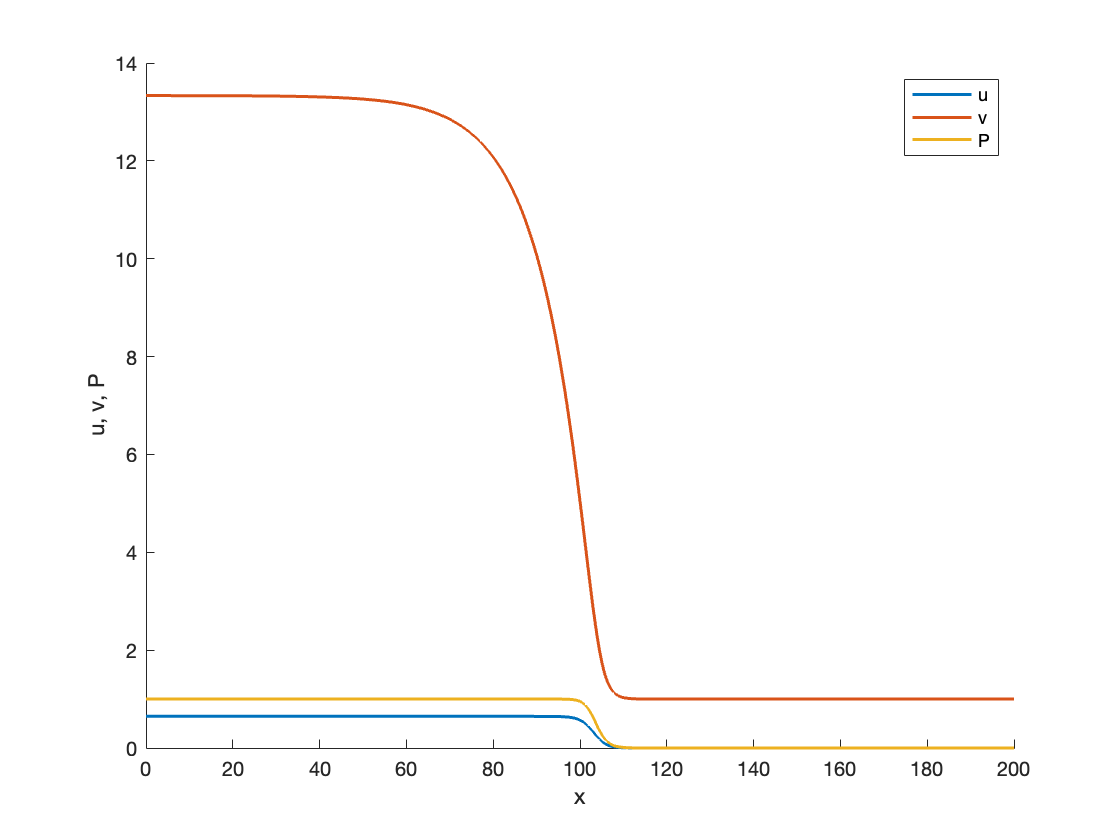}
         \caption{The monostable region with $k_2=0.5$ and $\Gamma=1$.}    \label{Wang_Figure23}
     \end{subfigure}
     \hfill
     \begin{subfigure}[t]{0.48\textwidth}
         \centering
         \includegraphics[width=\textwidth]{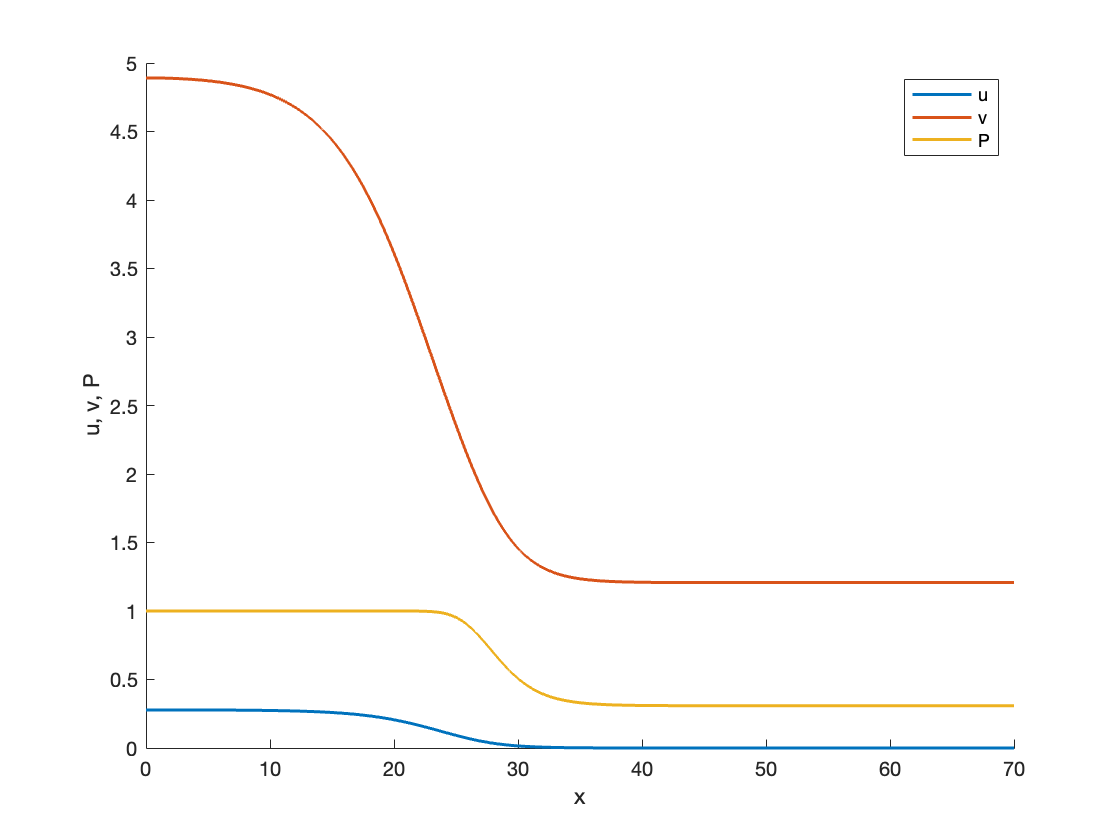}
         \caption{The bistable region with $k_2=0.7$ and $\Gamma=0.5$.}  \label{Wang_Figure24}
     \end{subfigure}
     
    \caption{Examples of traveling wave profiles with $k_0=4.5, k_1=0.5, \alpha=1, \beta=1,$ and $\omega=0.5.$}
    \label{Wang_Figure25}
\end{figure}

\subsection{Stability and asymptotic decay rates of traveling wave solutions}
\label{sec:Stability and asymptotic decay rates}
In this section, we explore the stability of the traveling wave solutions whose existence is proved in the previous section. We want to determine the types of perturbations allowed to obtain stable waves in different parameter regimes.

\subsubsection{Asymptotic decay rates}
\label{sec:Asymptotic rates}
We first present some preliminary results on the asymptotic decay rates of the waves, which will assist us in proving the stability of the waves in the monostable region. These rates show that the traveling waves and their derivatives respectively approach the steady states and zero exponentially fast as $|z| \to \infty$. Exact proofs for the following propositions can be found in Appendix \ref{sec:appendix-asymptotic-rates}.

\begin{proposition} [Asymptotic approach rates for $k_0 \ge 0$] 
\label{prop:asymp-decay-rates-triv}
Let $\hat{\bm{u}}(z) = 
\begin{bmatrix}
    \hat{u}(z) \\
    \hat{v}(z) \\
    \hat{P}(z)
\end{bmatrix}$ 
be the traveling wave solution to ~\eqref{Wang_Equation3} that satisfies
\begin{align*}
\begin{cases}
    \lim_{z \to +\infty} \hat{\bm{u}}(z) = \hat{\bm{u}}_+, \quad
    \lim_{z \to -\infty} \hat{\bm{u}}(z) = \hat{\bm{u}}_-, \\
    \hat{u}(z), \hat{v}(z) \ge 0, \quad 0 \le \hat{P}(z) \le 1, \\
    \hat{u}'(z), \hat{v}'(z) < 0,
\end{cases}
\end{align*}
where
$\hat{\bm{u}}_+ = 
\begin{bmatrix}
    0 \\
    (1+P)^{-k_2} \\
    P
\end{bmatrix},
\hat{\bm{u}}_- = 
\begin{bmatrix}
    u^* \\
    v^* \\
    1
\end{bmatrix}$.

Define $\mu_{1,2} = -(f_u(\hat{\bm{u}}_-) + g_v(\hat{\bm{u}}_-)) \pm \sqrt{(f_u(\hat{\bm{u}}_-) + g_v(\hat{\bm{u}}_-))^2 + 4f_v(\hat{\bm{u}}_-) g_u(\hat{\bm{u}}_-) - 4f_u(\hat{\bm{u}}_-) g_v(\hat{\bm{u}}_-)},$ where $\mu_1, \mu_2$ take the plus, minus sign, respectively.

(a) There are nonnegative vectors $\bm{k}_c, \bm{K}_c, \bm{k}_{\epsilon}, \bm{K}_{\epsilon}, \bm{m}_c, \bm{M}_c, \bm{m}_{\epsilon}, \bm{M}_{\epsilon}$, negative numbers $\lambda_5, \lambda_3, \lambda_2$, and positive numbers $\epsilon, \Tilde{\lambda}_1, \Tilde{\lambda}_2$ such that the nonzero elements of $\bm{K}_{\epsilon}, \bm{M}_{\epsilon}$ go to $\infty$ as $\epsilon \to 0$, and
\begin{align*}
    \begin{cases}
    \bm{k}_c e^{\lambda_5 z} \le \hat{\bm{u}}(z) - \hat{\bm{u}}_+ \le \bm{K}_c e^{\lambda_5 z},
    & (z \ge 0), f_u(\hat{\bm{u}}_+) < g_v(\hat{\bm{u}}_+), \\
    \bm{k}_{\epsilon} e^{(\lambda_5 - \epsilon)z} \le \hat{\bm{u}}(z) - \hat{\bm{u}}_+ \le \bm{K}_{\epsilon} e^{(\lambda_5 + \epsilon)z},
    & (z \ge 0), f_u(\hat{\bm{u}}_+) = g_v(\hat{\bm{u}}_+), \\
    \bm{k}_c e^{\lambda_3 z} \le \hat{\bm{u}}(z) - \hat{\bm{u}}_+ \le \bm{K}_c e^{\lambda_3 z},
    & (z \ge 0), g_v(\hat{\bm{u}}_+) < f_u(\hat{\bm{u}}_+) < 0, \\
    \bm{k}_c e^{\lambda_2 z} \le \hat{\bm{u}}(z) - \hat{\bm{u}}_+ \le \bm{K}_c e^{\lambda_2 z},
    & (z \ge 0), f_u(\hat{\bm{u}}_+) > 0, c > 2\sqrt{f_u(\hat{\bm{u}}_+)}, \\
    \bm{k}_{\epsilon} e^{(\lambda_2 - \epsilon)z} \le \hat{\bm{u}}(z) - \hat{\bm{u}}_+ \le \bm{K}_{\epsilon} e^{(\lambda_2 + \epsilon)z},
    & (z \ge 0), f_u(\hat{\bm{u}}_+) > 0, c = 2\sqrt{f_u(\hat{\bm{u}}_+)}, \\
    \bm{m}_c e^{\Tilde{\lambda}_1 z} \le \hat{\bm{u}}_- - \hat{\bm{u}}(z) \le \bm{M}_c e^{\Tilde{\lambda}_1 z},
    & (z \le 0), \frac{u^*}{c} < \frac{-c + \sqrt{c^2 + 2\mu_2}}{2}, \\
    \bm{m_{\epsilon}}e^{(\Tilde{\lambda}_1 + \epsilon) z} \le \hat{\bm{u}}_- - \hat{\bm{u}}(z) \le \bm{M_{\epsilon}}e^{(\Tilde{\lambda}_1 - \epsilon) z}
    & (z \le 0), \frac{u^*}{c} = \frac{-c + \sqrt{c^2 + 2\mu_2}}{2}, \\
    \bm{m_{\epsilon}}e^{(\Tilde{\lambda}_2 + \epsilon) z} \le \hat{\bm{u}}_- - \hat{\bm{u}}(z) \le \bm{M_{\epsilon}}e^{(\Tilde{\lambda}_2 - \epsilon) z}
    & (z \le 0), \frac{-c + \sqrt{c^2 + 2\mu_2}}{2} < \frac{u^*}{c}, \mu_1 = \mu_2, \\
    \bm{m}_c e^{\Tilde{\lambda}_2 z} \le \hat{\bm{u}}_- - \hat{\bm{u}}(z) \le \bm{M}_c e^{\Tilde{\lambda}_2 z},
    & (z \le 0),  \frac{-c + \sqrt{c^2 + 2\mu_2}}{2} < \frac{u^*}{c}, \mu_1 \neq \mu_2.
\end{cases}
\end{align*}

(b) There are nonpositive vectors $\bm{h}_c, \bm{H}_c, \bm{h}_{\epsilon}, \bm{H}_{\epsilon}$, nonnegative vectors $\bm{n}_c, \bm{N}_c, \bm{n}_{\epsilon}, \bm{N}_{\epsilon}$, negative numbers $\lambda_5, \lambda_3, \lambda_2$, and positive numbers $\epsilon, \Tilde{\lambda}_1, \Tilde{\lambda}_2$ such that the nonzero elements of $\bm{h}_{\epsilon}$ and $\bm{N}_{\epsilon}$ respectively go to $-\infty$ and $\infty$ as $\epsilon \to 0$, and
\begin{align*}
    \begin{cases}
    \bm{h}_c e^{\lambda_5 z} \le \hat{\bm{u}}'(z) \le \bm{H}_c e^{\lambda_5 z},
    & (z \ge 0), f_u(\hat{\bm{u}}_+) < g_v(\hat{\bm{u}}_+), \\
    \bm{h}_{\epsilon} e^{(\lambda_5 + \epsilon)z} \le \hat{\bm{u}}'(z) \le \bm{H}_{\epsilon} e^{(\lambda_5 - \epsilon)z},
    & (z \ge 0), f_u(\hat{\bm{u}}_+) = g_v(\hat{\bm{u}}_+), \\
    \bm{h}_c e^{\lambda_3 z} \le \hat{\bm{u}}'(z) \le \bm{H}_c e^{\lambda_3 z},
    & (z \ge 0), g_v(\hat{\bm{u}}_+) < f_u(\hat{\bm{u}}_+) < 0, \\
    \bm{h}_c e^{\lambda_2 z} \le \hat{\bm{u}}'(z) \le \bm{H}_c e^{\lambda_2 z},
    & (z \ge 0), f_u(\hat{\bm{u}}_+) > 0, c > 2\sqrt{f_u(\hat{\bm{u}}_+)}, \\
    \bm{h}_{\epsilon} e^{(\lambda_2 + \epsilon)z} \le \hat{\bm{u}}'(z) \le \bm{H}_{\epsilon} e^{(\lambda_2 - \epsilon)z},
    & (z \ge 0), f_u(\hat{\bm{u}}_+) > 0, c = 2\sqrt{f_u(\hat{\bm{u}}_+)}, \\
    \bm{n}_c e^{\Tilde{\lambda}_1 z} \le - \hat{\bm{u}}'(z) \le \bm{N}_c e^{\Tilde{\lambda}_1 z},
    & (z \le 0), \frac{u^*}{c} < \frac{-c + \sqrt{c^2 + 2\mu_2}}{2}, \\
    \bm{n_{\epsilon}}e^{(\Tilde{\lambda}_1 + \epsilon) z} \le - \hat{\bm{u}}'(z) \le \bm{N_{\epsilon}}e^{(\Tilde{\lambda}_1 - \epsilon) z}
    & (z \le 0), \frac{u^*}{c} = \frac{-c + \sqrt{c^2 + 2\mu_2}}{2}, \\
    \bm{n_{\epsilon}}e^{(\Tilde{\lambda}_2 + \epsilon) z} \le - \hat{\bm{u}}'(z) \le \bm{N_{\epsilon}}e^{(\Tilde{\lambda}_2 - \epsilon) z}
    & (z \le 0), \frac{-c + \sqrt{c^2 + 2\mu_2}}{2} < \frac{u^*}{c}, \mu_1 = \mu_2, \\
    \bm{n}_c e^{\Tilde{\lambda}_2 z} \le - \hat{\bm{u}}'(z) \le \bm{N}_c e^{\Tilde{\lambda}_2 z},
    & (z \le 0),  \frac{-c + \sqrt{c^2 + 2\mu_2}}{2} < \frac{u^*}{c}, \mu_1 \neq \mu_2.
\end{cases}
\end{align*}

\end{proposition}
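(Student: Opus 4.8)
The plan is to reduce the two-sided exponential estimates to a linearized analysis of the traveling-wave ODE system near each limiting state $\hat{\bm{u}}_\pm$, exploiting the block structure of $DF$ there. Writing $\bm{w} = \hat{\bm{u}} - \hat{\bm{u}}_+$ near $z=+\infty$ (resp. $\bm{w}=\hat{\bm{u}}-\hat{\bm{u}}_-$ near $z=-\infty$) and substituting into $A\hat{\bm{u}}''+c\hat{\bm{u}}'+F(\hat{\bm{u}})=0$ gives $A\bm{w}''+c\bm{w}'+DF(\hat{\bm{u}}_\pm)\bm{w}=N(\bm{w})$ with $N(\bm{w})=O(|\bm{w}|^2)$. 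First I would record the two structural simplifications that make this tractable. At the trivial state one has $f_v=f_P=s_P=s_v=0$ (because $G(0)=0$ and $u=0$), so the $\hat u$–equation decouples and $\hat P$ is slaved to $\hat u$ via $c\hat P'=-(1-P)\hat u$. At the nontrivial state $s_u=1-P=0$ (since $P^*=1$) and $s_v=0$, so the $\hat P$–equation decouples, giving $c\hat P'=\hat u(\hat P-1)$ with rate $u^*/c$, while $(\hat u,\hat v)$ forms a coupled cooperative pair.

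Second, I would extract the spatial eigenvalues from the dispersion relation $\det(\lambda^2 A+c\lambda I+DF(\hat{\bm{u}}_\pm))=0$. At $\hat{\bm{u}}_+$ this factors as $(\lambda^2+c\lambda+f_u)(\lambda^2+c\lambda+g_v)(c\lambda)=0$, producing candidate rates $\tfrac{-c\pm\sqrt{c^2-4f_u}}{2}$ and $\tfrac{-c\pm\sqrt{c^2-4g_v}}{2}$; since $g_v(\hat{\bm{u}}_+)<0$ always, the relevant $v$-root is $\lambda_5=\tfrac{-c-\sqrt{c^2-4g_v}}{2}$, while the relevant $u$-root is $\lambda_2=\tfrac{-c+\sqrt{c^2-4f_u}}{2}$ when $f_u>0$ (needing $c\ge 2\sqrt{f_u}$ for reality) and $\lambda_3=\tfrac{-c-\sqrt{c^2-4f_u}}{2}$ when $f_u<0$. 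The overall rate is the least negative of the two dominant candidates, which is precisely how the subcases $f_u<g_v$, $g_v<f_u<0$, $f_u>0$ partition. At $\hat{\bm{u}}_-$ the factorization is $(c\lambda-u^*)\big[(\lambda^2+c\lambda+f_u)(\lambda^2+c\lambda+g_v)-f_vg_u\big]=0$; the first factor yields $\tilde\lambda_1=u^*/c$, and setting $m=\lambda^2+c\lambda$ in the second gives $m=\mu_{1,2}/2$, hence the positive roots $\tfrac{-c+\sqrt{c^2+2\mu_{1,2}}}{2}$ with $\tilde\lambda_2=\tfrac{-c+\sqrt{c^2+2\mu_2}}{2}$. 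The slowest-decaying of $\tilde\lambda_1,\tilde\lambda_2$ governs the approach, reproducing the split on $\tfrac{u^*}{c}\lessgtr\tilde\lambda_2$.

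Third, I would convert this dispersion data into genuine two-sided bounds. The upper bounds follow from exponential supersolutions $\bm{C}e^{\lambda z}$ with $\lambda$ the dominant rate together with the comparison principle, which is available because the system is cooperative for $k_0,k_1,k_2\ge0$, $0\le u,P\le1$; the quadratic remainder $N(\bm{w})$ does not affect the exponential order. The lower bounds are where the real work lies: one must show the wave genuinely excites the slowest mode, i.e. that the leading coefficient is nonzero. For the decoupled scalar equations — $\hat u$ at $+\infty$ and $\hat P$ at $-\infty$ — the rate is pinned exactly, since a linear scalar ODE approaching a hyperbolic rest point has a definite leading exponential, and monotonicity ($\hat u'<0$, $\hat P<1$) excludes the degenerate zero-coefficient branch. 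For the forced components ($\hat v$ at $+\infty$, and $\hat u,\hat v$ at $-\infty$ driven by the slaved $\hat P$) I would use variation of parameters: the particular solution inherits the forcing rate and the homogeneous part carries the block rate, so the slower survives with a strictly signed coefficient by positivity.

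Finally, the resonant subcases and part (b). When two relevant rates coincide — $f_u=g_v$, the critical speed $c=2\sqrt{f_u}$, $\mu_1=\mu_2$, or $u^*/c=\tilde\lambda_2$ — the repeated root produces a factor $z\,e^{\lambda z}$, which I would absorb into the stated $e^{(\lambda\pm\epsilon)z}$ envelopes, whose constants blow up as $\epsilon\to0$ exactly as the proposition records. For the derivative estimates, I would differentiate the wave equation: $\hat{\bm{u}}'$ solves the variational system $A\bm{\phi}''+c\bm{\phi}'+DF(\hat{\bm{u}})\bm{\phi}=0$, whose linearization at $\hat{\bm{u}}_\pm$ has the same dispersion relation, so each component of $\hat{\bm{u}}'$ decays at the same rate as the corresponding component of $\hat{\bm{u}}-\hat{\bm{u}}_\pm$, with the sign pattern (nonpositive $\bm{h},\bm{H}$ for $\hat{\bm{u}}'$, nonnegative $\bm{n},\bm{N}$ for $-\hat{\bm{u}}'$) immediate from monotone decrease. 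I expect the main obstacle to be the lower bounds in the coupled $(\hat u,\hat v)$ block at $\hat{\bm{u}}_-$ and the bookkeeping of the resonant subcases; both fit the framework of Volpert–Volpert–Volpert \cite{Wang_Volpert_1994}, to which this analysis is an adaptation.
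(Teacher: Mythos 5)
Your proposal is correct and follows essentially the same route as the paper's Appendix A: the paper linearizes the traveling-wave ODE as a five-dimensional first-order system at $\hat{\bm{u}}_{\pm}$, obtains exactly the eigenvalues your quadratic pencil $\det(\lambda^2 A + c\lambda I + DF(\hat{\bm{u}}_{\pm})) = 0$ produces (including $\tilde{\lambda}_1 = u^*/c$ from the decoupled $P$-equation and $\tilde{\lambda}_2$ from $m = \mu_{1,2}/2$), and carries out the same principal-eigenvalue case analysis, with resonant double roots producing $z e^{\lambda z}$ factors absorbed into $\epsilon$-shifted exponents and with signs of the leading coefficients fixed by monotonicity and decay. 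If anything, your plan for the two-sided bounds (cooperative comparison for the upper bounds, variation of parameters for the components forced by the slaved $\hat{P}$) is more explicit about the nonlinear remainder than the paper's proof, which stops at signed linear approximations of the form $A_i \bm{y}_j e^{\lambda_j z}$.
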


\subsubsection{Stability}
\label{sec:Stability}
We now present the stability results for the bistable and monostable regions separately. We will see that the stability in the monostable regions allows for perturbations from a family of Banach spaces embedded with exponentially weighted norms, which requires a faster decay rate at infinity compared to the bistable case.

\begin{theorem}[Bistable]
\label{thm:bistable-stability}
    Let $k_0, k_1 \ge 0, k_2 > 0$, or $k_0, k_1 > 0, k_2 \ge 0$. For $k_2(k_0,k_1) < k_2 < k_2(k_1,P)$, the traveling waves whose existence is guaranteed by Theorem ~\eqref{thm:bistable-existence} are asymptotically stable with shift in the sup-norm $\lVert \cdot \rVert$ defined by $\lVert \bm{u} \rVert = \sup_{x \in \mathbb{R}} |\bm{u}(x)|$ for all vector-valued, continuous and bounded function $\bm{u}$ defined on $\mathbb{R}$.
\end{theorem}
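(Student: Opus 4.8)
The plan is to linearize system \eqref{Wang_Equation3} about the traveling wave in the moving frame and verify the hypotheses of the stability theory for monotone systems in \cite{Wang_Volpert_1994}. Passing to the coordinate $z = x - ct$, the wave $\hat{\bm{u}}$ becomes a stationary solution, and the linearized operator acting on a perturbation $\bm{w}$ is
\begin{align*}
    L \bm{w} = A \bm{w}'' + c \bm{w}' + DF(\hat{\bm{u}}(z)) \bm{w},
\end{align*}
with $A = \mathrm{diag}(1,1,0)$ and $DF$ from \eqref{Wang_Equation7}. Because $k_0, k_1, k_2 \ge 0$ and $0 \le u, P \le 1$, the system is monotone (cooperative) on the relevant invariant region, so solutions obey a comparison principle; this is the structural property on which the whole argument rests.

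First I would pin down the essential spectrum of $L$. For a cooperative system it is governed by the constant-coefficient limiting operators obtained by freezing $DF$ at the endpoints $\hat{\bm{u}}_\pm$. In the proof of Theorem \ref{thm:bistable-existence} it was already shown that, in the regime $k_2(k_0,k_1) < k_2 < k_2(k_1,P)$, every eigenvalue of both $DF(\hat{\bm{u}}_+)$ and $DF(\hat{\bm{u}}_-)$ lies in the open left half-plane — this is precisely the bistability of the source. Consequently the essential spectrum of $L$ is contained in a region $\{\mathrm{Re}\,\lambda \le -\delta\}$ for some $\delta > 0$, bounded away from the imaginary axis. Next I would control the point spectrum. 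Differentiating the wave equation \eqref{Wang_Equation12} in $z$ gives $L \hat{\bm{u}}' = 0$, so $\lambda = 0$ is an eigenvalue with eigenfunction $\hat{\bm{u}}'$. Since the wave is strictly monotonically decreasing, $\hat{\bm{u}}'$ has a strict (negative) sign on all of $\mathbb{R}$; by the positivity/Krein--Rutman theory for cooperative operators, an eigenfunction of definite sign must correspond to the principal (rightmost) eigenvalue, which is therefore $\lambda = 0$ and is simple. Hence every other element of the spectrum satisfies $\mathrm{Re}\,\lambda < 0$, and the spectrum of $L$ lies in $\{\mathrm{Re}\,\lambda < 0\} \cup \{0\}$ with $0$ a simple isolated eigenvalue.

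With these spectral facts in place, the conclusion follows from the stability theorem for monotone bistable waves in \cite{Wang_Volpert_1994}: the simple zero eigenvalue produces only a translation — the shift $h$, obtained by projecting $\bm{u}_0 - \hat{\bm{u}}$ onto the translation mode — while the spectral gap $\delta$ forces exponential decay of the remaining perturbation in the sup-norm, giving the estimate \eqref{Wang_Equation13}.

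The main obstacle is the degeneracy of the diffusion matrix: $A$ is only positive semidefinite because the $P$-equation carries no Laplacian, so the system is not uniformly parabolic and the spectral theory cannot be quoted verbatim. I would handle this by exploiting that the $P$-component is slaved to $u$ through the ODE $P_t = u(1-P)$, which integrates explicitly along characteristics and preserves the cooperative ordering, so that the comparison principle and the essential-spectrum computation survive the missing diffusion (the degenerate component contributes a vertical spectral line near $\mathrm{Re}\,\lambda = -u^*$, still bounded away from the imaginary axis). Equivalently, I would build the estimate directly with Fife--McLeod-type super- and subsolutions of the form $\hat{\bm{u}}(z \pm \xi(t)) \pm \bm{\delta}(t)$, where the admissible decay rates of $\hat{\bm{u}}'$ supplied by Proposition \ref{prop:asymp-decay-rates-triv} are exactly what is needed to verify the required differential inequalities near the two stable endpoints.
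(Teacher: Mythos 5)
Your overall route is the same as the paper's: both arguments reduce the statement to the stability theorem for monotone bistable waves in \cite{Wang_Volpert_1994} (Theorem 4.1 of Chapter 5), importing the endpoint spectral conditions from the proof of Theorem~\ref{thm:bistable-existence}. The paper, however, proceeds by directly verifying the hypotheses of that theorem, and one of those hypotheses is exactly the step your argument omits: \emph{functional irreducibility} of the Jacobian $DF(\hat{\bm{u}}(z))$ along the wave. Your middle paragraph re-derives the internal spectral picture --- essential spectrum in the left half-plane, $L\hat{\bm{u}}'=0$, and then ``an eigenfunction of definite sign must be the principal eigenvalue, which is simple'' by Krein--Rutman. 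That implication is false for systems that are cooperative but reducible: in a block-triangular cooperative system a sign-definite eigenfunction can belong to a non-principal eigenvalue, and the principal eigenvalue need not be simple. Irreducibility is precisely what the sign hypotheses of the theorem ($k_2>0$, or else $k_0,k_1>0$ with $k_2\ge 0$, together with $\Gamma,\beta\neq 0$) are there to guarantee: if $k_1=k_2=0$ then $f_P\equiv 0$ and $g_P\equiv 0$, so the third column of \eqref{Wang_Equation7} vanishes in its first two entries, the $(u,v)$ block decouples from $P$, and the Jacobian is reducible. Your proof never invokes these sign hypotheses at all, so as written it would ``prove'' the excluded case $k_1=k_2=0$ as well --- a clear sign that a needed ingredient is missing. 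To close the gap you must check, as the paper does, that $DF(\hat{\bm{u}}(z))$ is functionally irreducible under the stated conditions, and then either cite Volpert's theorem directly (which makes your spectral digression unnecessary) or use irreducibility to legitimize the Krein--Rutman step.

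On the other hand, your final paragraph raises a legitimate point that the paper passes over in silence: the diffusion matrix $A=\mathrm{diag}(1,1,0)$ is degenerate, so the system is not uniformly parabolic and the cited theory, formulated for nondegenerate diffusion, does not apply verbatim. Your two proposed remedies --- exploiting that $P$ is slaved to $u$ through the ODE, or building Fife--McLeod sub- and supersolutions using the decay rates of Proposition~\ref{prop:asymp-decay-rates-triv} --- are both reasonable directions, but they are sketches rather than proofs; in particular, the sub/supersolution construction would require verifying the differential inequalities for the full three-component system including the non-diffusing component, which is where the real work lies. So on this point your attempt is more careful than the published argument, but it does not yet amount to a complete proof, and the irreducibility gap above must be repaired in either version.
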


\begin{proof}
    Let us denote the monotone wave solution of system ~\eqref{Wang_Equation3} by $\hat{\bm{u}}(z)$. Since the system is monotone for $k_0, k_1, k_2 \ge 0$, its Jacobian $DF(\hat{\bm{u}}(z))$ has nonnegative off-diagonal elements. One can also check that it is functionally irreducible so long as $\Gamma, \beta \neq 0$, and either $k_0, k_1 \ge 0, k_2 > 0$ or $k_0, k_1 > 0, k_2 \ge 0$. For the eigenvalue conditions, recall from the proof for Theorem ~\eqref{thm:bistable-existence} that matrices $DF(\hat{\bm{u}}_-)$ and $DF(\hat{\bm{u}}_+)$ have all their eigenvalues in the left half-plane, where, again, $\hat{\bm{u}}_-$ and $\hat{\bm{u}}_+$ are the two steady states that $\hat{\bm{u}}(z)$ approaches as $z \to -\infty$ and $z \to +\infty$, respectively. The last condition $\lim_{z \to \infty} \hat{\bm{u}}'(z) = 0$ results from the monotonicity assumption $\hat{\bm{u}}'(z) < 0$ and the definition $\lim_{z \to \infty} \hat{\bm{u}}(z) = \hat{\bm{u}}_+$. Therefore, the wave $\hat{\bm{u}}(z)$ is asymptotically stable with a shift in the sup-norm by Theorem 4.1 in chapter 5 of \cite{Wang_Volpert_1994}.
\end{proof}

\begin{theorem}[Monostable]
\label{thm:monostable-stability}
    Let $k_0, k_1 \ge 0, k_2 > 0$, or $k_0, k_1 > 0, k_2 \ge 0$. For $k_2 > k_2(k_0,k_1) $ and $k_2 > k_2(k_1,P)$, the traveling waves whose existence is guaranteed by Theorem ~\eqref{thm:monostable-existence} are asymptotically stable in the norm $\lVert \cdot \rVert _{\sigma}$ for some $\sigma \ge 0$. The norm $\lVert \cdot \rVert _{\sigma}$ is  defined by $\lVert \bm{u} \rVert _{\sigma} = \sup_{x \in \mathbb{R}} |\bm{u}(x)(1 + e^{\sigma x})|$ for all vector-valued, continuous and bounded function $\bm{u}$ defined on $\mathbb{R}$ such that $\lim_{|x| \to \infty} \bm{u}(x)(1 + e^{\sigma x}) = 0$.
\end{theorem}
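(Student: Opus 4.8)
The plan is to verify the hypotheses of the weighted-norm, monostable counterpart of the monotone-system stability theorem in Chapter 5 of \cite{Wang_Volpert_1994}, the same source that supplied Theorem \ref{thm:bistable-stability}. The decisive difference from the bistable case is that the wave now connects the stable nontrivial state $\hat{\bm{u}}_-$ (reached as $z \to -\infty$) to the \emph{unstable} trivial state $\hat{\bm{u}}_+$ (reached as $z \to +\infty$): in the monostable regime $k_2 > k_2(k_1,P)$ the second eigenvalue $\lambda_2 = f_u(\hat{\bm{u}}_+)$ of \eqref{Wang_Equation9} is positive, so the unweighted linearization has essential spectrum protruding into the right half-plane and the sup-norm argument of Theorem \ref{thm:bistable-stability} breaks down. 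The exponential weight $1 + e^{\sigma x}$, which grows as $x \to +\infty$ precisely where the wave approaches $\hat{\bm{u}}_+$, is introduced to damp perturbations near the unstable end and return this essential spectrum to the open left half-plane.

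First I would record the structural hypotheses, which carry over verbatim from the proof of Theorem \ref{thm:bistable-stability}: for $k_0, k_1 \ge 0, k_2 > 0$ or $k_0, k_1 > 0, k_2 \ge 0$ the Jacobian $DF(\hat{\bm{u}}(z))$ has nonnegative off-diagonal entries and is functionally irreducible (using $\Gamma, \beta \neq 0$), while at the stable endpoint $\hat{\bm{u}}_-$ all three eigenvalues of $DF(\hat{\bm{u}}_-)$ are real and negative, as already checked in the proof of Theorem \ref{thm:monostable-existence}. Monotonicity of the wave also gives $\hat{\bm{u}}'(z) < 0$ and $\lim_{z \to \pm\infty}\hat{\bm{u}}'(z) = 0$. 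Consequently the only genuinely new ingredient, and the heart of the proof, is the spectral condition at the unstable endpoint, which must be arranged by a suitable choice of $\sigma$.

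The crux is therefore the spectral analysis of the comoving linearization $L\bm{\phi} = A\bm{\phi}'' + c\bm{\phi}' + DF(\hat{\bm{u}}(z))\bm{\phi}$ in the weighted space. Its essential spectrum is generated at the two endpoints; at $\hat{\bm{u}}_-$ the weight tends to $1$ and the left-half-plane spectrum of $DF(\hat{\bm{u}}_-)$ makes that contribution harmless, so only $\hat{\bm{u}}_+$ matters. There the $u$-component contributes the parabola $\lambda = d_1\nu^2 + c\nu + f_u(\hat{\bm{u}}_+)$, which, since $f_u(\hat{\bm{u}}_+) > 0$, reaches into $\mathrm{Re}\,\lambda > 0$. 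Conjugating $L$ by $e^{\sigma z}$ replaces this branch by one with $\mathrm{Re}\,\lambda \le d_1\sigma^2 - c\sigma + f_u(\hat{\bm{u}}_+)$, which is strictly negative exactly when $\sigma$ lies strictly between the two roots $\sigma_{\pm} = \bigl(c \pm \sqrt{c^2 - 4 d_1 f_u(\hat{\bm{u}}_+)}\,\bigr)/(2 d_1)$; this window is non-empty precisely when $c > 2\sqrt{d_1 f_u(\hat{\bm{u}}_+)}$, i.e.\ the monostable threshold $c \ge c_*$ of Theorem \ref{thm:monostable-existence}. For $\sigma \in (\sigma_-, \sigma_+)$ the slowly decaying translation eigenfunction $\hat{\bm{u}}'$, which by Proposition \ref{prop:asymp-decay-rates-triv}(b) decays at the slower spatial rate $\sigma_-$ as $z \to +\infty$, no longer belongs to the weighted space, so the neutral eigenvalue at $\lambda = 0$ is expelled from the point spectrum; this is exactly what upgrades the conclusion from stability-with-shift to genuine asymptotic stability. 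Here Proposition \ref{prop:asymp-decay-rates-triv} plays a double role: it identifies the endpoint decay rates that fix $\sigma_-$ as the lower edge of the admissible window (with the borderline speed $c = 2\sqrt{f_u(\hat{\bm{u}}_+)}$ handled through the $\epsilon$-sharpened estimates), and it guarantees exponential approach of the wave and its derivative, so that the nonlinear remainder remains in the weighted space.

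With some $\sigma \in (\sigma_-, \sigma_+)$ fixed, monotonicity, functional irreducibility, the left-half-plane spectrum at $\hat{\bm{u}}_-$, and the weighted essential spectrum all lie in $\mathrm{Re}\,\lambda < 0$, which are the hypotheses of the monostable stability theorem of \cite{Wang_Volpert_1994}; that theorem then delivers the estimate \eqref{Wang_Equation14} in $\lVert \cdot \rVert_{\sigma}$. I expect the principal obstacle to be precisely the spectral step: locating the weighted essential spectrum and confirming that the window $(\sigma_-, \sigma_+)$ is non-empty for every admissible $c \ge c_*$, while keeping the decay rates at $-\infty$ compatible with the weight. A secondary technical point is that the diffusion matrix $A$ is degenerate in the $P$-component, so one must invoke the version of the theory that permits a singular diagonal diffusion matrix and lean on irreducibility to control the non-diffusing equation.
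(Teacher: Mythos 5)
Your proposal follows essentially the same route as the paper's proof: you verify the same structural hypotheses (monotonicity, functional irreducibility, left-half-plane spectrum of $DF(\hat{\bm{u}}_-)$), choose $\sigma$ in the same window $\left(\frac{c-\sqrt{c^2-4f_u(\hat{\bm{u}}_+)}}{2},\ \frac{c+\sqrt{c^2-4f_u(\hat{\bm{u}}_+)}}{2}\right)$ so that the weighted matrix $A\sigma^2 - C\sigma + DF(\hat{\bm{u}}_+)$ has spectrum in the left half-plane, use Proposition \ref{prop:asymp-decay-rates-triv}(b) to show $\hat{\bm{u}}'(z)e^{\sigma z}$ diverges as $z \to \infty$ (your ``expulsion of the translation mode,'' which is what upgrades stability-with-shift to genuine asymptotic stability), and conclude via the monostable stability theorem in Chapter 5 of \cite{Wang_Volpert_1994}. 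The only differences are cosmetic: you phrase the spectral condition as locating the essential spectrum of the exponentially conjugated operator, whereas the paper directly computes the eigenvalues of the shifted matrix, and your remarks on the degenerate diffusion in the $P$-equation and the borderline speed $c = 2\sqrt{f_u(\hat{\bm{u}}_+)}$ are refinements the paper leaves implicit.
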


\begin{proof}
    Again, the Jacobian $DF(\hat{\bm{u}}(z))$ is functionally irreducible in the given region where either $k_0, k_1 \ge 0, k_2 > 0$ or $k_0, k_1 > 0, k_2 \ge 0$. We have checked in the proof for ~\eqref{thm:monostable-existence} that $DF(\hat{\bm{u}}_-)$ has all its eigenvalues in the left half-plane. Now, we want to find a nonnegative $\sigma$ that shifts all eigenvalues of the matrix $A \sigma^2 - C\sigma + DF(\hat{\bm{u}}_+)$ to the left half-plane. The eigenvalues of this matrix are
    \[
    \lambda_1 = \sigma^2 - c\sigma + f_{u}(\hat{\bm{u}}_+), \quad
    \lambda_2 = \sigma^2 - c\sigma + g_{v}(\hat{\bm{u}}_+), \quad
    \lambda_3 = -c\sigma.
    \]
    Since $g_{v}(\hat{\bm{u}}_+) < f_{u}(\hat{\bm{u}}_+)$ always holds in the monostable regions, we only need to require that $\lambda_1 < 0$. This is satisfied as long as
    \[
    \frac{c - \sqrt{c^2 - 4f_u(\hat{\bm{u}}_+)}}{2} < \sigma < \frac{c + \sqrt{c^2 - 4f_u(\hat{\bm{u}}_+)}}{2},
    \]
    with $c > 2\sqrt{f_u(\hat{\bm{u}}_+)}$.
    
    The last part of the proof is to show that $\hat{\bm{u}}'(z)e^{\sigma z}$ diverges as $z \to \infty$. By part (b) of Proposition \ref{prop:asymp-decay-rates-triv}, we have
    \[
    \bm{h}_c e^{(\lambda_2+\sigma)z} \le \hat{\bm{u}}'(z) e^{\sigma z} \le \bm{H}_c e^{(\lambda_2+\sigma)z}.
    \]
    Since $\lambda_2 = \dfrac{-c + \sqrt{c^2 - 4f_u(\hat{\bm{u}}_+)}}{2}$, using the lower bound for $\sigma$ found above, we see that $\lambda_2 + \sigma > 0$ and conclude divergence.

    By Theorem 4.1 in chapter 5 of \cite{Wang_Volpert_1994}, the waves are asymptotically stable in the norm $\lVert \cdot \rVert _{\sigma}$.
\end{proof}

The waves in the monostable region in the monotone system in \cite{Wang_Yang_2021} should be asymptotically stable without a shift due to the divergence condition.


\section{Model outcomes from numerical experiments}
In this section, we explore parameter space to study important features that are difficult to analyze theoretically. Our emphasis is on studying the effect of $k_0, k_1$, and $k_2$ on these features; refer to Table \ref{table:k} for a refresher on the physical interpretation of these parameters. The specific values for all parameters employed in these simulations are detailed in Table \ref{table:ModelVals_NumericalSim} within Appendix \ref{app:tables}.

\subsection{Quantitative outcomes}

We present various metrics to assess the outcomes of simulations conducted under different conditions. While many measurements can quantify the dynamics of $u$, $v$, and $P$ in a system initiated with predominantly localized protests, a moderate level of social tension everywhere, and no police presence, we intended to provide a combination of local and global measurements. It is important to note that the speed of the traveling wave remains independent of the parameters. Additionally, by $t=15$, we observed that the profiles for $u$ with the parameters outlined in Table \ref{table:ModelVals_NumericalSim} had fully formed. 
The profile for $v$ required more time to form, and by $t=100$, most wave profiles had been reached.

As mentioned in the introduction, a wave of protests is initiated when significant social tension is coupled with a triggering event. Our numerical simulations assume an initial social tension of one uniformly across space. The triggering event is assumed to occur at the leftmost boundary of the domain, resulting in an initial condition of $u(x,0)=e^{-5x}$. It is natural to conceptualize this event as a localized function with a decaying impact relative to the distance from the triggering event location. It is well-established that exponentially decaying initial data gives rise to traveling wave solutions. Furthermore, we assume an initial condition of zero throughout the domain for police presence.

\subsubsection{Invading protest levels}

A metric of interest for individuals is the degree of activity and social tension encountered as the protest wavefront passes through. Specifically, the invading protest wavefront serves as a distinct boundary separating no protest activity immediately to its right from a positive level of activity on it. Given the constant wave speed across all simulations, we observe a near-identical nature of this interface among simulations. To the left of this point, the behavior diverges from simulation to simulation for varying parameter values (refer to Figures \ref{Wang_Figure29}-\ref{Wang_Figure31}). The $x$ coordinate where the simulation-to-simulation behavior diverges is easily discernible in simulations with negative $k_0$ values, identified as the local maximum of the interface (e.g., in Figure \ref{Wang_Figure29}, it occurs at approximately $x=454$). We denote this $x$ coordinate as the location of the invading interface because it marks the trailing edge of a sharp interface distinguishing the region unexposed to protesting activity from the exposed region. In Figures \ref{Wang_Figure26}-\ref{Wang_Figure28}, we present the level of invading protest activity (i.e., the level at the invading interface) for various values of $k_0$, $k_1$, and $k_2$, demonstrating slight variations. This also underscores the point that the invading level does not necessarily correspond to the highest point of the traveling wave, as some profiles continue to grow beyond the invading interface (e.g., the profiles in Figure \ref{Wang_Figure29}).

As depicted in Figures \ref{Wang_Figure26}-\ref{Wang_Figure28}, an increase in any of the parameters $k_0$, $k_1$, or $k_2$ results in a higher level of $u$ at the invading interface. In other words, the intensity of protesting activity strictly and monotonically rises as a function of any of the parameters $k_i$. While the final level of protesting activity after the front has passed cannot be predicted \textit{a priori} solely based on the level of protest activity at the invading interface, it holds true that in regions unexposed to protesting, the highest levels of protesting activity at the invading interface occur when the values of $k_i$ are at their maximum. An intriguing observation is that $k_1$ appears to exert a greater influence on the activity level than $k_2$. In Figures \ref{Wang_Figure26}-\ref{Wang_Figure28}, we note that the bottom right quadrants, where $k_1<0$ and $k_2>0$, exhibit less activity compared to scenarios where $k_1>0$. 

\begin{figure} [H]
     \centering
     \begin{subfigure}[t]{0.32\textwidth}
         \includegraphics[width=\textwidth]{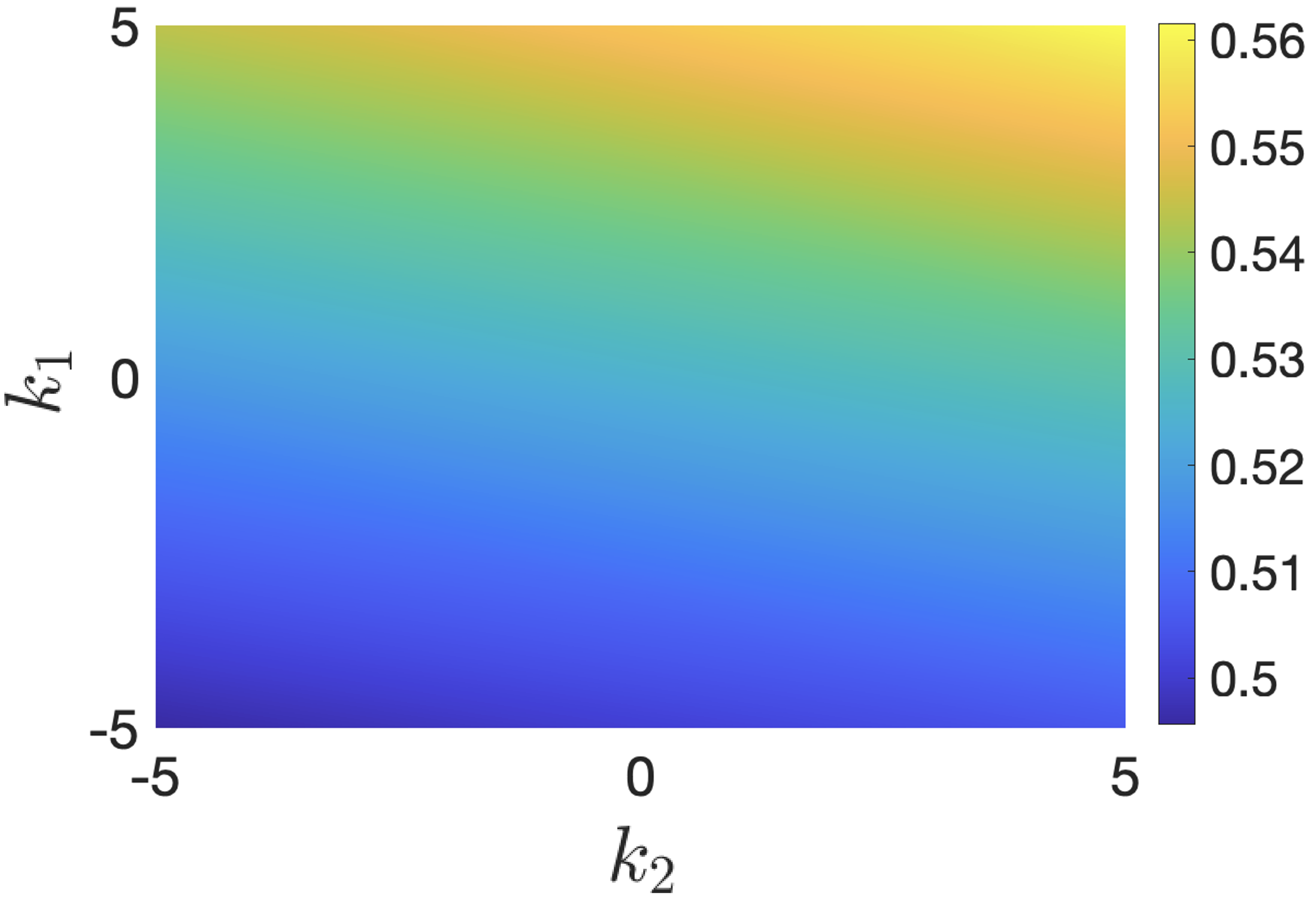}
         \caption{$k_0=-1$}
         \label{Wang_Figure26}
    \end{subfigure}
    \begin{subfigure}[t]{0.32\textwidth}
        \includegraphics[width=\textwidth]{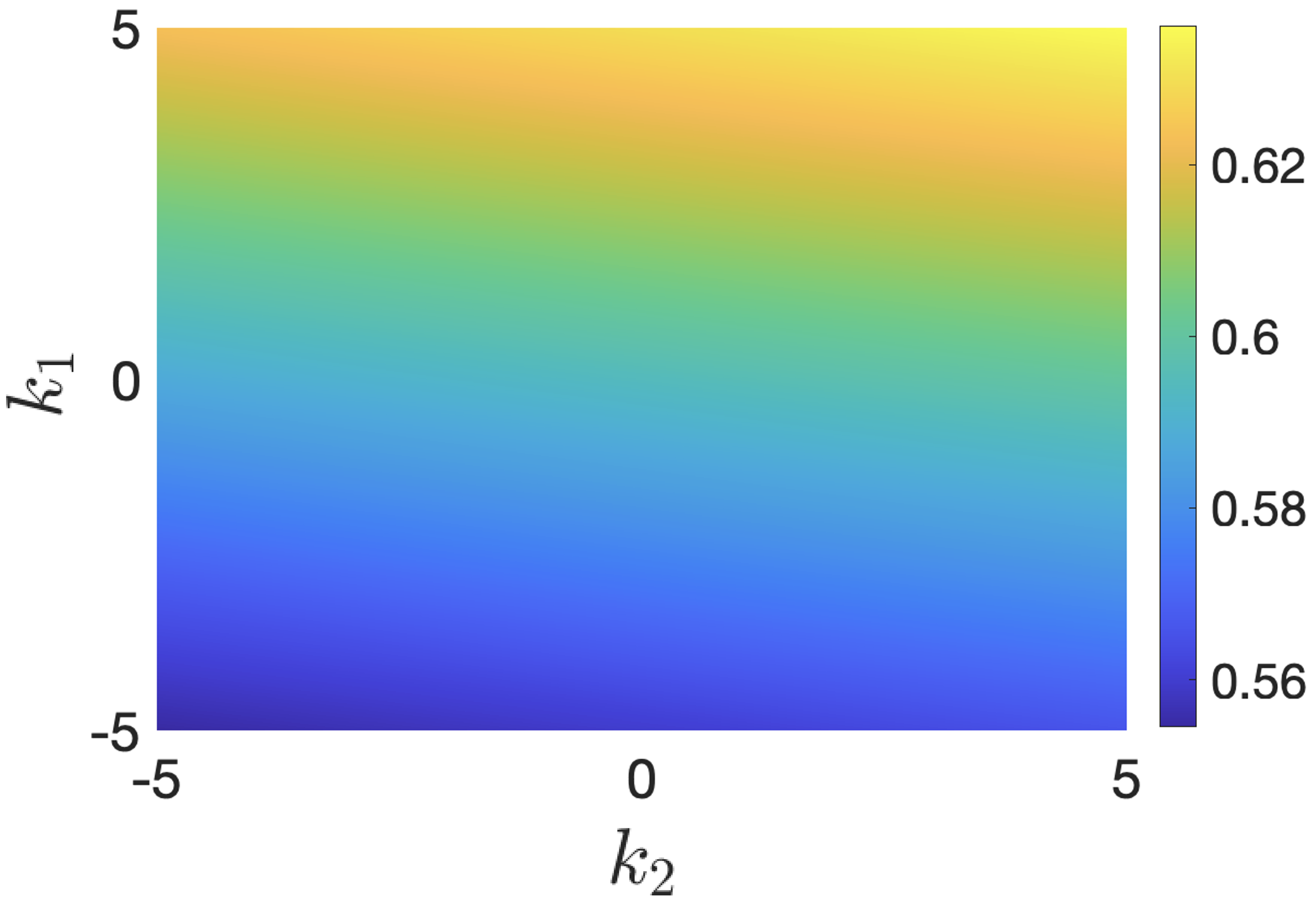}
        \caption{$k_0=0$}
        \label{Wang_Figure27}
    \end{subfigure}
    \begin{subfigure}[t]{0.32\textwidth}
        \includegraphics[width=\textwidth]{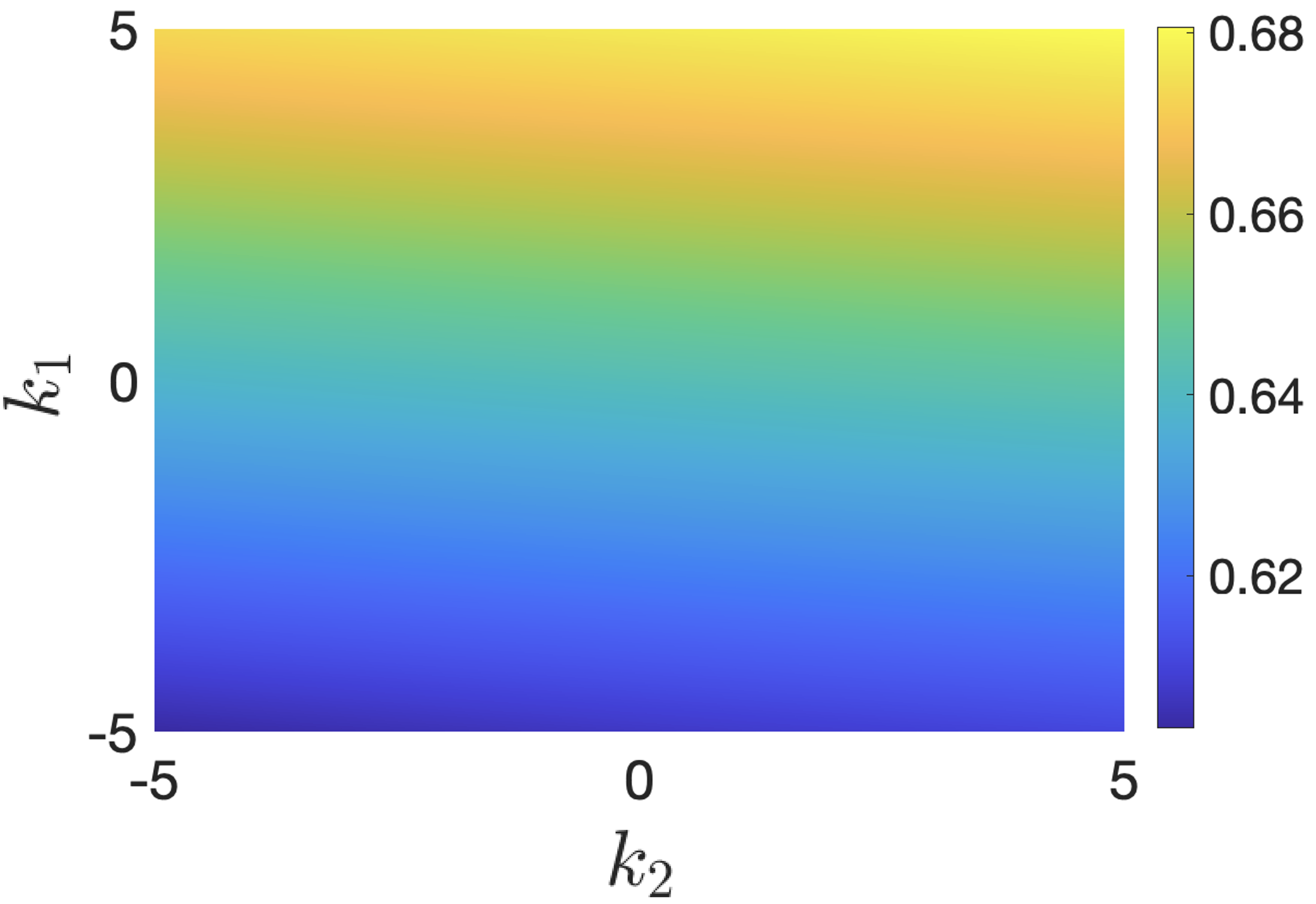}
        \caption{$k_0=1$}
        \label{Wang_Figure28}
    \end{subfigure}
    
    \begin{subfigure}[t]{0.27\textwidth}
         \includegraphics[width=\textwidth]{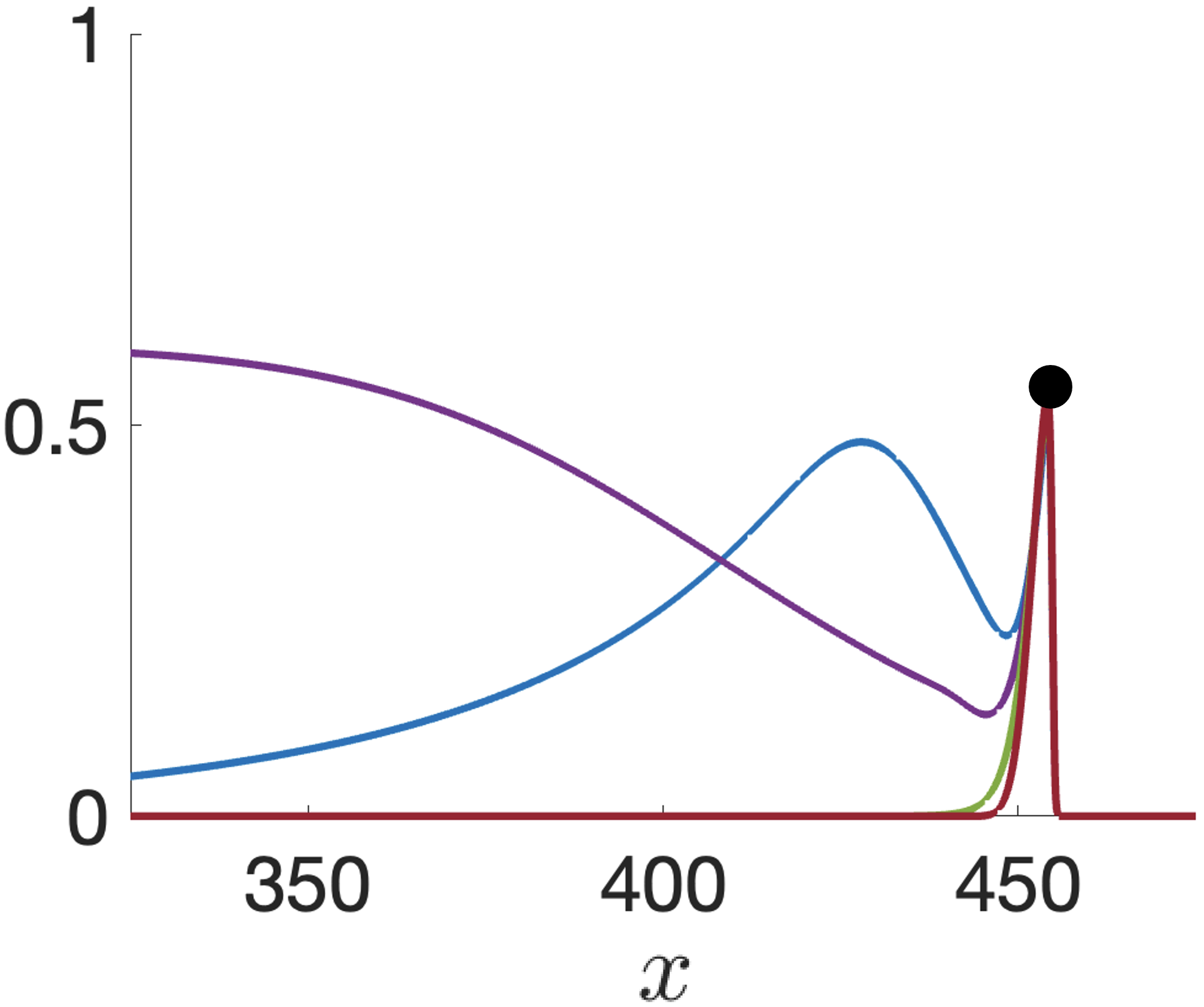}
         \caption{$k_0=-1$}
         \label{Wang_Figure29}
    \end{subfigure}
    \begin{subfigure}[t]{0.27\textwidth}
        \includegraphics[width=\textwidth]{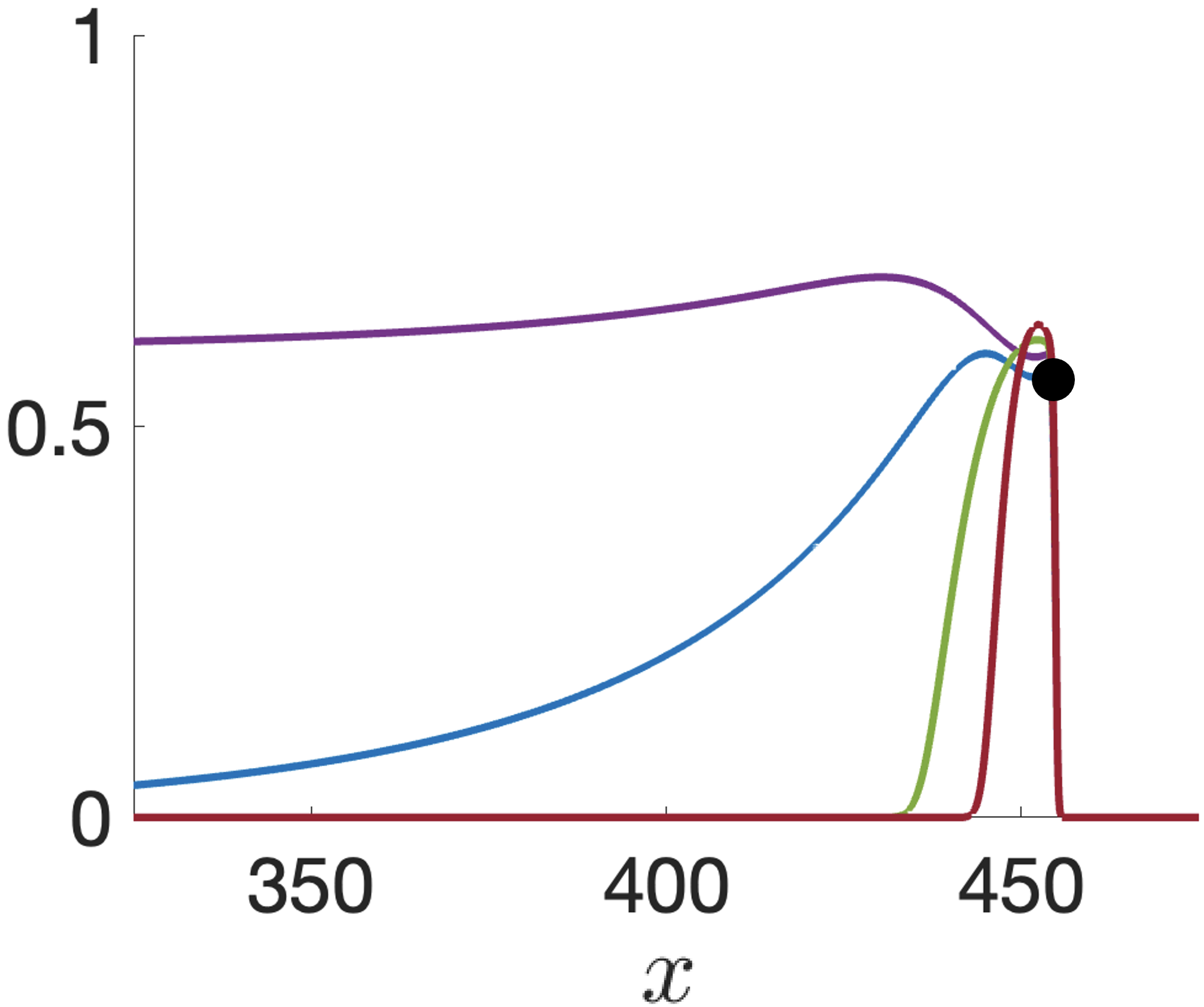}
        \caption{$k_0=0$}
        \label{Wang_Figure30}
    \end{subfigure}
    \begin{subfigure}[t]{0.27\textwidth}
        \includegraphics[width=\textwidth]{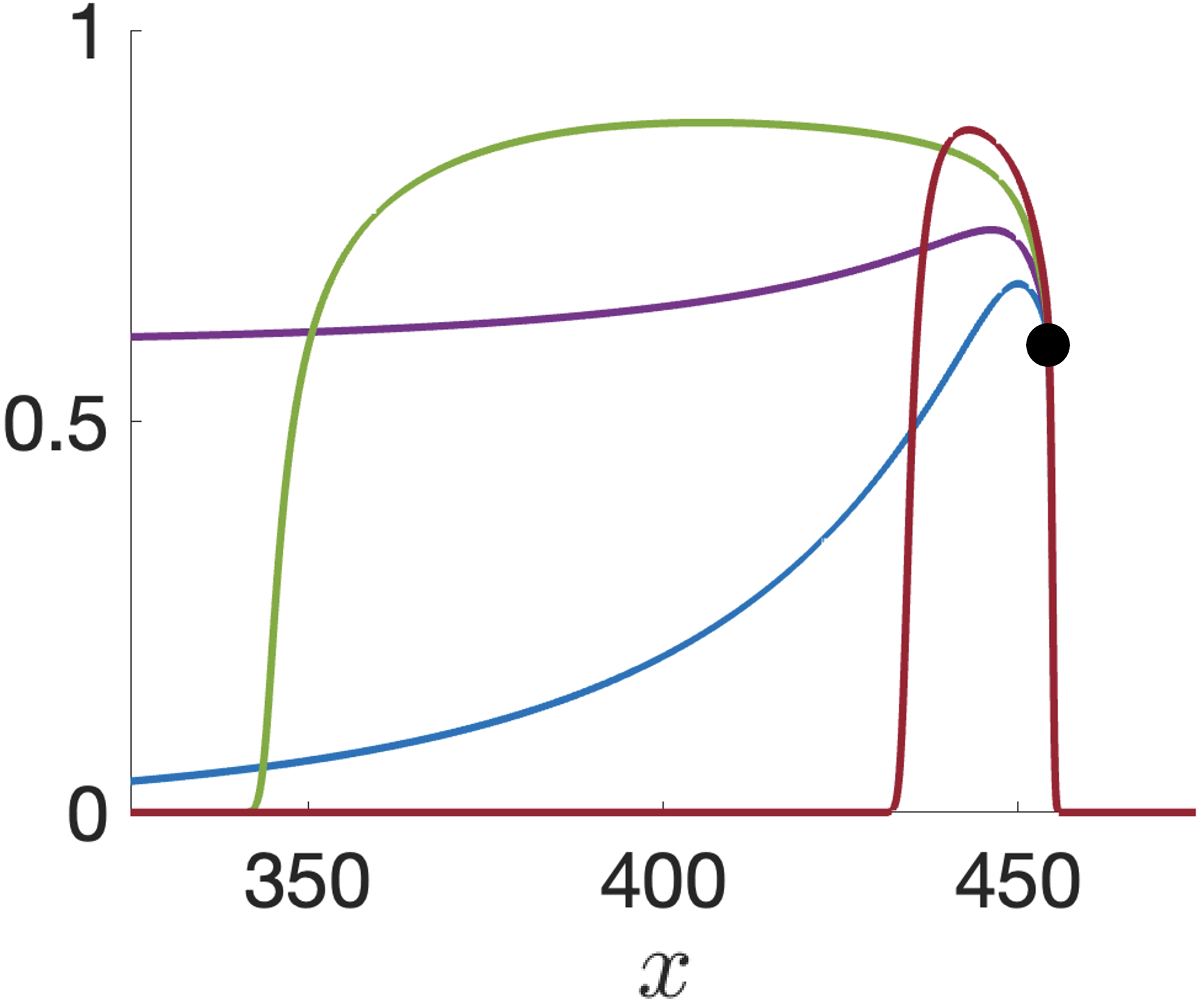}
        \caption{$k_0=1$}
        \label{Wang_Figure31}
    \end{subfigure}
    \begin{subfigure}[t]{0.15\textwidth}
        \includegraphics[width=\textwidth]{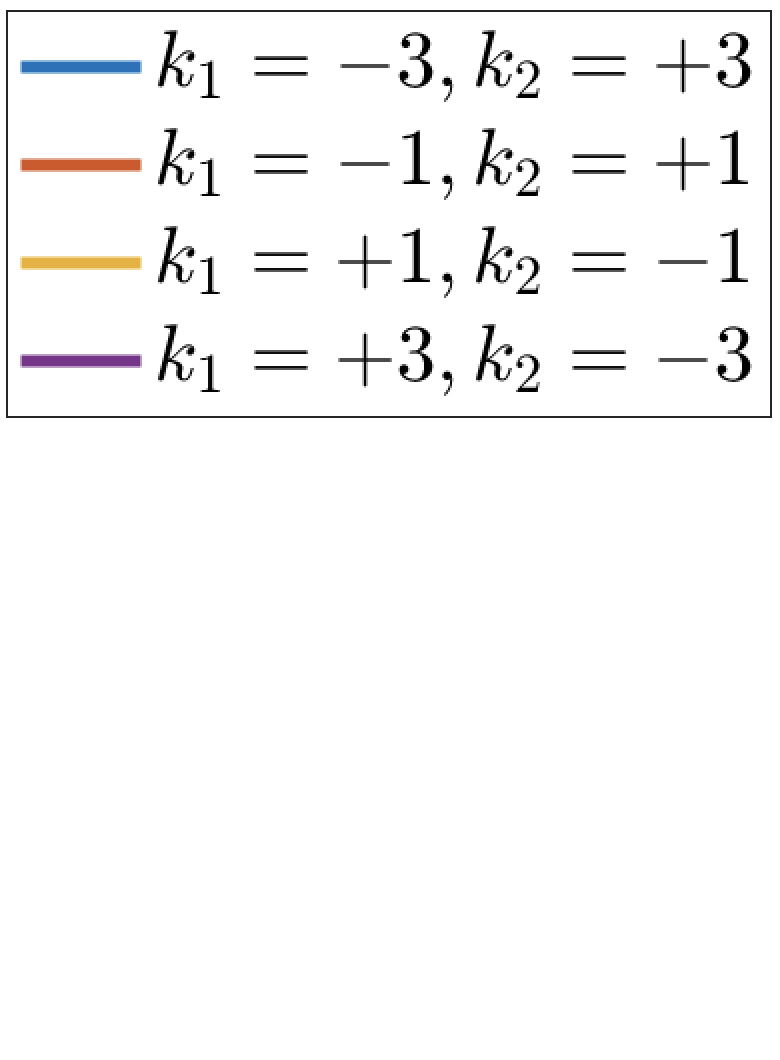}
    \label{Wang_Figure32}
    \end{subfigure}
    \caption{(a)-(c) Heatmaps for the value of the invading protest levels for simulations run with various combinations of $k_1$ and $k_2$. (d)-(f) Sample profiles from different regions from the figures (a)-(c) with the legend providing the $k_1$ and $k_2$ coordinates for the respective profiles. The black dot signifies the location of the invading interface. Note: in (d) the green curve is plotted but is mostly covered by the red curve.}
    \label{Wang_Figure33}
\end{figure}

\subsubsection{Residual levels left after the front interface passes}

Another noteworthy metric is the residual level of protest activity and social tension that persists after the interface traverses. Based on the prior sections' analysis, we know that the constant state at the fixed $x$ coordinate must have either $u$ approaching zero or $P$ approaching one. In other words, the protest traveling wave can result in a sustained nonzero level of protest activity that no longer alters social tension levels, or it may exhibit a near-pulse profile of activity that essentially diminishes after passing.

We have measured the residual levels of protest activity, tension, and police presence across various values of $k_0$, $k_1$, and $k_2$, illustrating them in a heatmap presented in Figure \ref{Wang_Figure43}. It is crucial to highlight that in all experiments, when $k_2<0$, indicative of a negotiated management strategy, the residual values are either zero or close to zero, even when $k_1>0$. By examining these values, we gain insights into the contrasting roles that police presence can play in influencing protest activity. For instance, in scenarios where $k_2>0$, signifying an escalated force management strategy, we observe a positive level of activity remaining, even if police presence results in deterrence ($k_1<0$). Furthermore, there are specific regions where $k_1<0$ and $k_2>0$, showcasing a notable persistence of tension after the front interface has passed.  

From these observations, it becomes apparent that a positive $k_2$ value outweighs an inhibitory $k_1$ value ($k_1<0$), suggesting that if police presence raises social tension, it will lead to more protest activity even if it discourages protest participation. Moreover, the parameter $k_2$ appears sensitive for small absolute values but less so for large magnitudes concerning the expected residual protest activity after the wave interface passes. This implies that the extent to which police presence enhances social tension may not matter as much as the fact that it does enhance it; a positive level of protest activity is expected to persist for a fixed $k_1$ value. A similar trend is observed for $k_1$, where a positive $k_1$ value leads to approximately the same residual activity regardless of the value, whereas, for negative values, there are more gradual changes in residual activity as $k_1$ decreases (for a fixed $k_2>0$).

\begin{figure} [H]
    \centering
    \begin{subfigure}[t]{0.32\textwidth}
        \includegraphics[width=\textwidth]{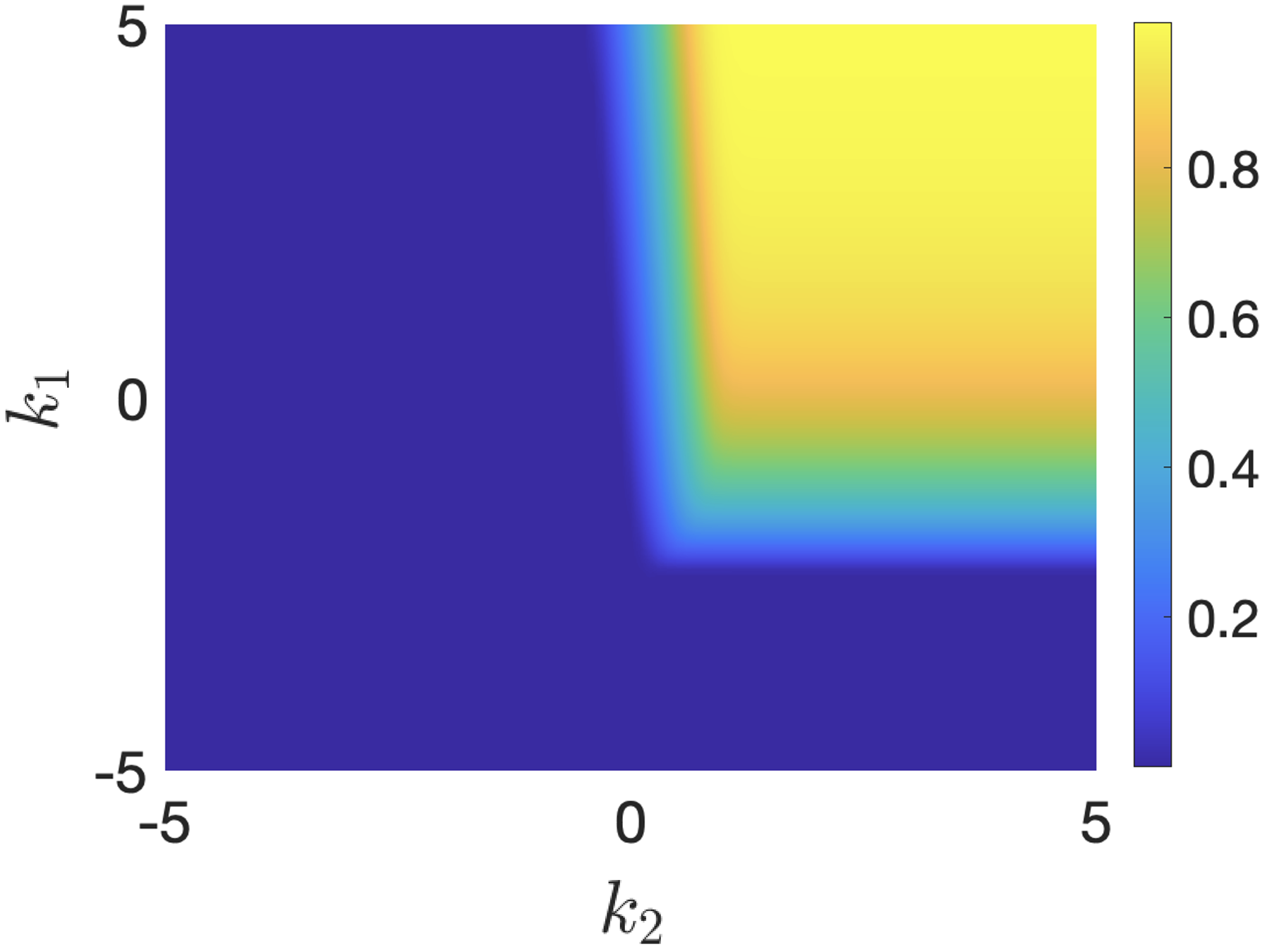}
        \caption{$u$ (with $k_0=-1$)}
        \label{Wang_Figure34}
    \end{subfigure}
    \begin{subfigure}[t]{0.32\textwidth}
        \includegraphics[width=\textwidth]{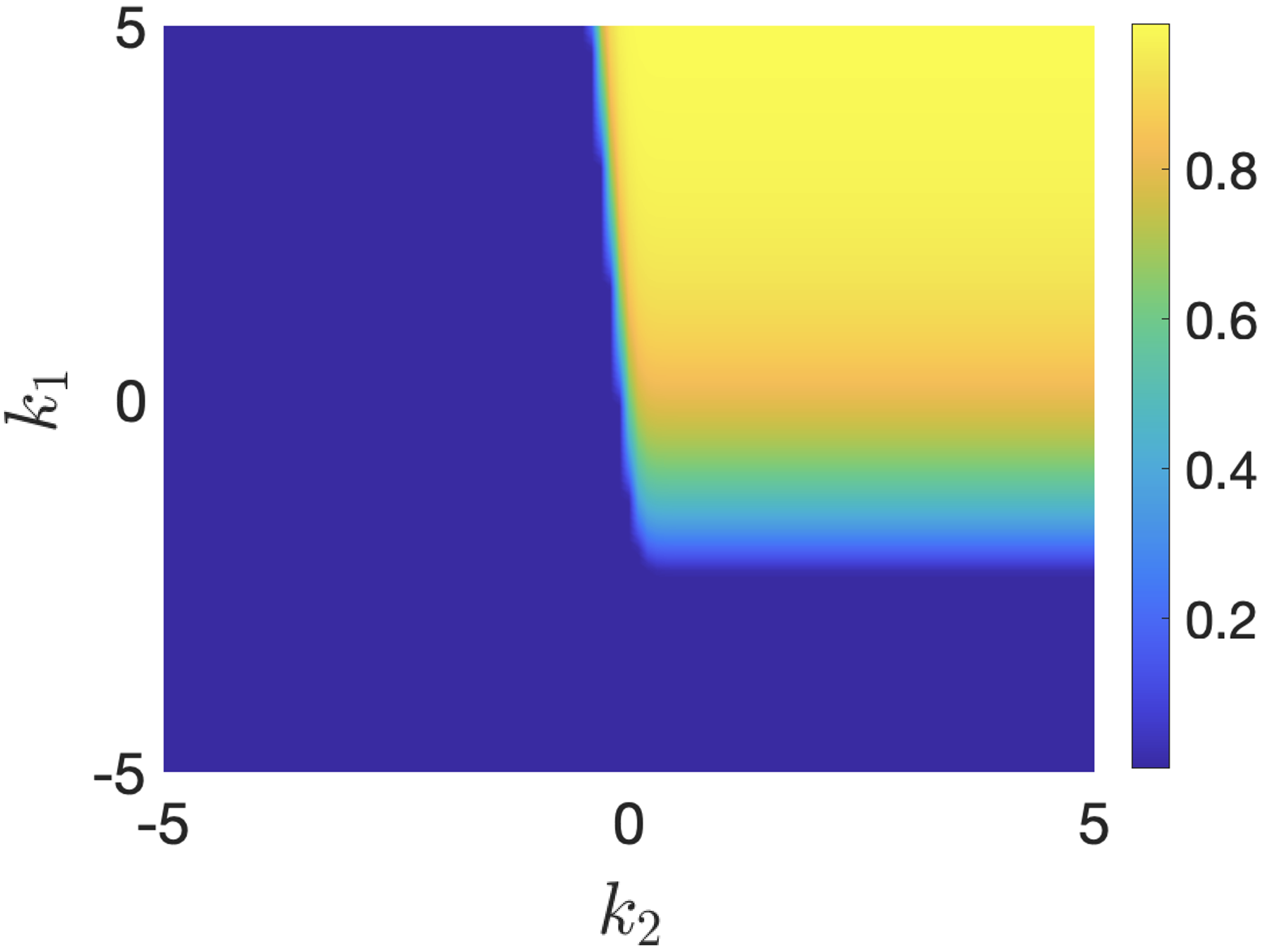}
        \caption{$u$ (with $k_0=0$)}
        \label{Wang_Figure35}
    \end{subfigure}
    \begin{subfigure}[t]{0.32\textwidth}
        \includegraphics[width=\textwidth]{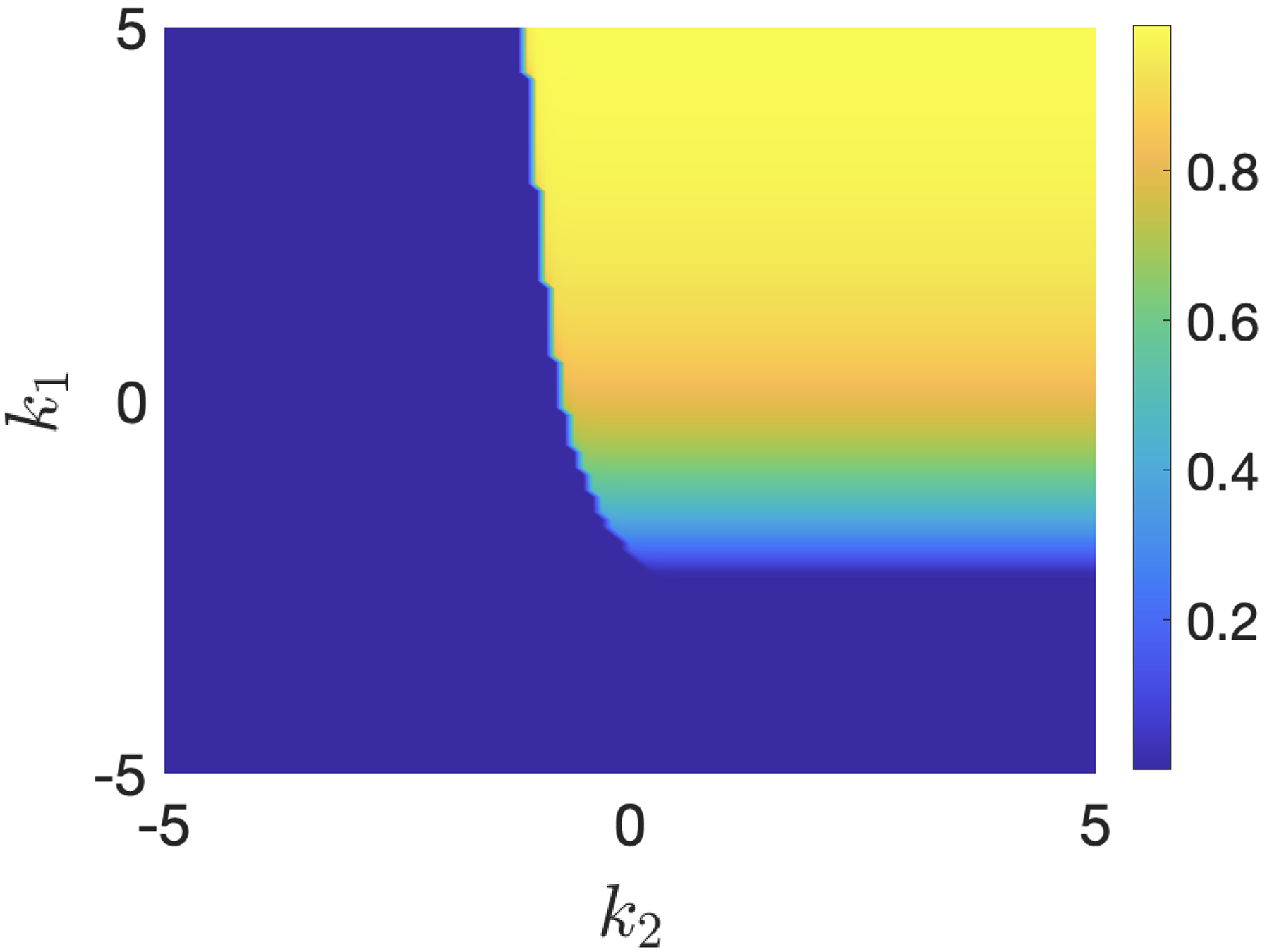}
        \caption{$u$ (with $k_0=+1$)}
        \label{Wang_Figure36}
    \end{subfigure}
    
    \begin{subfigure}[t]{0.32\textwidth}
        \includegraphics[width=\textwidth]{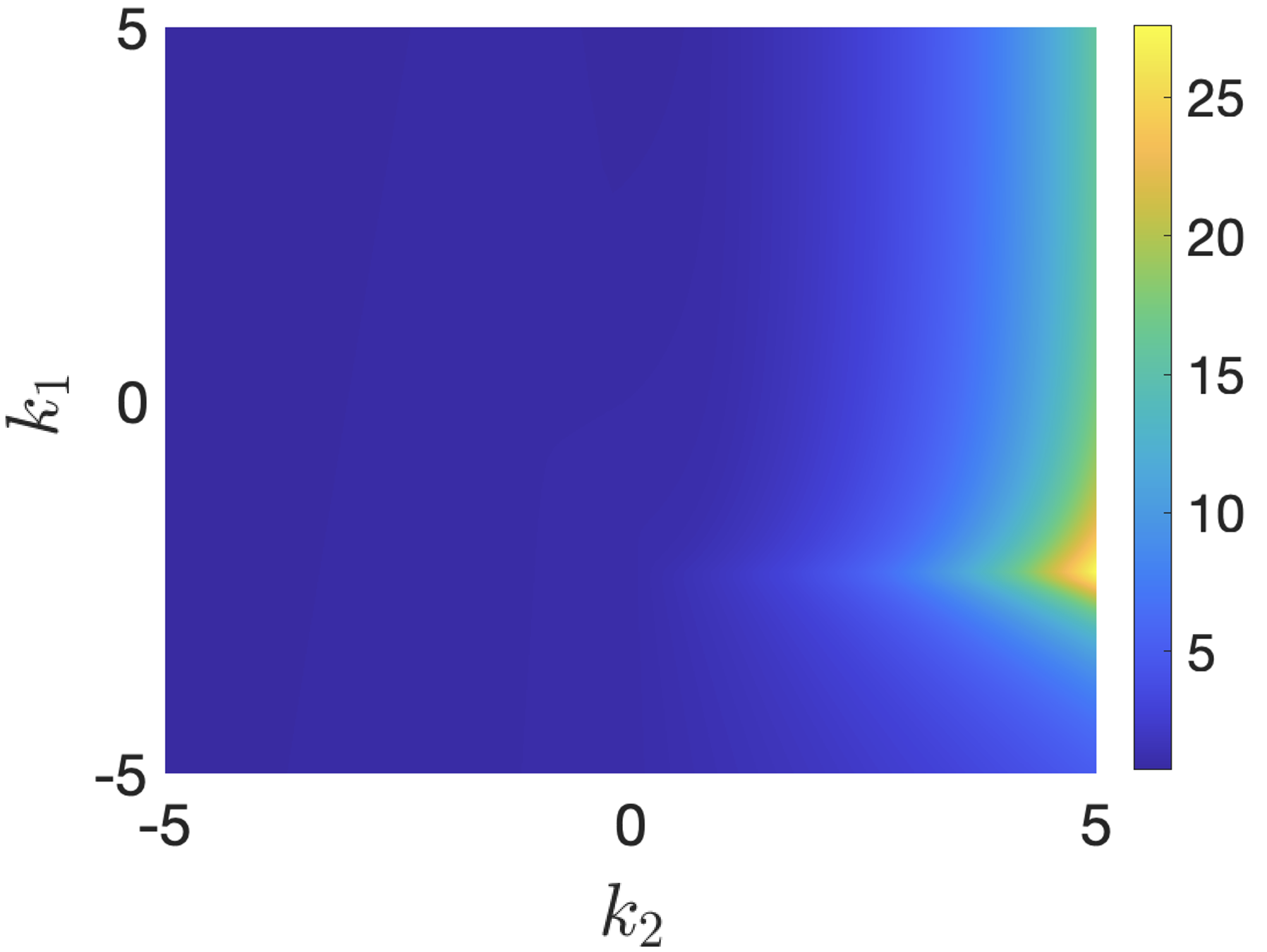}
        \caption{$v$ (with $k_0=-1$)}
        \label{Wang_Figure37}
    \end{subfigure}
    \begin{subfigure}[t]{0.32\textwidth}
        \includegraphics[width=\textwidth]{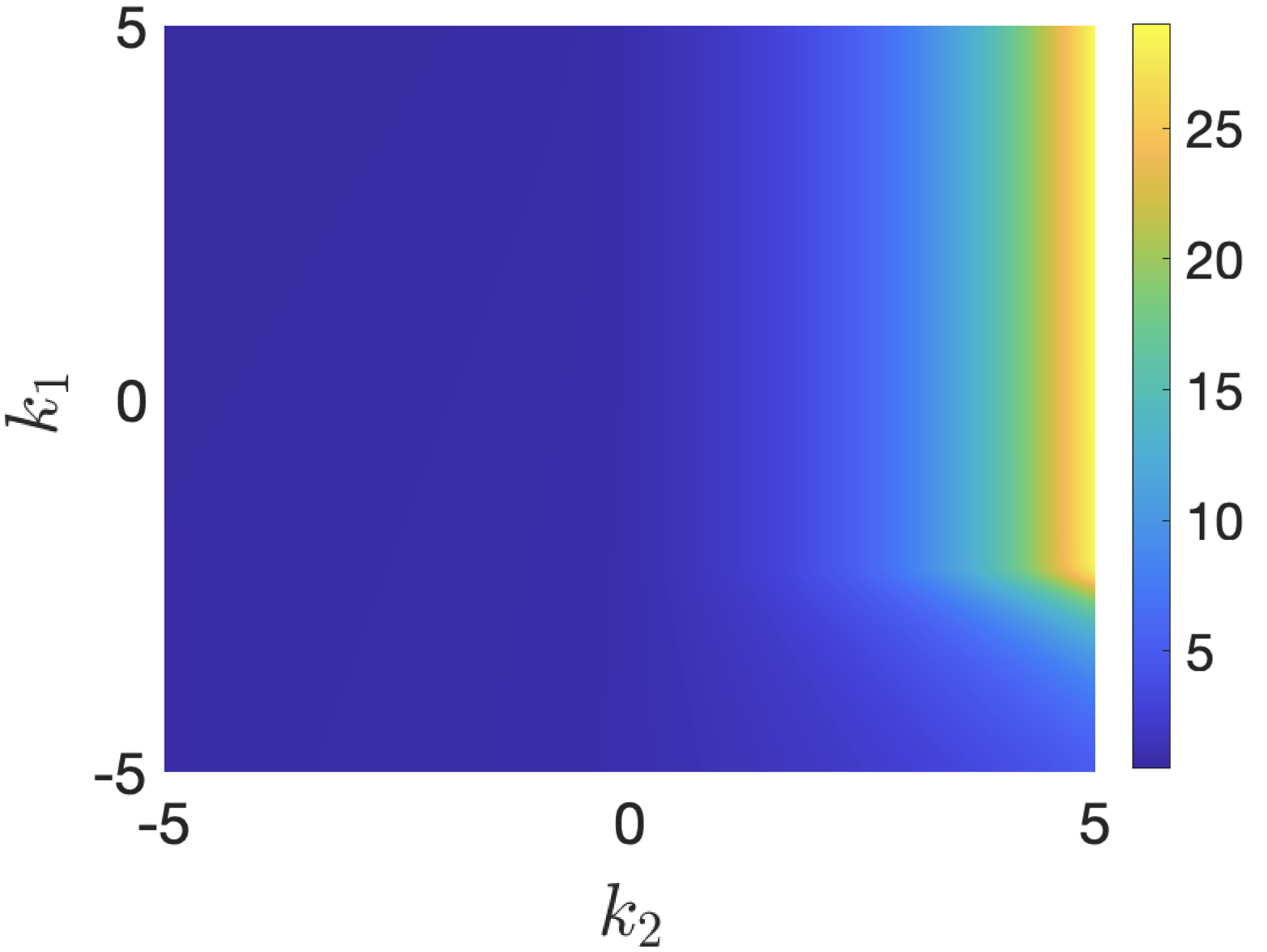}
        \caption{$v$ (with $k_0=0$)}
        \label{Wang_Figure38}
    \end{subfigure}
    \begin{subfigure}[t]{0.32\textwidth}
        \includegraphics[width=\textwidth]{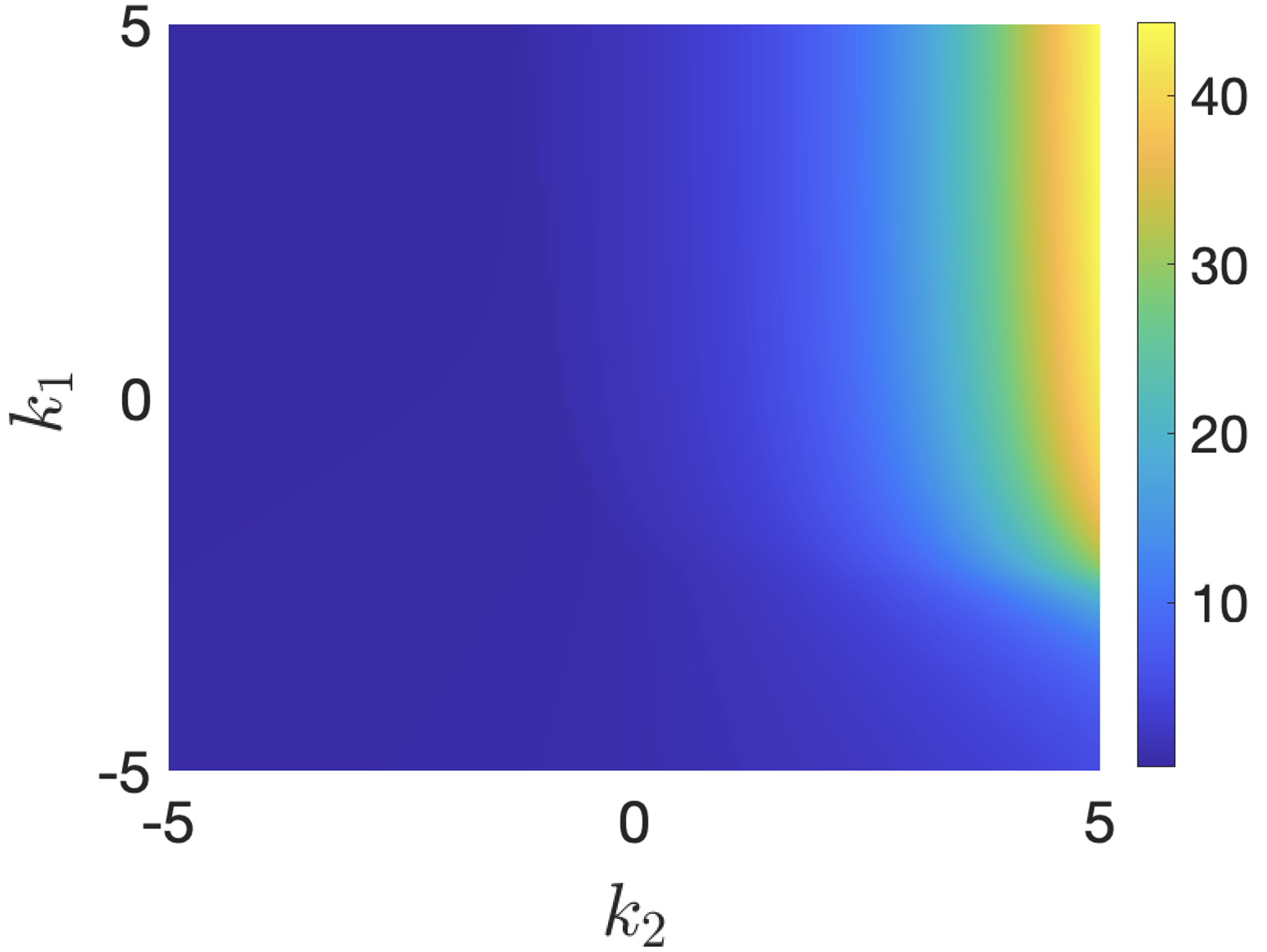}
        \caption{$v$ (with $k_0=+1$)}
        \label{Wang_Figure39}
    \end{subfigure}
    
    \begin{subfigure}[t]{0.32\textwidth}
        \includegraphics[width=\textwidth]{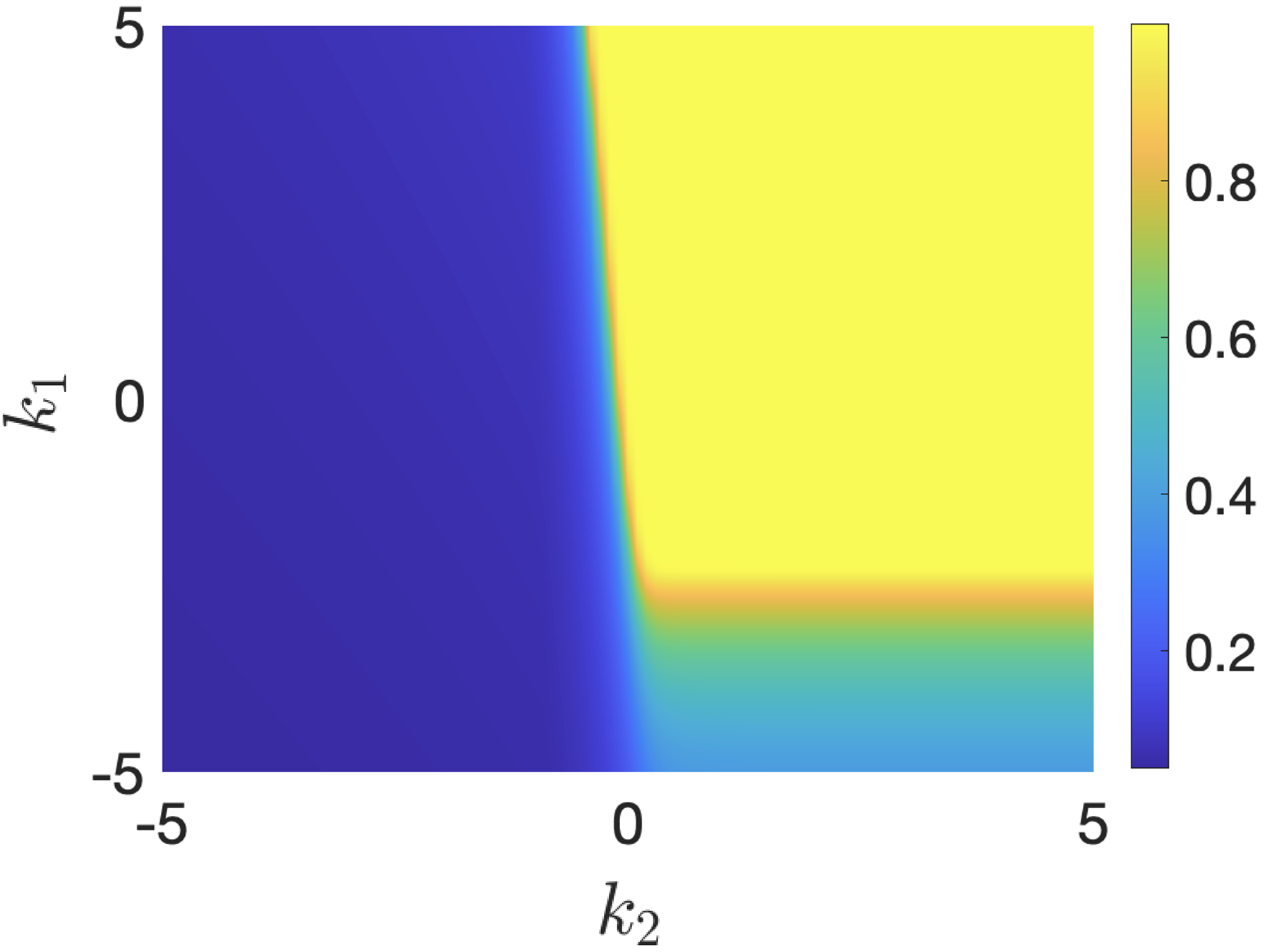}
        \caption{$P$ (with $k_0=-1$)}
        \label{Wang_Figure40}
    \end{subfigure}
    \begin{subfigure}[t]{0.32\textwidth}
        \includegraphics[width=\textwidth]{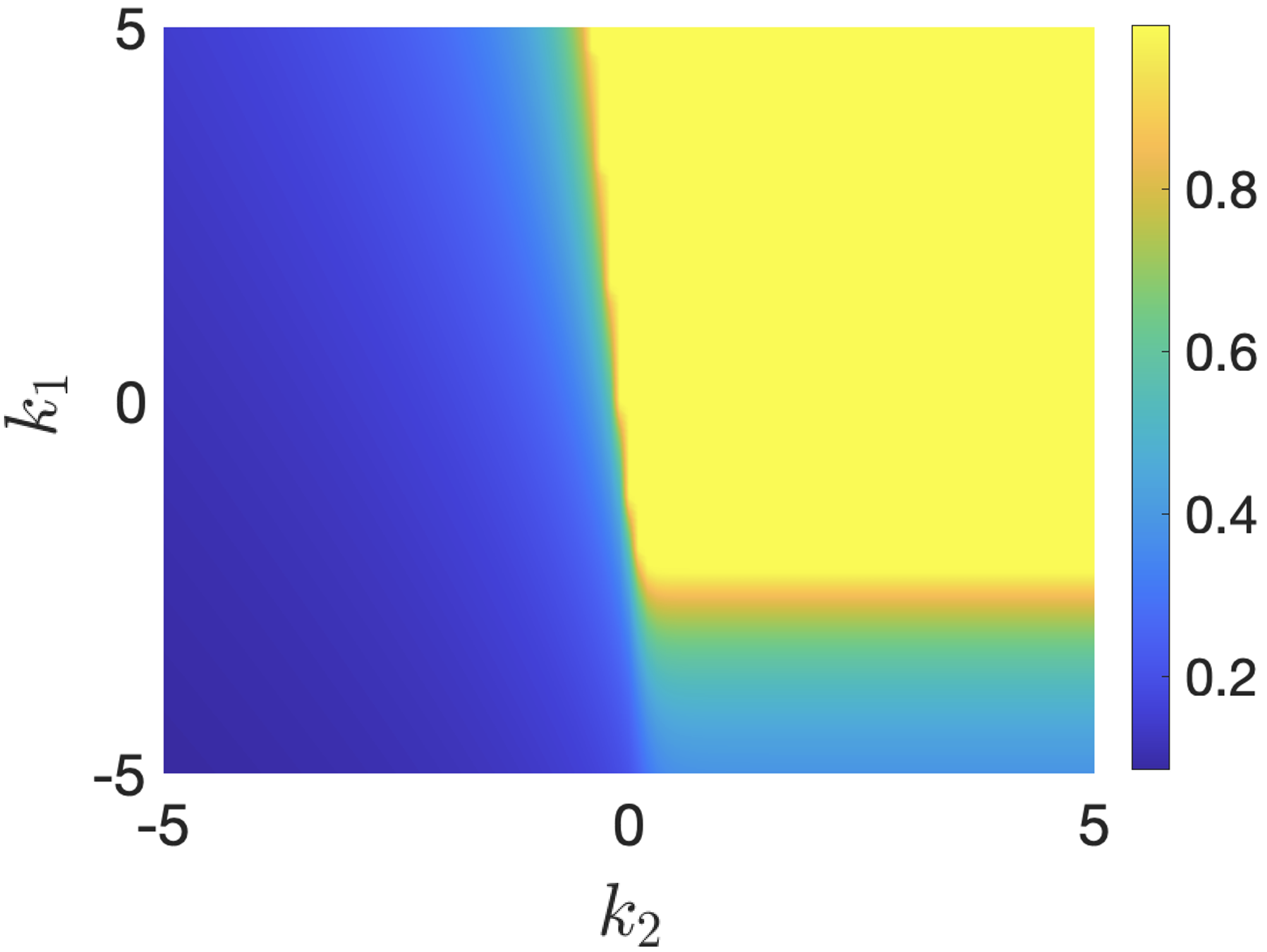}
        \caption{$P$ (with $k_0=0$)}
        \label{Wang_Figure41}
    \end{subfigure}
    \begin{subfigure}[t]{0.32\textwidth}
        \includegraphics[width=\textwidth]{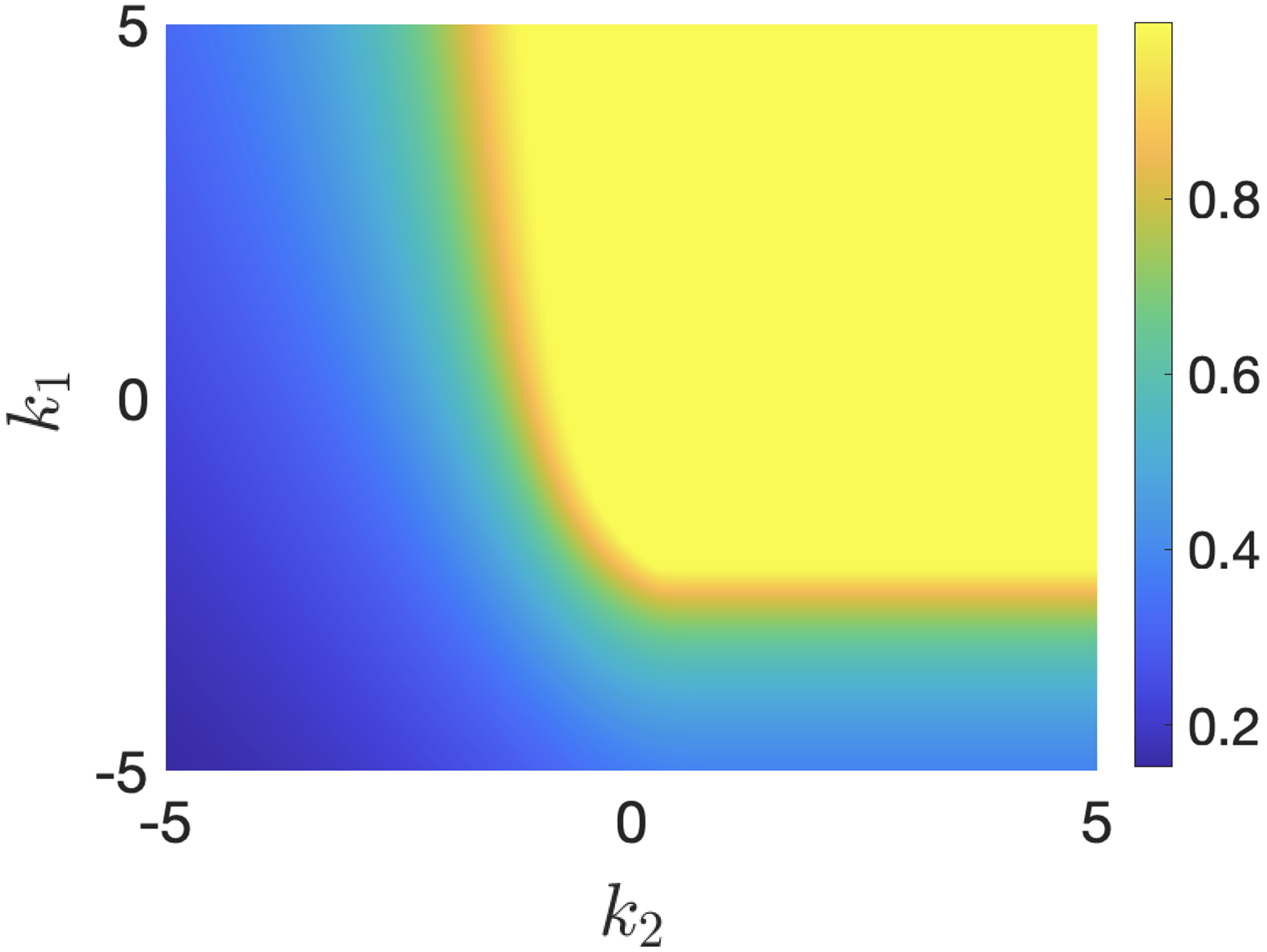}
        \caption{$P$ (with $k_0=+1$)}
        \label{Wang_Figure42}
    \end{subfigure}
    \caption{Residual levels of protest activity (a)-(c), social tension (d)-(f), and police presence (g)-(i) for various $k_0$, $k_1$, and $k_2$ parameter combinations when $t=100$.}
    \label{Wang_Figure43}
\end{figure}

\begin{figure} [H]
    \centering
    \begin{subfigure}[t]{0.4\textwidth}
        \includegraphics[width=\textwidth]{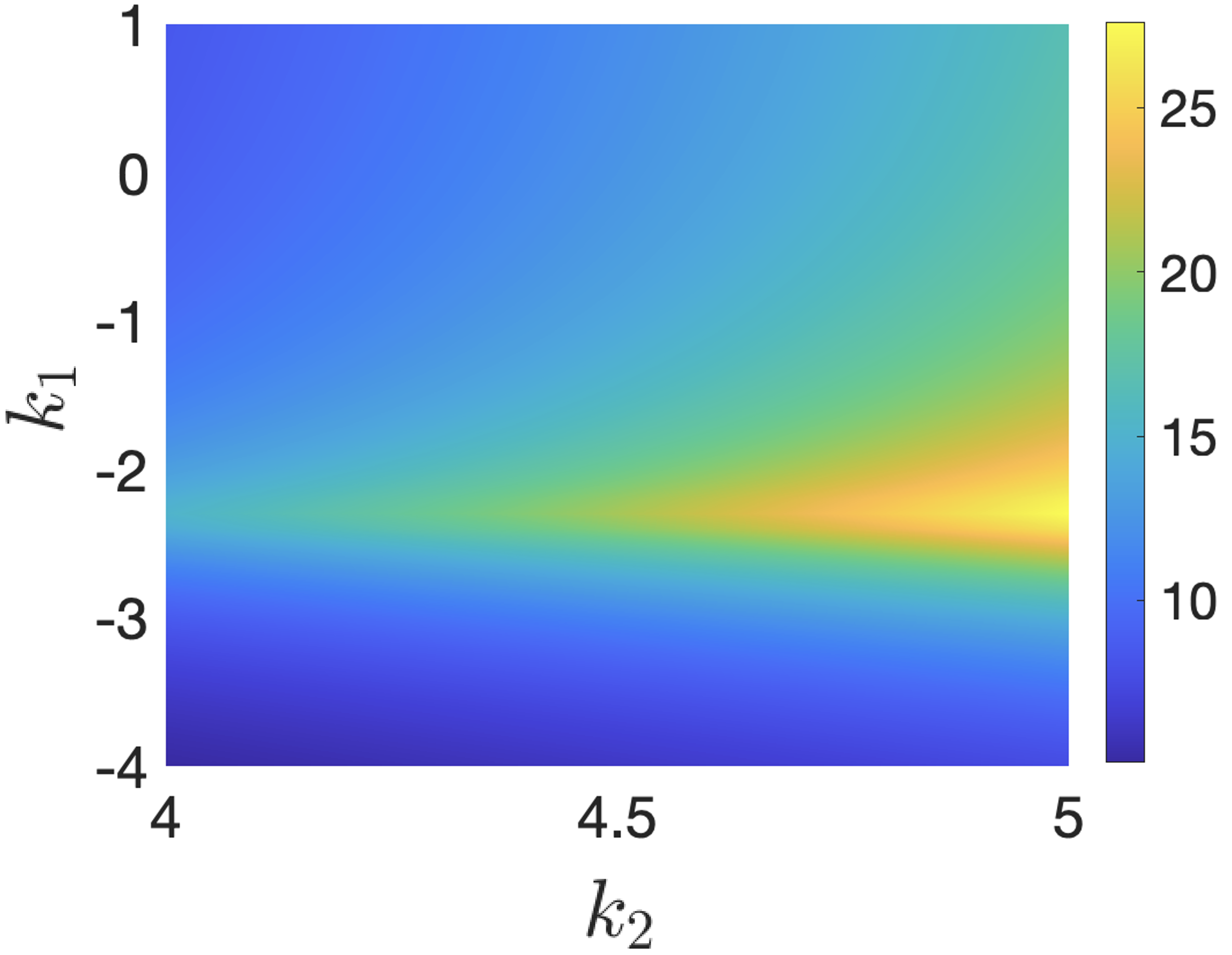}
        \caption{Zoom in of Figure \ref{Wang_Figure37}}
        \label{Wang_Figure44}
    \end{subfigure}
    \begin{subfigure}[t]{0.56\textwidth}
        \includegraphics[width=\textwidth]{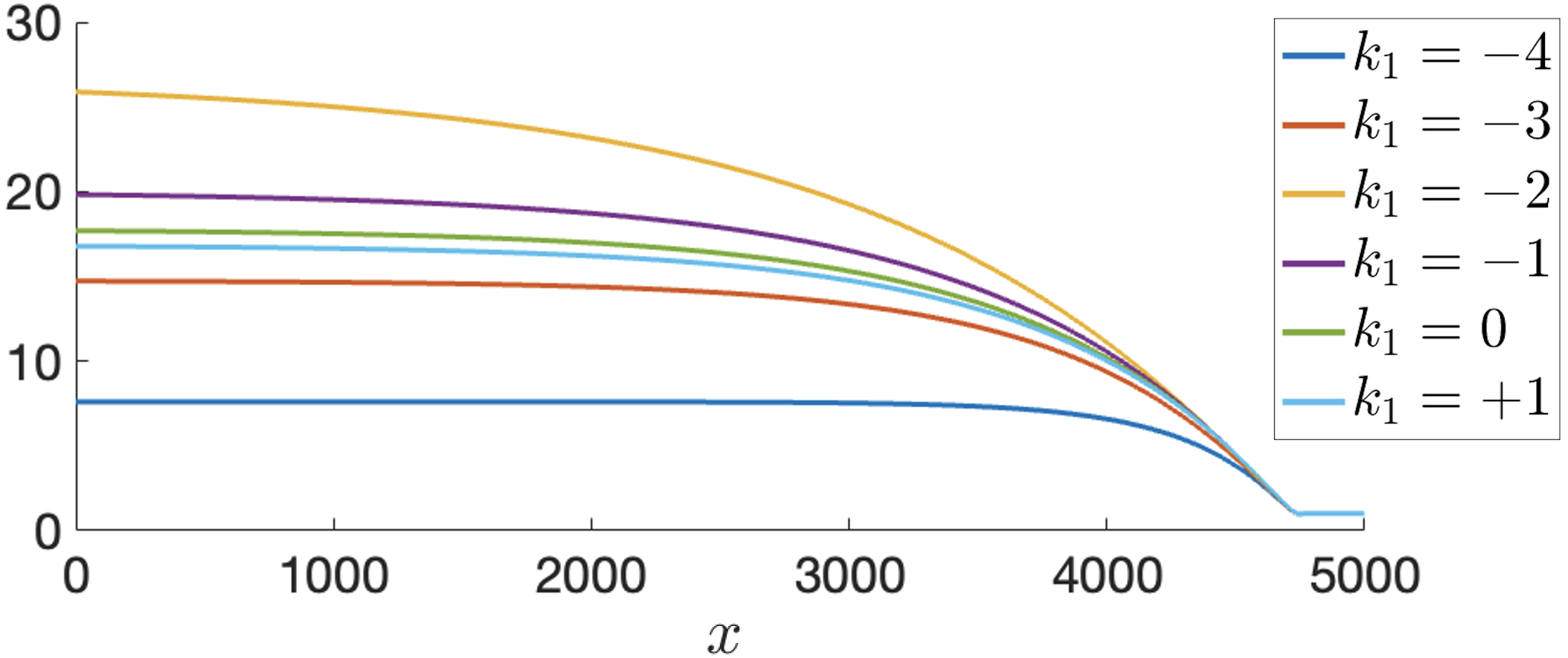}
        \caption{$v$ profiles with $k_0=-1$ and $k_2=+5$ (at $t=95$)}
        \label{Wang_Figure45}
    \end{subfigure}
    \caption{Zoom in on the residual levels of social tension and sample wave profiles.}
    \label{Wang_Figure46}
\end{figure}

In Figure \ref{Wang_Figure44}, a zoomed-in view of Figure \ref{Wang_Figure37} focusing on the third quadrant, reveals that residual values are not monotonic with respect to $k_1$. Within this range, for each fixed value of $k_2$, as $k_1$ increases from -4 to 1, the residual value of $v$ ascends to a local maximum before decreasing. Figure \ref{Wang_Figure45} displays the profiles of $v$ at $t=95$ with $k_2$ fixed at five different values of $k_1$. A comparison of different $v(x,t;k_0,k_1,k_2)$ values shows that $v(0,95;-1,-4,5)<v(0,95;-1,-3,5)<v(0,95;-1,-2,5)$, while $v(0,95;-1,-2,5)>v(0,95;-1,-1,5)>v(0,95;-1,0,5)>v(0,95;-1,1,5)$.
An interpretation of this behavior is that if a protest wave is traversing an environment in which protesting inhibits social tension ($k_0<0$) but police presence enhances social tension ($k_2>0$), then a policing policy aimed at discouraging protests ($k_1<0$) will result in stronger discouragement leading to higher social tension, up to a certain point when the discouragement becomes effective at reducing social tension.

\subsubsection{Cumulative levels of protest activity and social tension}

Perhaps one of the most important metrics is the total exposure of protesting activity and net social tension.  To quantify this metric we look at Area Under the Curve (AUC). These were computed by integrating the area underneath the profiles across space, with the calculation of each profile taking place at the same fixed point in time when wave profiles are formed for all relevant $k$ parameter combinations. Due to the nature of the calculations, the AUC values are affected by maximum values as well residual values, but the plots in Figs \ref{Wang_Figure47}-\ref{Wang_Figure50} more closely resemble their respective residual value plots than the maximum value plots.

In Figure \ref{Wang_Figure47}, we observe two predominant types of results: (1) a substantial AUC value due to $u$ approaching a non-zero residual level, and (2) a more moderate AUC value as $u$ approaches the $u=0$ regime. For all $k_2<0$, the profiles for $u$ demonstrate a narrow pulse-like behavior, indicating that protest activity is concentrated primarily at the invading interface. However, when $k_2>0$ and $k_1$ is sufficiently negative, protest activity extends beyond the invading interface, gradually diminishing to zero as the traveling wave progresses further away (see Figure \ref{Wang_Figure47}, profiles (v) and (vi)).

\begin{figure} [H]
    \centering
        \includegraphics[width=.9\textwidth]{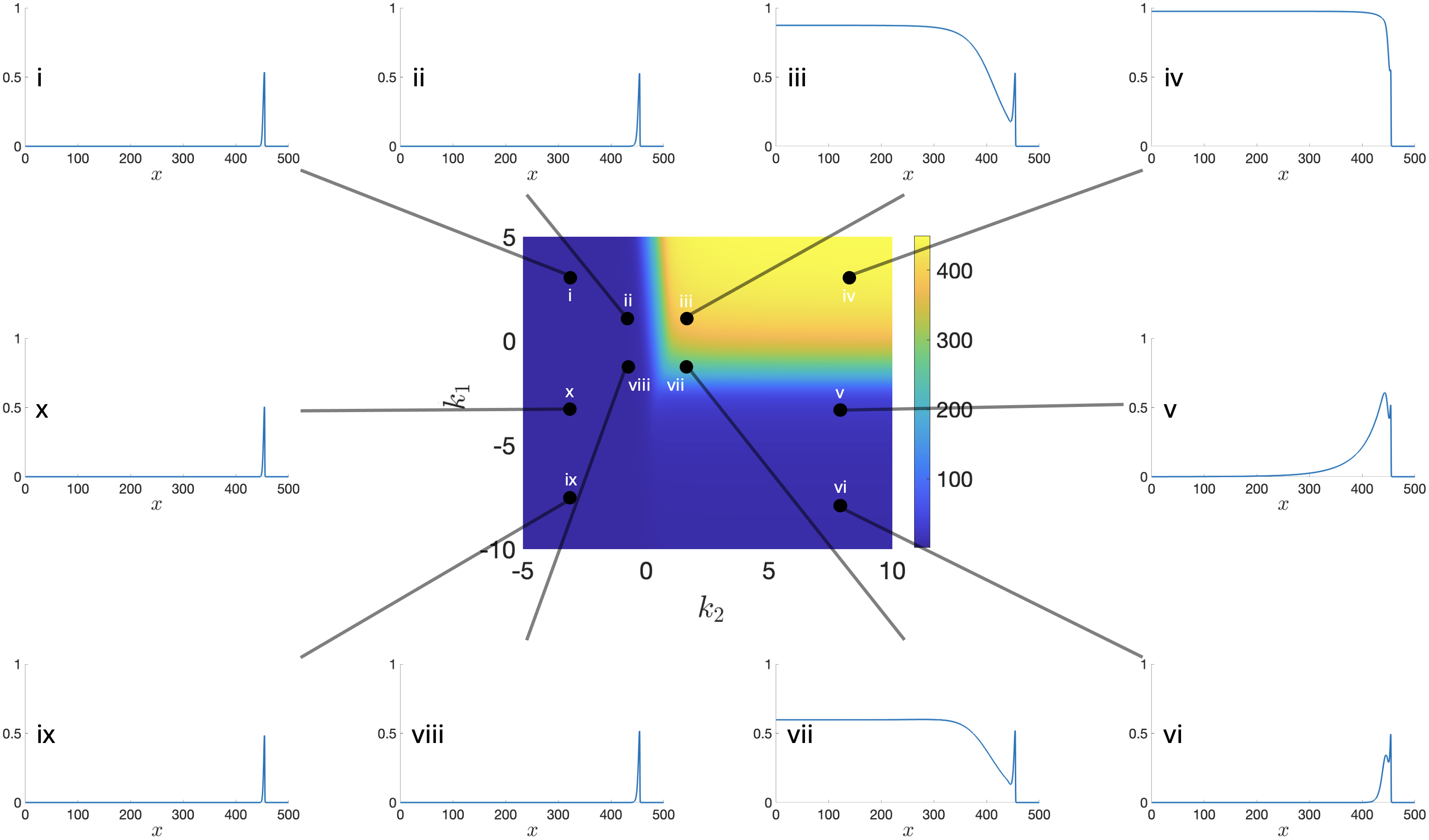}
    \caption{AUC of $u$ with $k_0=-1$ at $t=15$ and profiles at select coordinates. Moving clockwise from the top left corner the coordinates $(k_1,k_2)$ are: i (3,-3), ii (1,-1), iii (1,1), iv (3,8), v (-3,8), vi (-8,8), vii (-1,1), viii (-1,-1), ix (-8,-3), and x (-3,-3).}
    \label{Wang_Figure47}
\end{figure}

\begin{figure} [H]
    \centering
        \includegraphics[width=.9\textwidth]{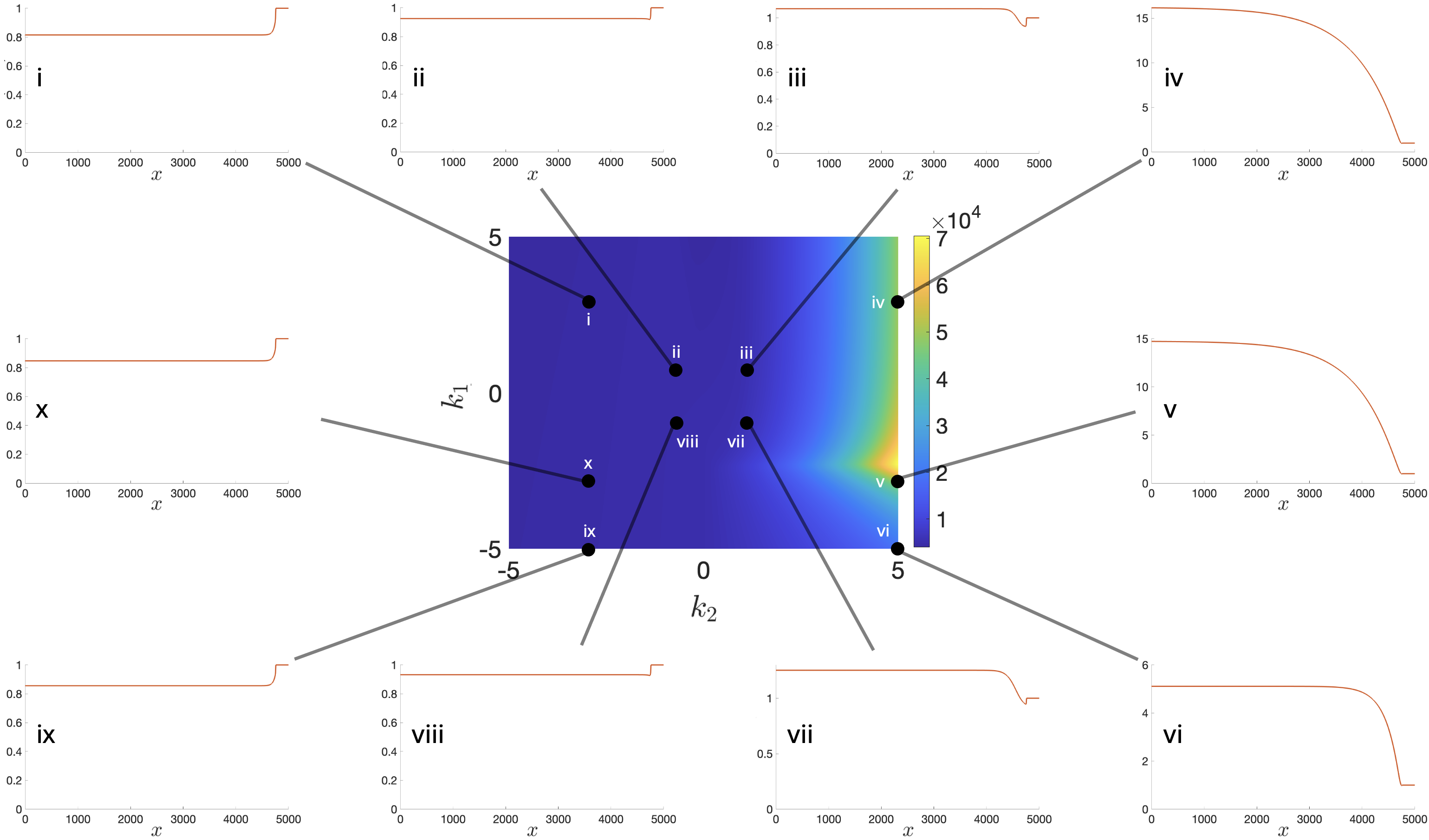}
    \caption{AUC of $v$ with $k_0=-1$ at $t=100$ and profiles at select coordinates (at $t=95$). Moving clockwise from the top left corner the coordinates $(k_1,k_2)$ are: i (3,-3), ii (1,-1), iii (1,1), iv (3,5), v (-3,5), vi (-5,5), vii (-1,1), viii (-1,-1), ix (-5,-3), and x (-3,-3).}
    \label{Wang_Figure48}
\end{figure}

Figure \ref{Wang_Figure48} illustrates the social tension corresponding to the protest activity depicted in Fig \ref{Wang_Figure47}. When $k_0=-1$ and $k_2<0$, which encompasses scenarios where the residual tension is less than one, the reductions in AUC are relatively gradual as both $k_1$ and $k_2$ decrease. In this parameter space, the profiles quickly approach the residual value, attributing the AUC decrease primarily to the diminishing residual value as $k_1$ and $k_2$ decrease. Notably, the heatmap in Figure \ref{Wang_Figure48} reveals a similar nonmonotonic trend in AUC as observed in the residual values in Figure \ref{Wang_Figure37}.

\begin{figure} [H]
    \centering
        \includegraphics[width=.9\textwidth]{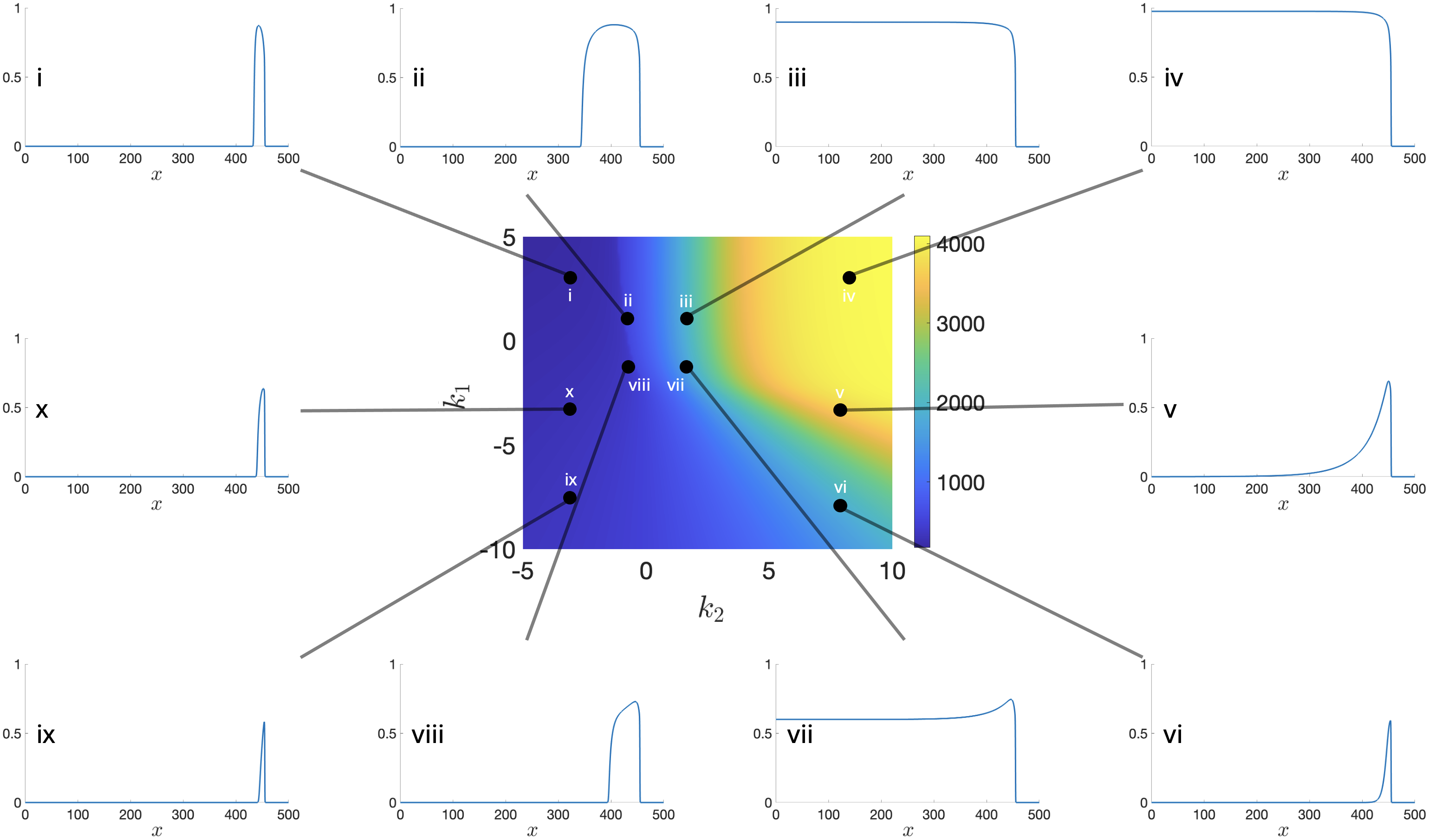}
    \caption{AUC of $u$ with $k_0=1$ at $t=15$ and profiles at select coordinates. Moving clockwise from the top left corner the coordinates $(k_1,k_2)$ are: i (3,-3), ii (1,-1), iii (1,1), iv (3,8), v (-3,8), vi (-8,8), vii (-1,1), viii (-1,-1), ix (-8,-3), and x (-3,-3).}
    \label{Wang_Figure49}
\end{figure}

The heatmap for $k_0=1$ in Figure \ref{Wang_Figure49} reveals a more diverse range of behaviors than the predominantly observed two types (narrow pulse-like profiles of $u$ and profiles trending towards non-zero steady states) in the $k_0=-1$ heatmap depicted in Figure \ref{Wang_Figure47}. While profiles can still be categorized as near-pulse or non-pulse, the near-pulse waves, particularly when $k_1>0$, may exhibit plateaus near $u=1$ for a significant portion of the $x$ domain before dropping sharply towards zero. This plateauing behavior results in a relatively large AUC value, even though the residual value is roughly zero. Generally, in comparison to the $k_0=-1$ scenario, protest activity is not confined solely to the invading interface when $k_0=1$, leading to a higher overall level of protest activity.

\begin{figure} [H]
    \centering
        \includegraphics[width=\textwidth]{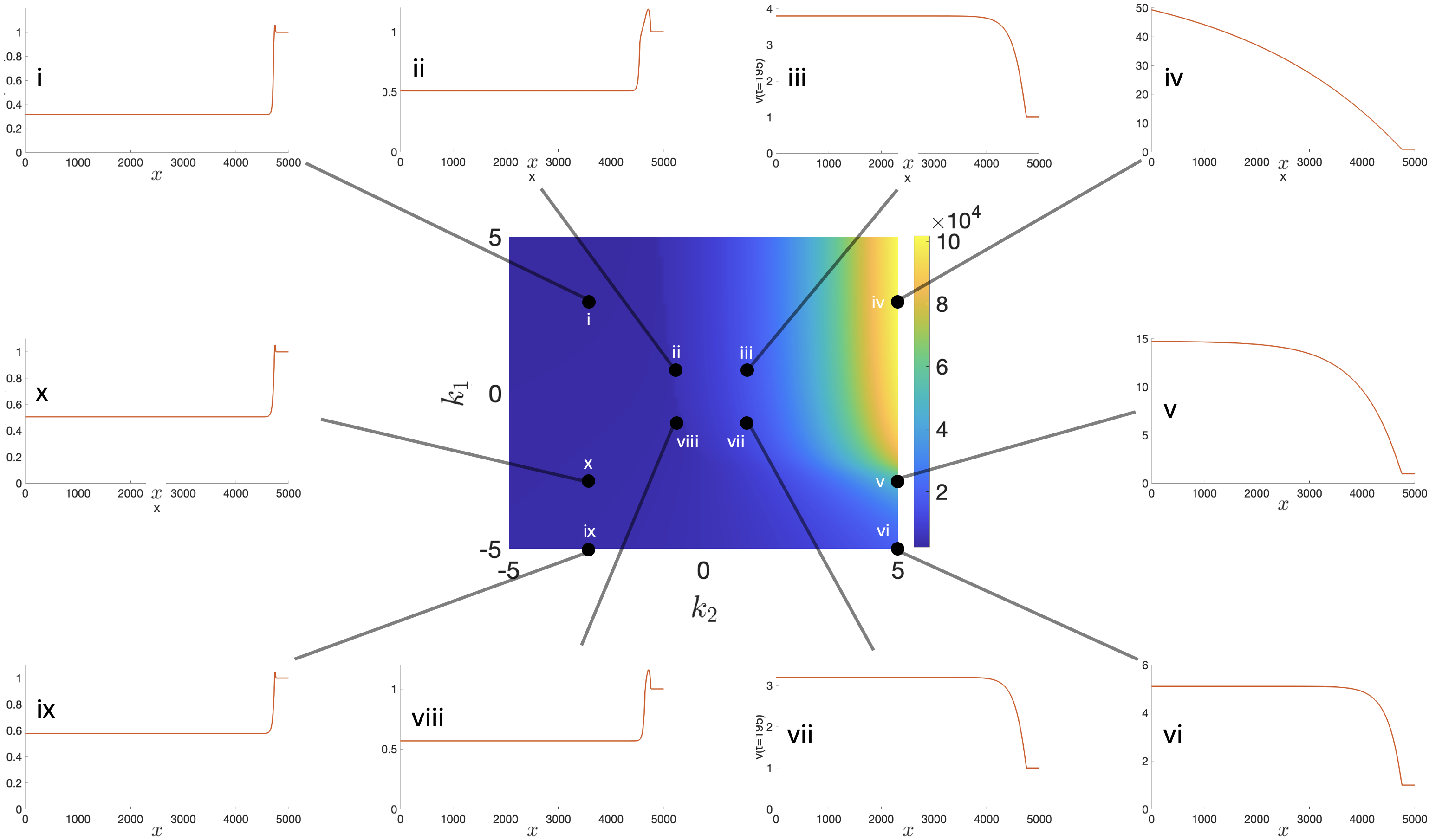}
    \caption{AUC of $v$ with $k_0=1$ at $t=100$ and profiles at select coordinates (at $t=95$). Moving clockwise from the top left corner the coordinates $(k_1,k_2)$ are: i (3,-3), ii (1,-1), iii (1,1), iv (3,5), v (-3,5), vi (-5,5), vii (-1,1), viii (-1,-1), ix (-5,-3), and x (-3,-3).}
    \label{Wang_Figure50}
\end{figure}


Figures \ref{Wang_Figure49} and \ref{Wang_Figure50} emphasize the significance of considering qualitative behaviors in conjunction with quantitative measurements. Solely focusing on maximum protest levels and global protest levels might overlook crucial insights into what is experienced as a protest wave approaches and passes a fixed point.

\subsection{Qualitative Outcomes}

In this section, we delve into the qualitative outcomes of simulations, considering the viewpoint of a resident fixed at a specific location. Taking the perspective of a town or neighborhood, the encountered activity varies significantly based on the environment (parameters) through which the protesting wave passes. The fluctuations in protest activity, tension, and police presence play a crucial role in shaping the experience of residents. They may witness scenarios where there is no protest activity, followed by a sudden surge, only to witness a rapid and complete cessation of all activity. Alternatively, there may be instances of no protest activity, a sudden surge, followed by a swift decline before resurging and persisting at levels even higher than the initial invading levels. Although these experiences may initially appear similar, they can evolve into vastly different systems. Given the diverse outcomes, we have identified a few key features and the specific regions where these features manifest.

\subsubsection{Productive and unproductive strategies for relieving tension}

Arguably, the most crucial metric for assessing the effectiveness of protests and policing strategies involves comparing tension levels before and after the protests. In an environment characterized by a baseline tension level that experiences a wave of protest activity, evaluating the productivity of the protests and policing strategy in alleviating or exacerbating the underlying tension is vital. This assessment is made by comparing tension levels before and after the complete passage of the wave of protests through a region. We quantified residual tension and, in Figures \ref{Wang_Figure33} to \ref{Wang_Figure62}, juxtaposed these residual levels against the initial level ($v(x,0)=1$). Regions, where the wave of activity escalated social tension, are shaded purple, while those where tension decreased immediately and persistently are shaded green. If the tension increases at some point before ultimately being successfully reduced, the region is shaded light blue.  We denote the quantity $v_0$ to be the initial tension and $v_{ss}$ to be the difference between the residual tension and $v_0$.

\begin{figure} [H]
    \centering
    \begin{subfigure}[t]{0.6\textwidth}
        \includegraphics[width=\textwidth]{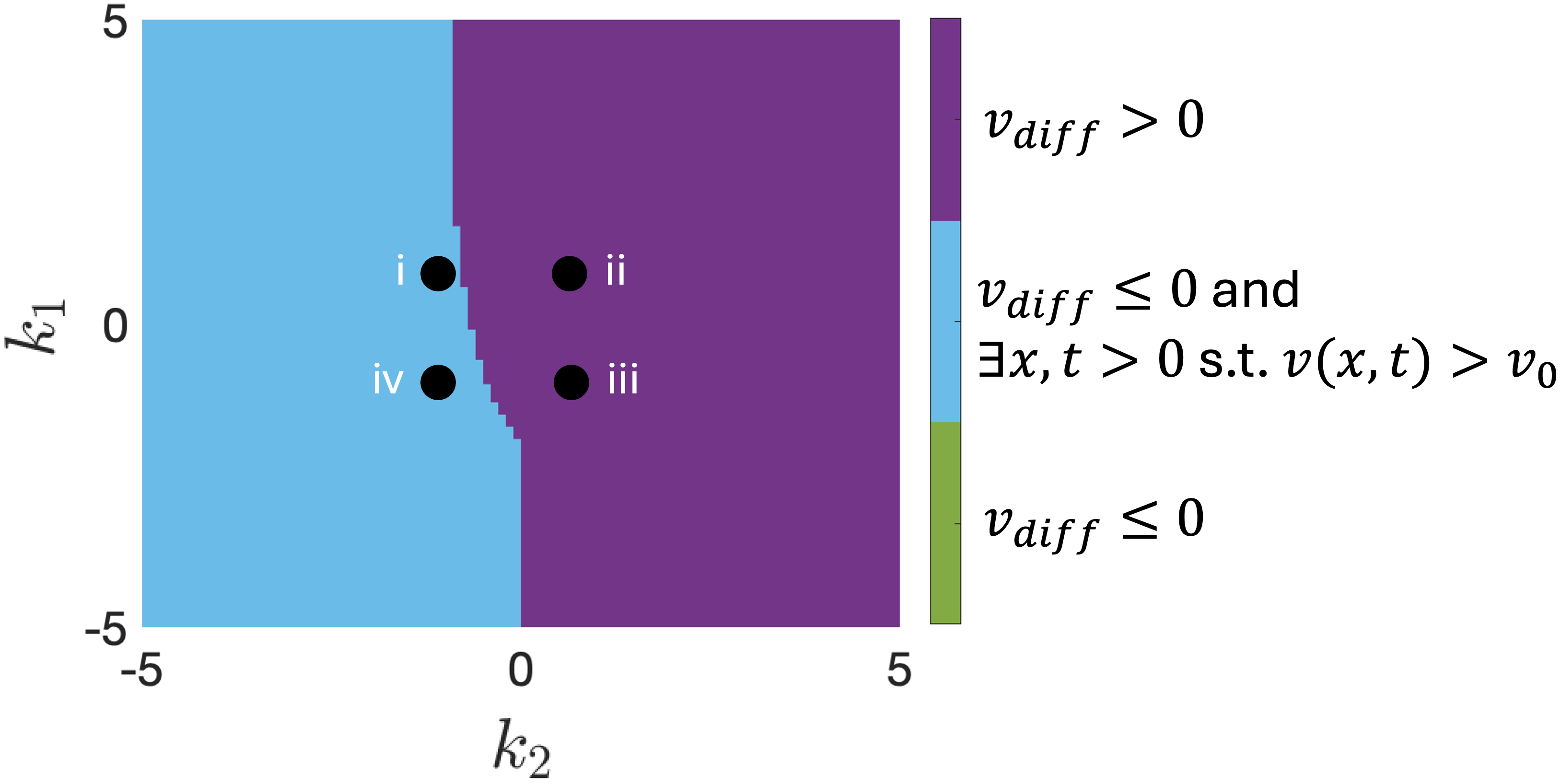}
        \caption{$k_0=1$}
        \label{Wang_Figure51}
    \end{subfigure}
    
    \begin{subfigure}[t]{0.24\textwidth}
        \includegraphics[width=\textwidth]{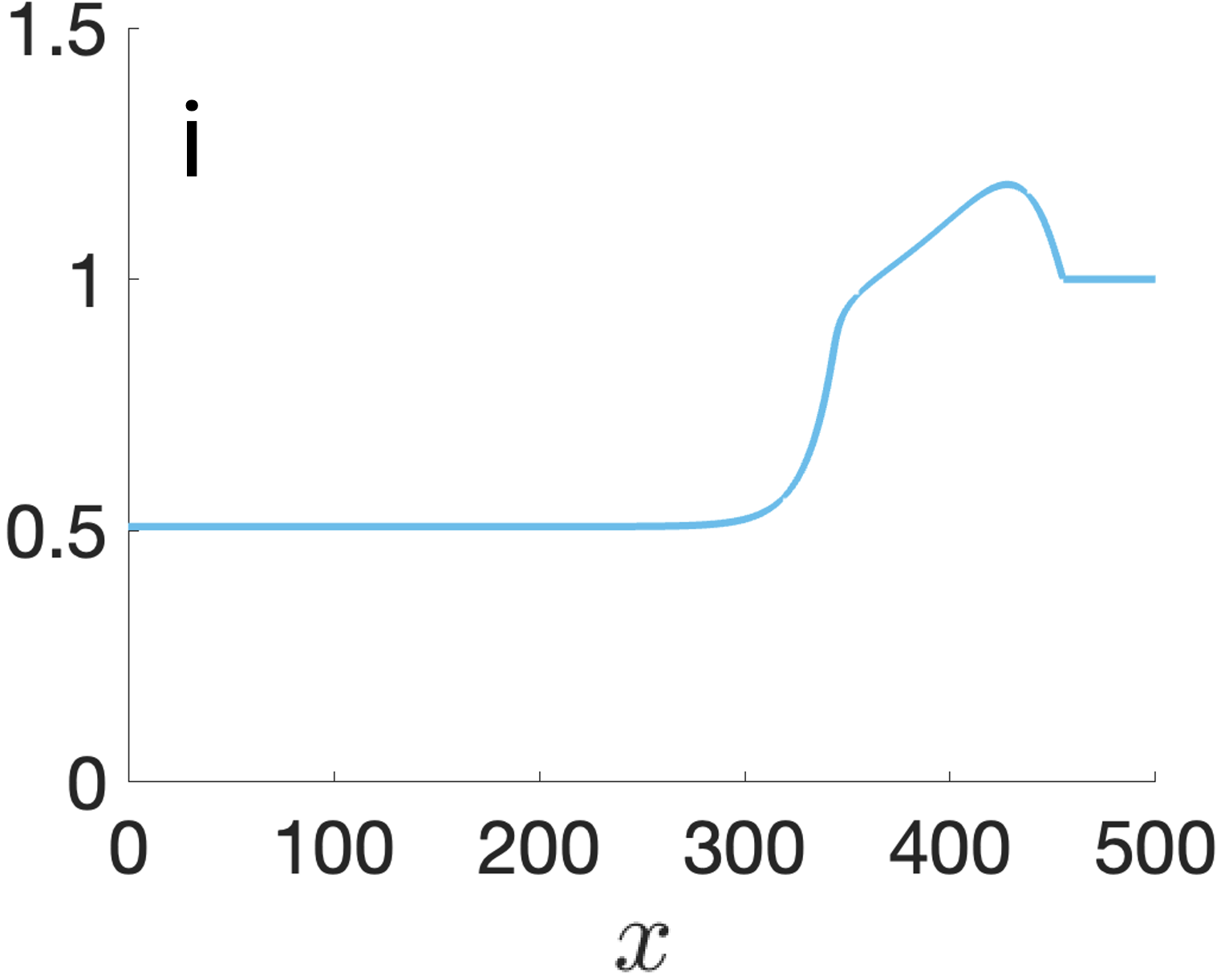}
        \caption{$k_1=1,k_2=-1$}
        \label{Wang_Figure52}
    \end{subfigure}
    \begin{subfigure}[t]{0.24\textwidth}
        \includegraphics[width=\textwidth]{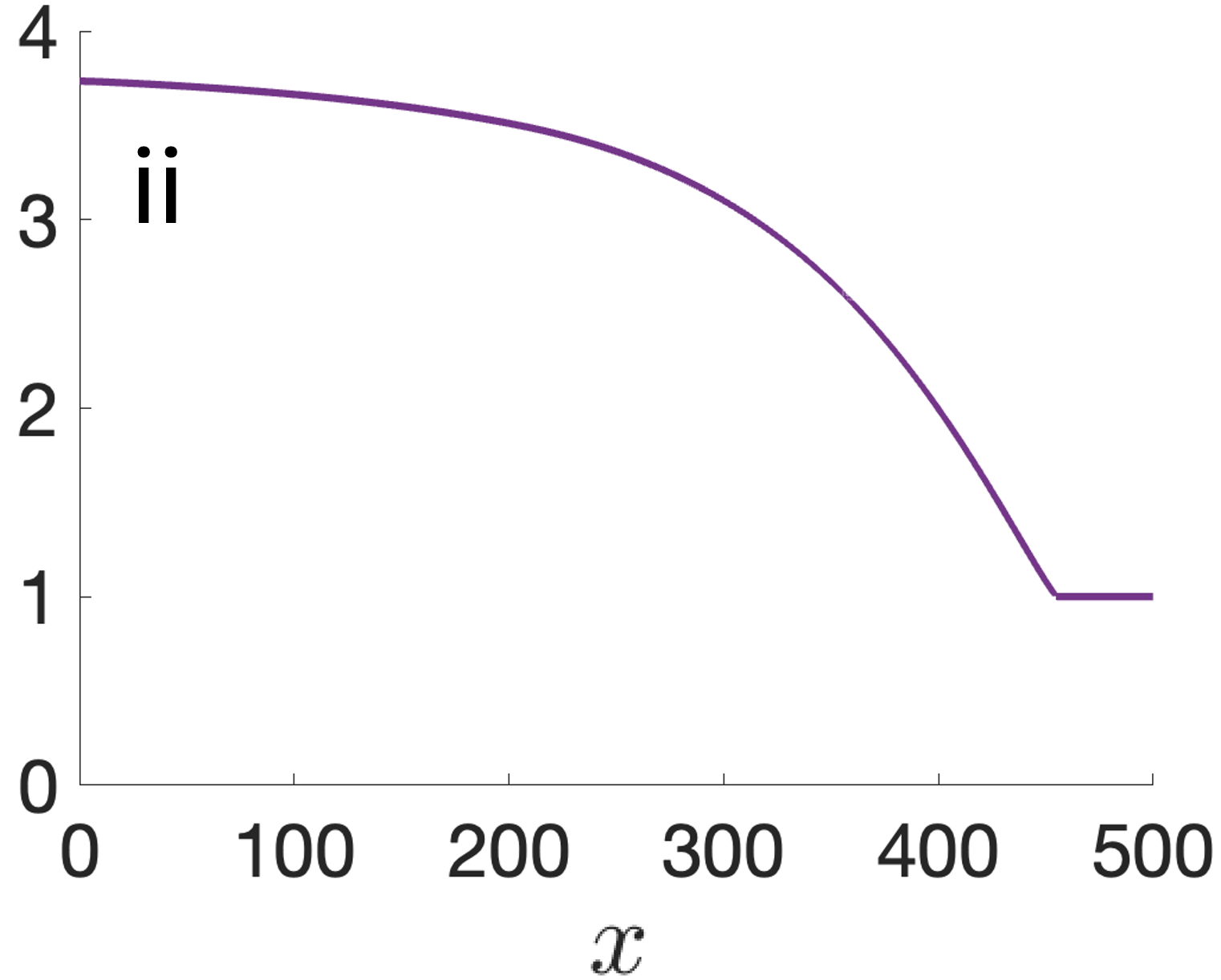}
        \caption{$k_1=1,k_2=1$}
        \label{Wang_Figure53}
    \end{subfigure}
    \begin{subfigure}[t]{0.24\textwidth}
        \includegraphics[width=\textwidth]{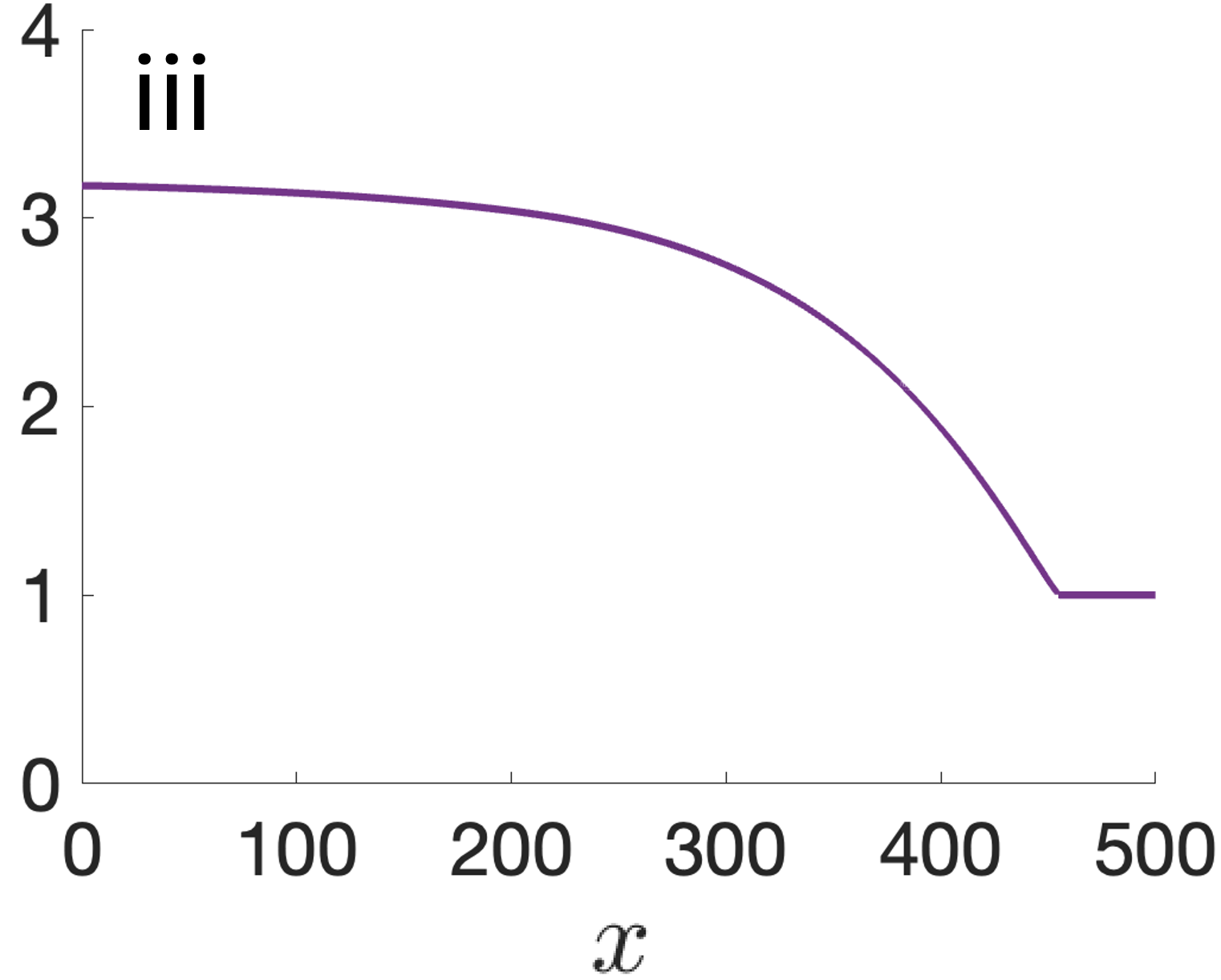}
        \caption{$k_1=-1,k_2=1$}
        \label{Wang_Figure54}
    \end{subfigure}
    \begin{subfigure}[t]{0.24\textwidth}
        \includegraphics[width=\textwidth]{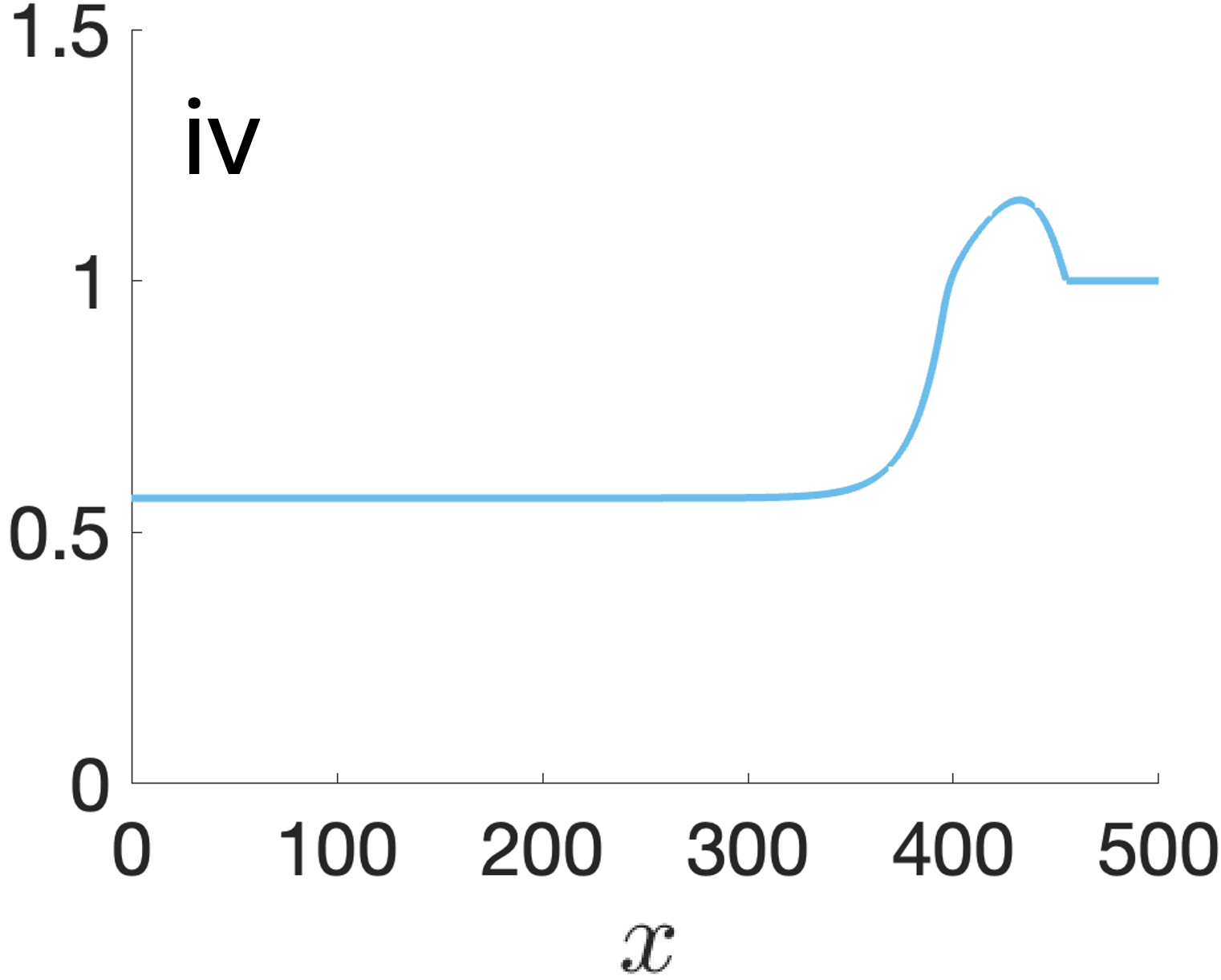}
        \caption{$k_1=-1,k_2=-1$}
        \label{Wang_Figure55}
    \end{subfigure}
    \caption{(a) Categorization of regions in parameter space $(k_1,k_2) \in [-5,5] \times [-5,5]$ when $k=1$ into two categories: (1) where tension is relieved ($v_{diff} \le 0$--light blue) and (2) where it is exacerbated ($v_{diff} > 0$--purple). (b)-(e) illustrate sample tension profiles from (a) with the dots and labels indicating the $(k_1,k_2)$ coordinates (b)-(e) are from. Note: there is no green region in this figure.}
    \label{Wang_Figure56}
\end{figure}

In the scenario where $k_0>0$ (exemplified in Figure \ref{Wang_Figure56}), there is an immediate uptick in tension following the invading wave of protest activity. After this escalation, two outcomes emerge: (1) tension continues to rise monotonically until reaching a steady state with a higher tension in the system than the initial state (depicted in purple in Figure \ref{Wang_Figure51} with example profiles in Figures \ref{Wang_Figure53} and \ref{Wang_Figure54}), and (2) tension attains a local maximum, after which it decreases monotonically until reaching a steady state with lower tension than the initial state (illustrated in light blue in Figure \ref{Wang_Figure51} with example profiles in Figures \ref{Wang_Figure52} and \ref{Wang_Figure55}). If $k_2>0$, no value of $k_1$ can prevent a tension increase at the steady state. If $k_2$ is slightly negative, even with $k_1>0$, a residual tension increase occurs, and some $k_1<0$ values lead to residual tension increases as well. Figure \ref{Wang_Figure39} demonstrates that residual tension monotonically increases with both $k_1$ and $k_2$.

\begin{figure} [H]
    \centering
    \begin{subfigure}[t]{0.6\textwidth}
        \includegraphics[width=\textwidth]{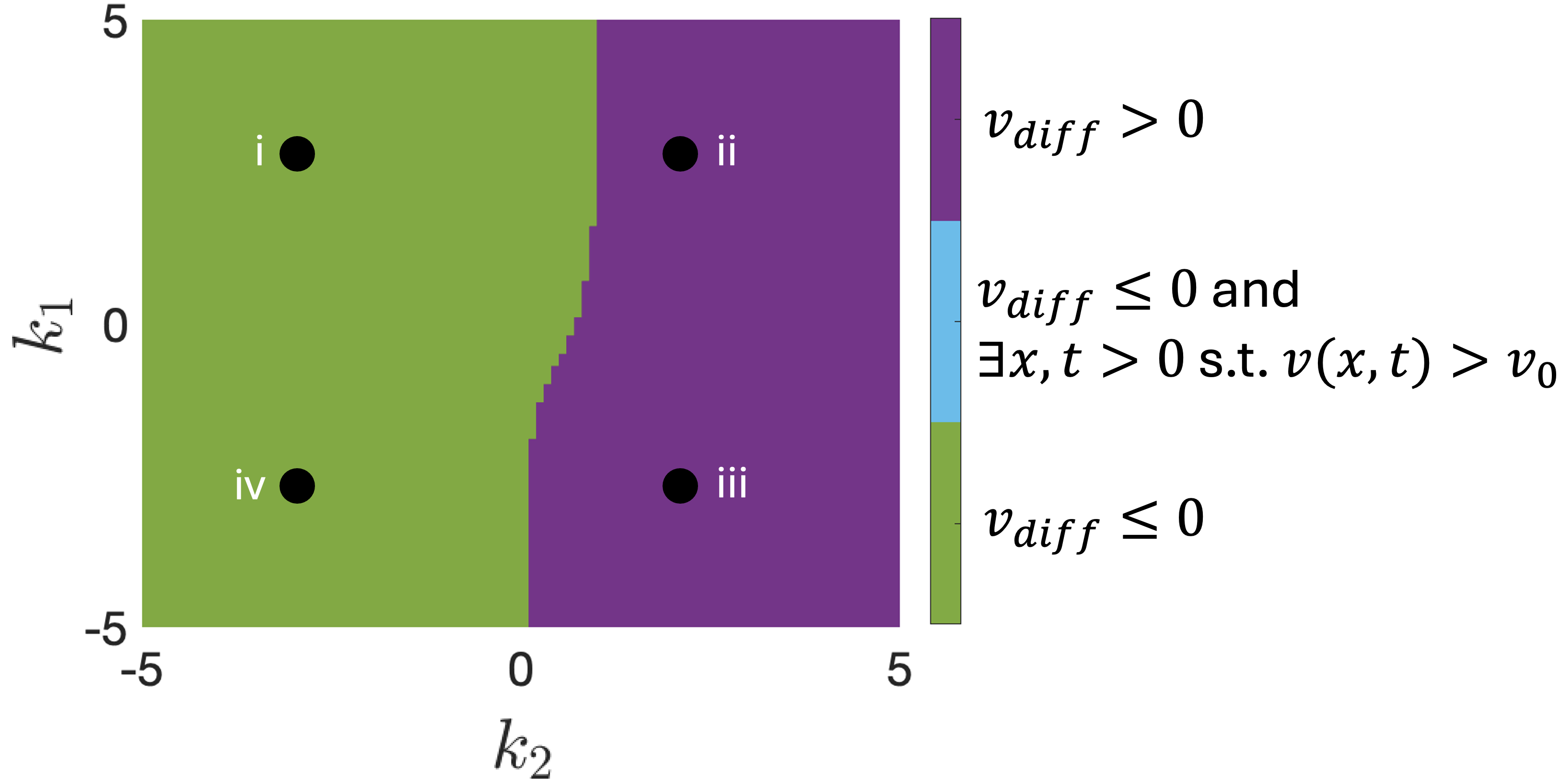}
        \caption{$k_0=-1$}
        \label{Wang_Figure57}
    \end{subfigure}
    
    \begin{subfigure}[t]{0.24\textwidth}
        \includegraphics[width=\textwidth]{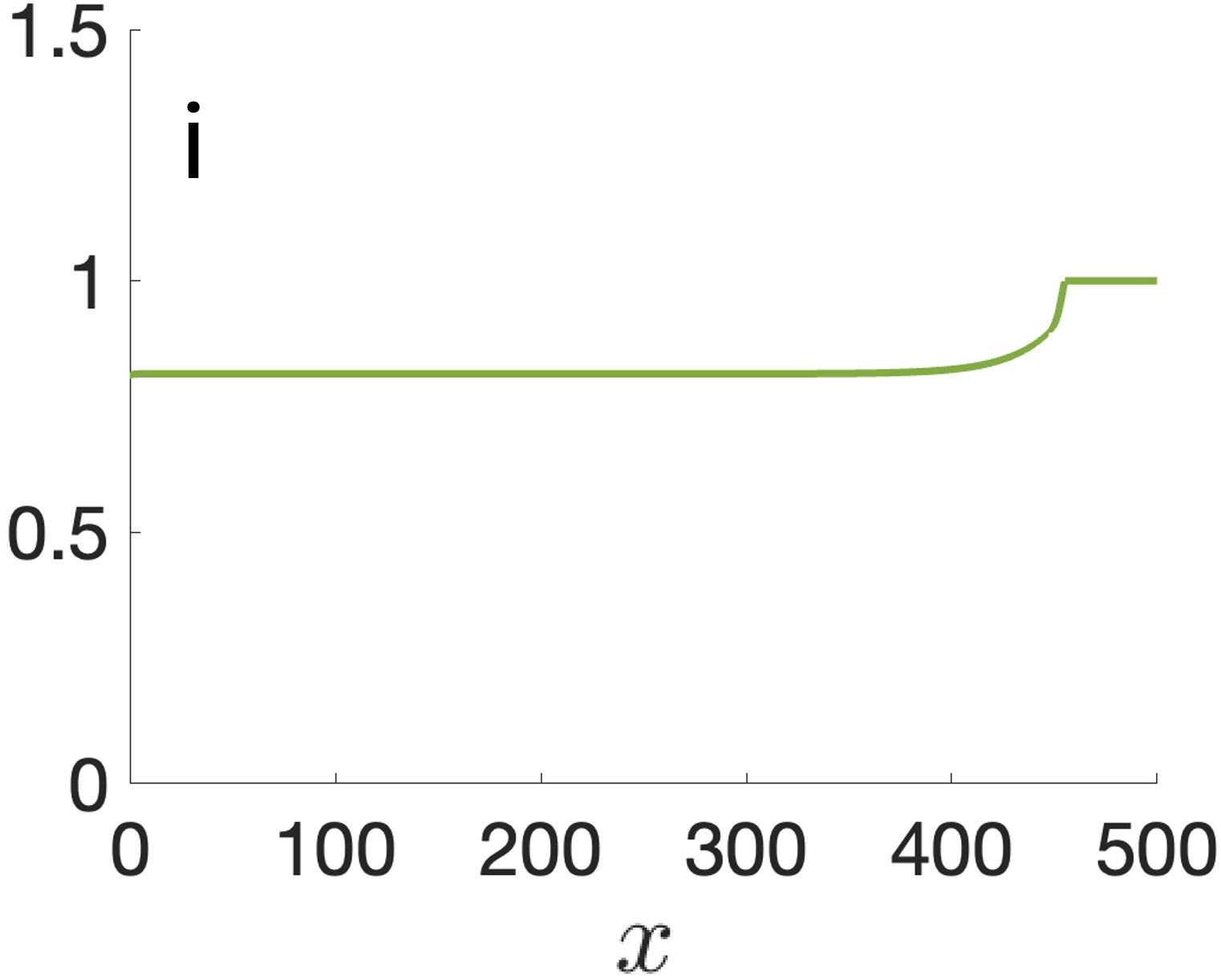}
        \caption{$v(k_1=3,k_2=-3)$}
        \label{Wang_Figure58}
    \end{subfigure}
    \begin{subfigure}[t]{0.24\textwidth}
        \includegraphics[width=\textwidth]{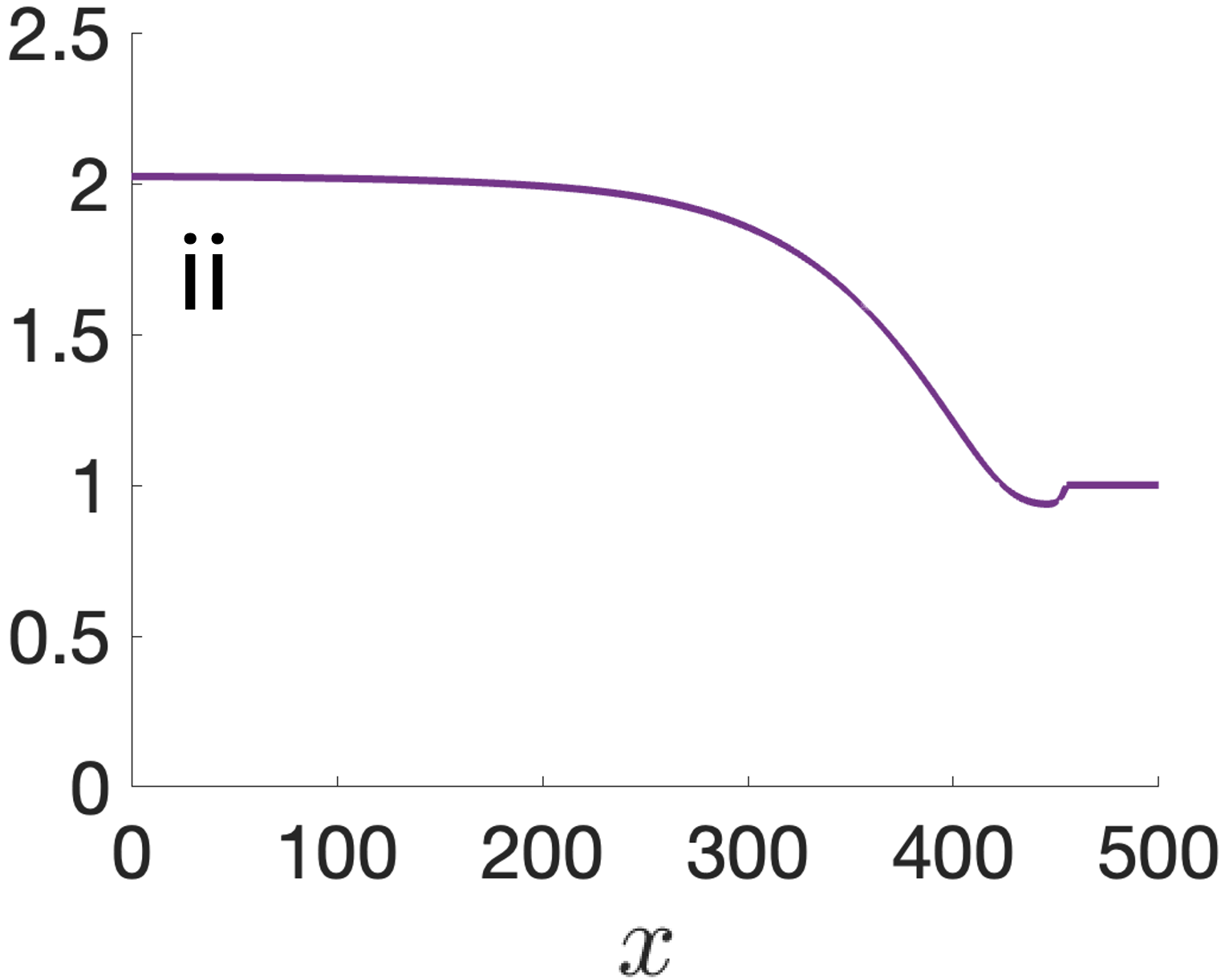}
        \caption{$k_1=3,k_2=2$}
        \label{Wang_Figure59}
    \end{subfigure}
    \begin{subfigure}[t]{0.24\textwidth}
        \includegraphics[width=\textwidth]{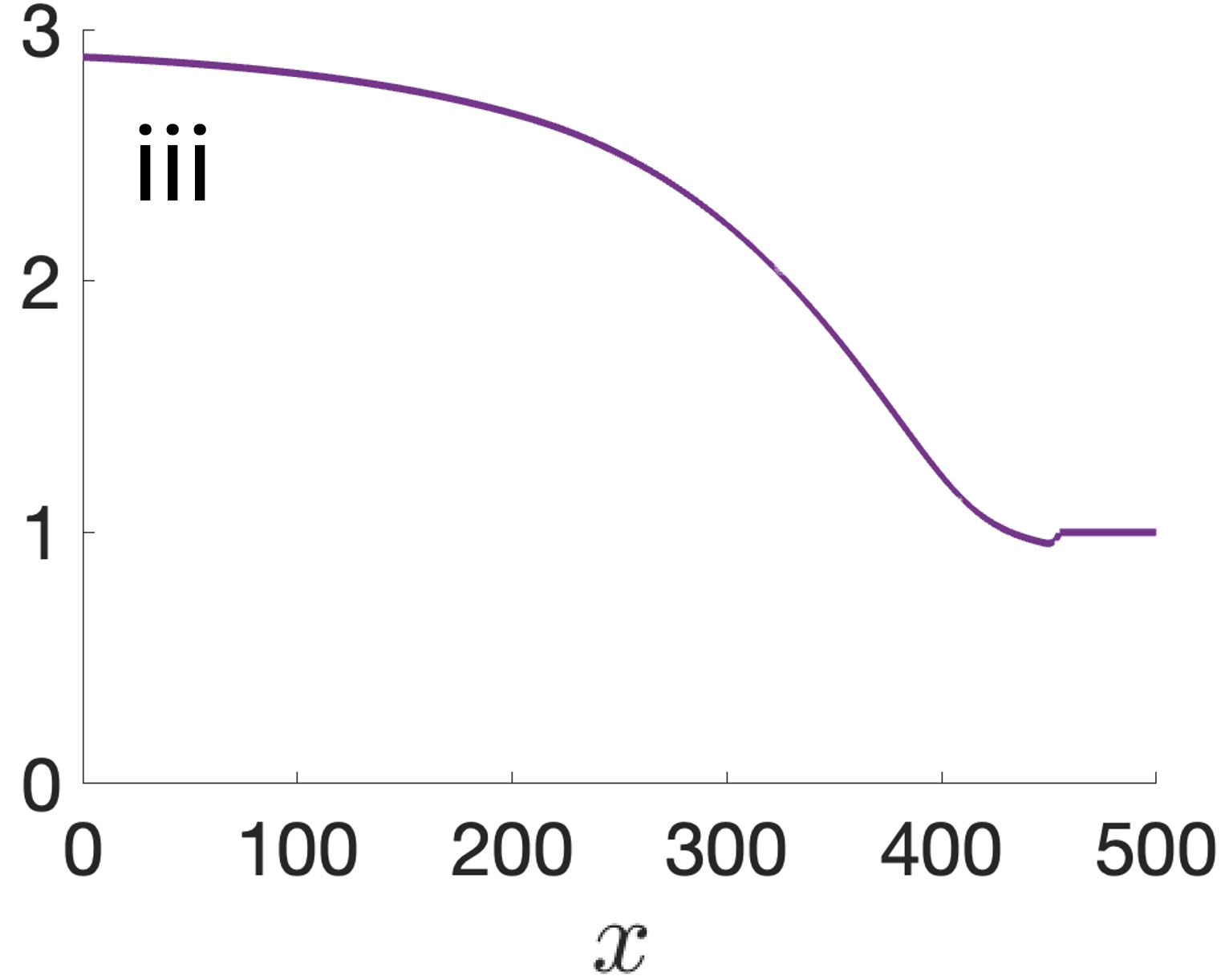}
        \caption{$k_1=-3,k_2=2$}
        \label{Wang_Figure60}
    \end{subfigure}
    \begin{subfigure}[t]{0.24\textwidth}
        \includegraphics[width=\textwidth]{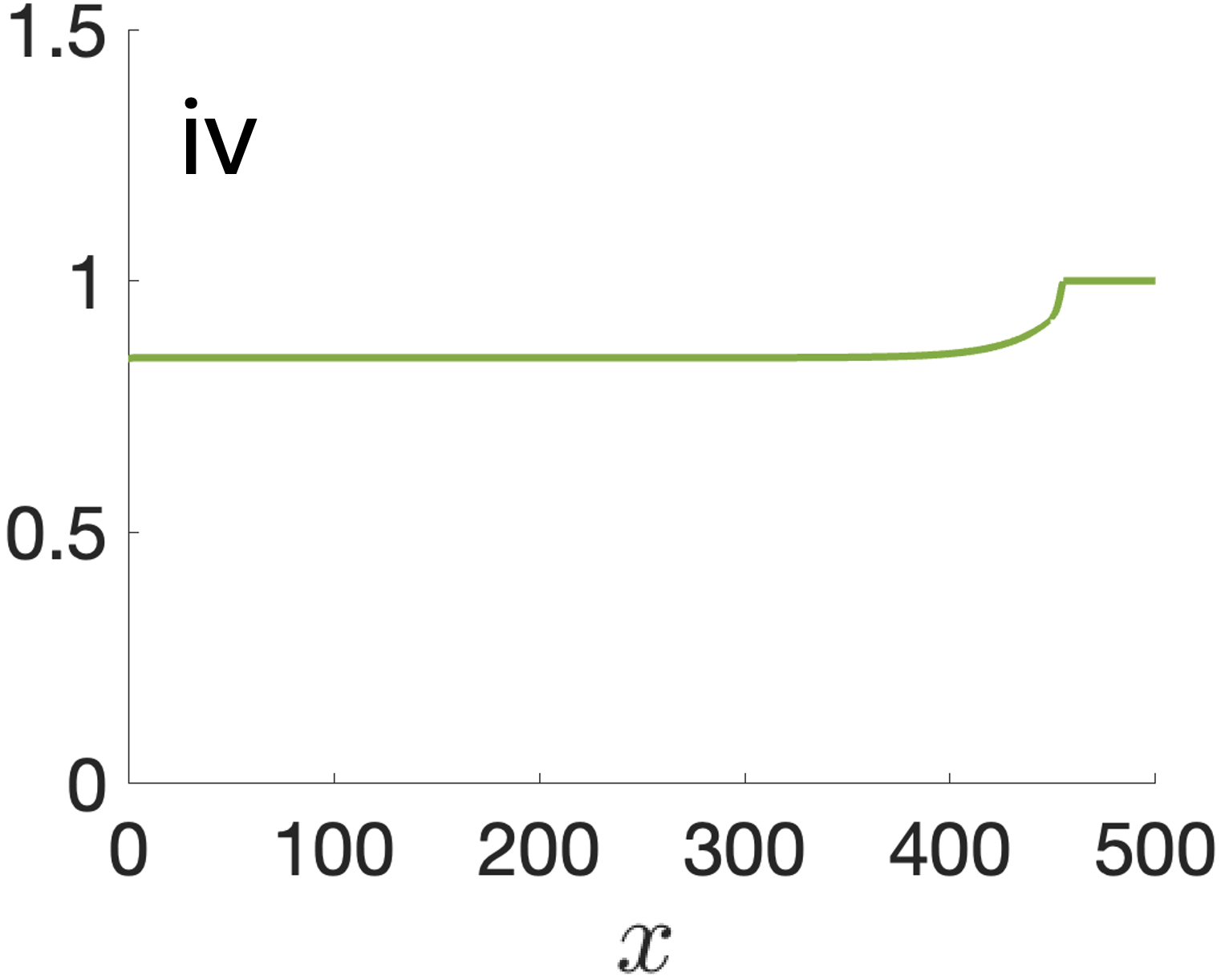}
        \caption{$k_1=-3,k_2=-3$}
        \label{Wang_Figure61}
    \end{subfigure}
    \caption{(a) Categorization of parameter regions $(k_1,k_2) \in [-5,5] \times [-5,5]$ $k_0=-1$ into two categories: (1) where tension is relieved ($v_{diff} \le 0$--green) and where it is exacerbated ($v_{diff} > 0$--purple). (b)-(e) illustrate sample tension profiles from (a) with the dots and labels indicating the $(k_1,k_2)$ coordinates (b)-(e) are from. Note: there is no light blue region in this figure.}
    \label{Wang_Figure62}
\end{figure}

The system when $k_0<0$ (as seen in Figure \ref{Wang_Figure62}) appears to exhibit two possible outcomes: (1) immediate and ultimate relief of tension (indicated in green in Figure \ref{Wang_Figure57}) or (2) initial abatement followed by an ultimate increase in tension compared to the initial levels (depicted in purple in Figure \ref{Wang_Figure57}). When $k_2<0$, the value of $k_1$ becomes irrelevant; tension will invariably be relieved. Additionally, if $k_2>0$, tension will consistently increase unless the magnitude of $k_2$ is small and $k_1$ is positive or nearly positive. Moreover, the profiles of $v$ (shown in Figures \ref{Wang_Figure58} to \ref{Wang_Figure61}) exhibit qualitative similarities in terms of their shapes, regardless of whether they belong to the green or purple region. The primary distinction lies in the residual values, as illustrated in Figure \ref{Wang_Figure37}.

\subsubsection{Location of maximum levels of protest activity}

In Figures \ref{Wang_Figure33} and \ref{Wang_Figure43}, we are aware of the protest levels at the invading interface and the residual levels remaining. From a local standpoint, it becomes pertinent to discern whether the level observed at the invading interface will be the peak level experienced, with protesting reaching higher levels before diminishing, or if the protest activity will persistently escalate to a maximum as $t \rightarrow \infty$.

\begin{figure} [H]
    \centering
    \begin{subfigure}[t]{0.5\textwidth}
        \includegraphics[width=\textwidth]{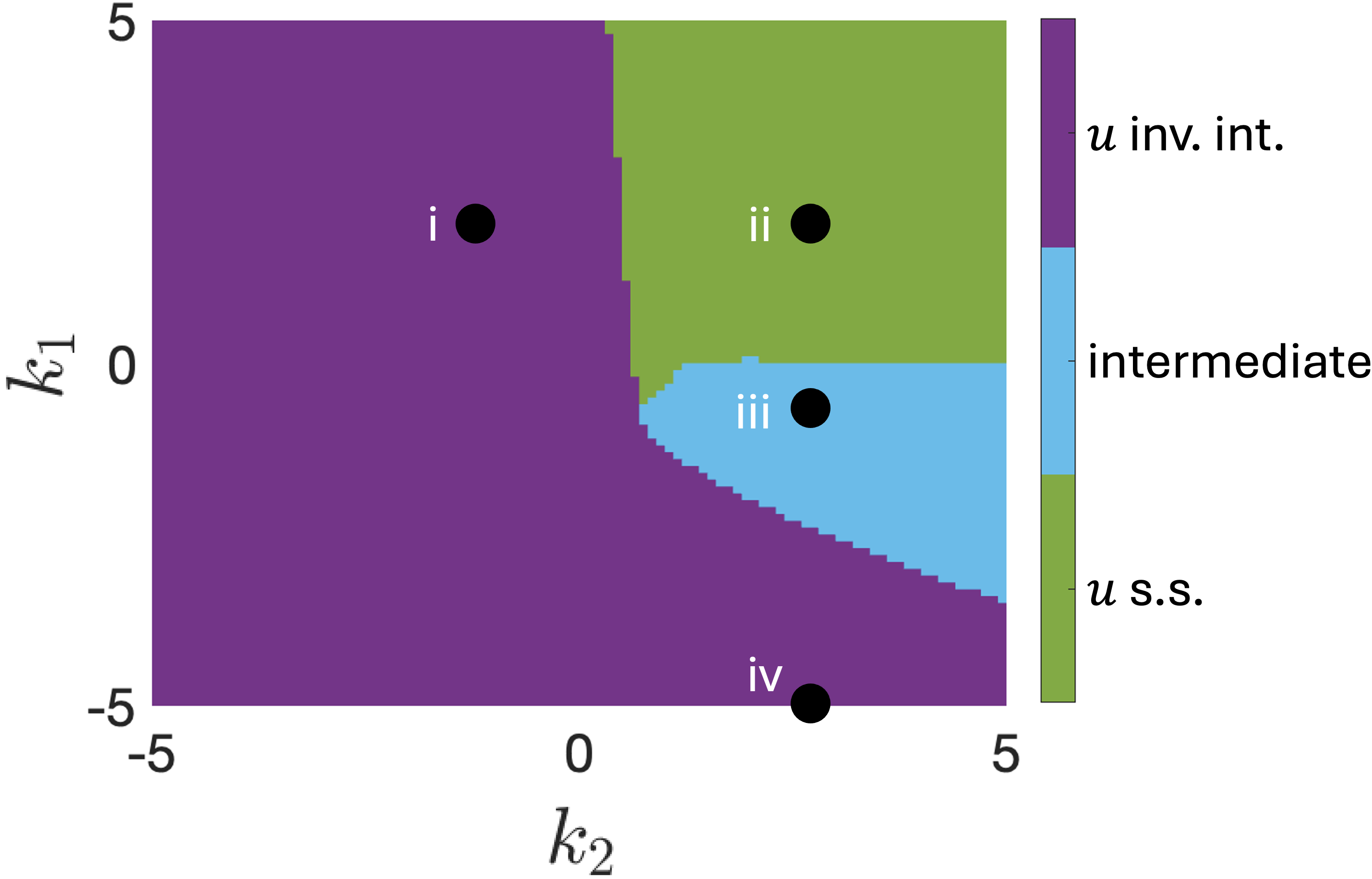}
        \caption{$k_0=-1$}
        \label{Wang_Figure63}
    \end{subfigure}
    
    \begin{subfigure}[t]{0.24\textwidth}
        \includegraphics[width=\textwidth]{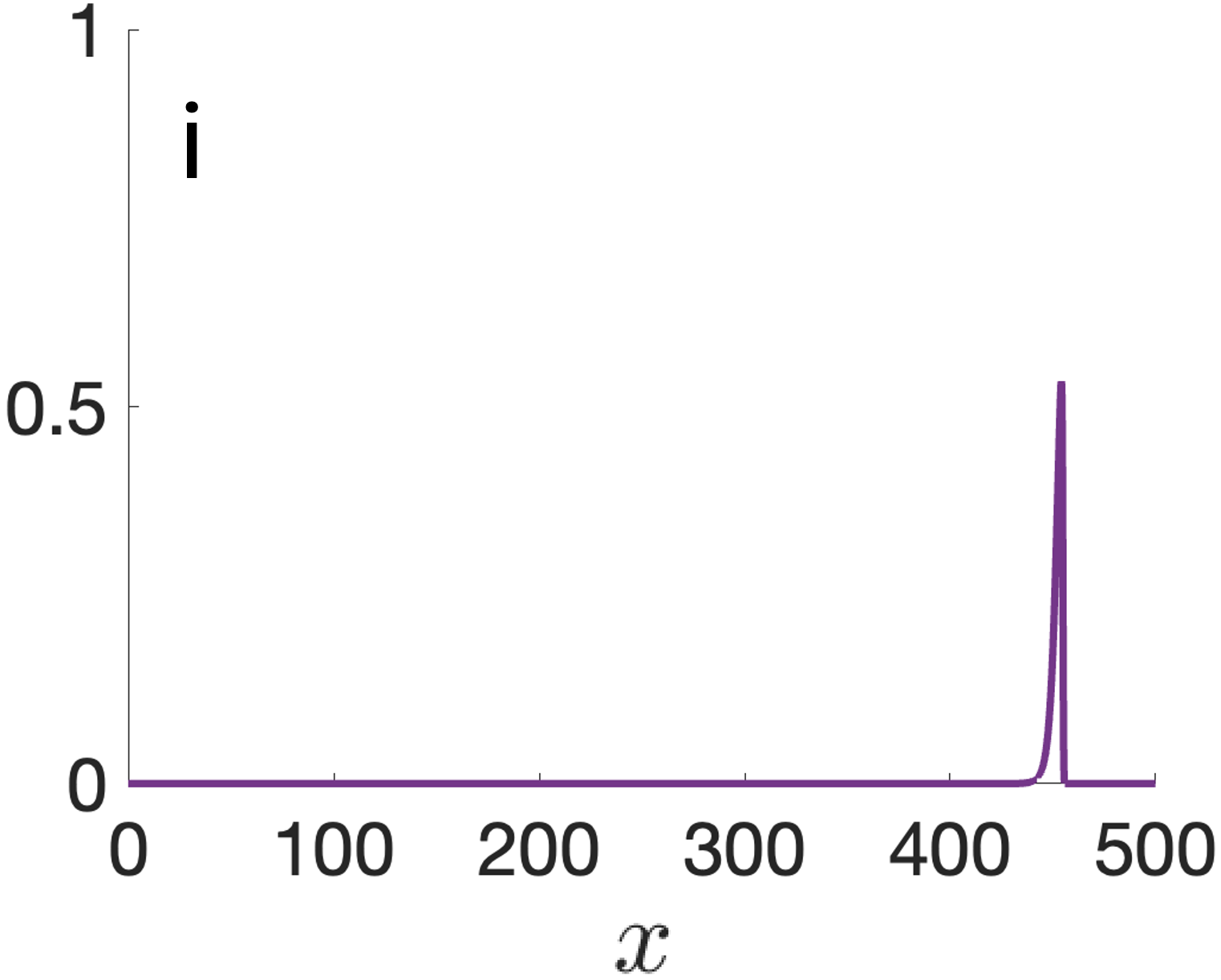}
        \caption{$k_1=2,k_2=-1$}
        \label{Wang_Figure64}
    \end{subfigure}
    \begin{subfigure}[t]{0.24\textwidth}
        \includegraphics[width=\textwidth]{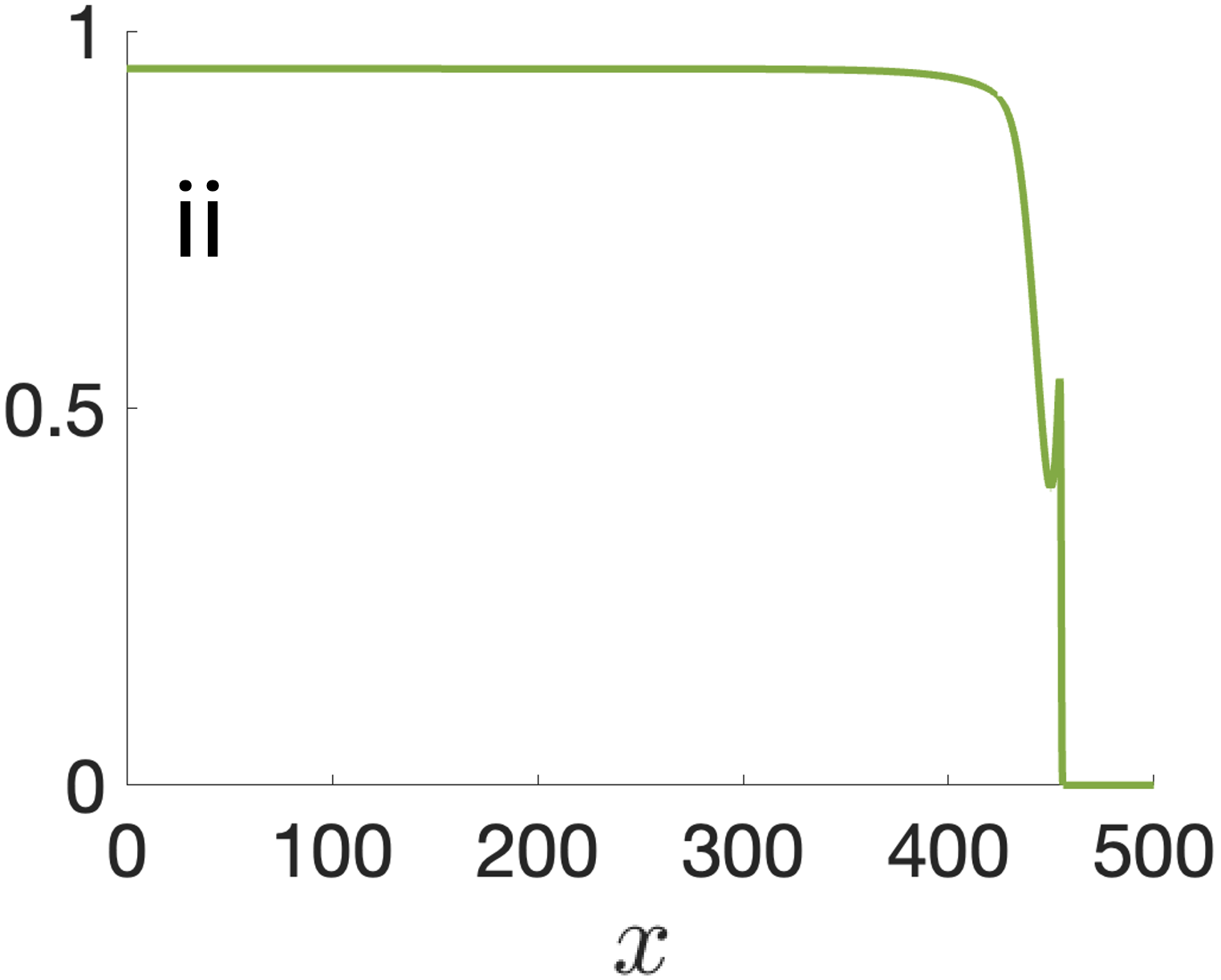}
        \caption{$k_1=2,k_2=3$}
        \label{Wang_Figure65}
    \end{subfigure}
    \begin{subfigure}[t]{0.24\textwidth}
        \includegraphics[width=\textwidth]{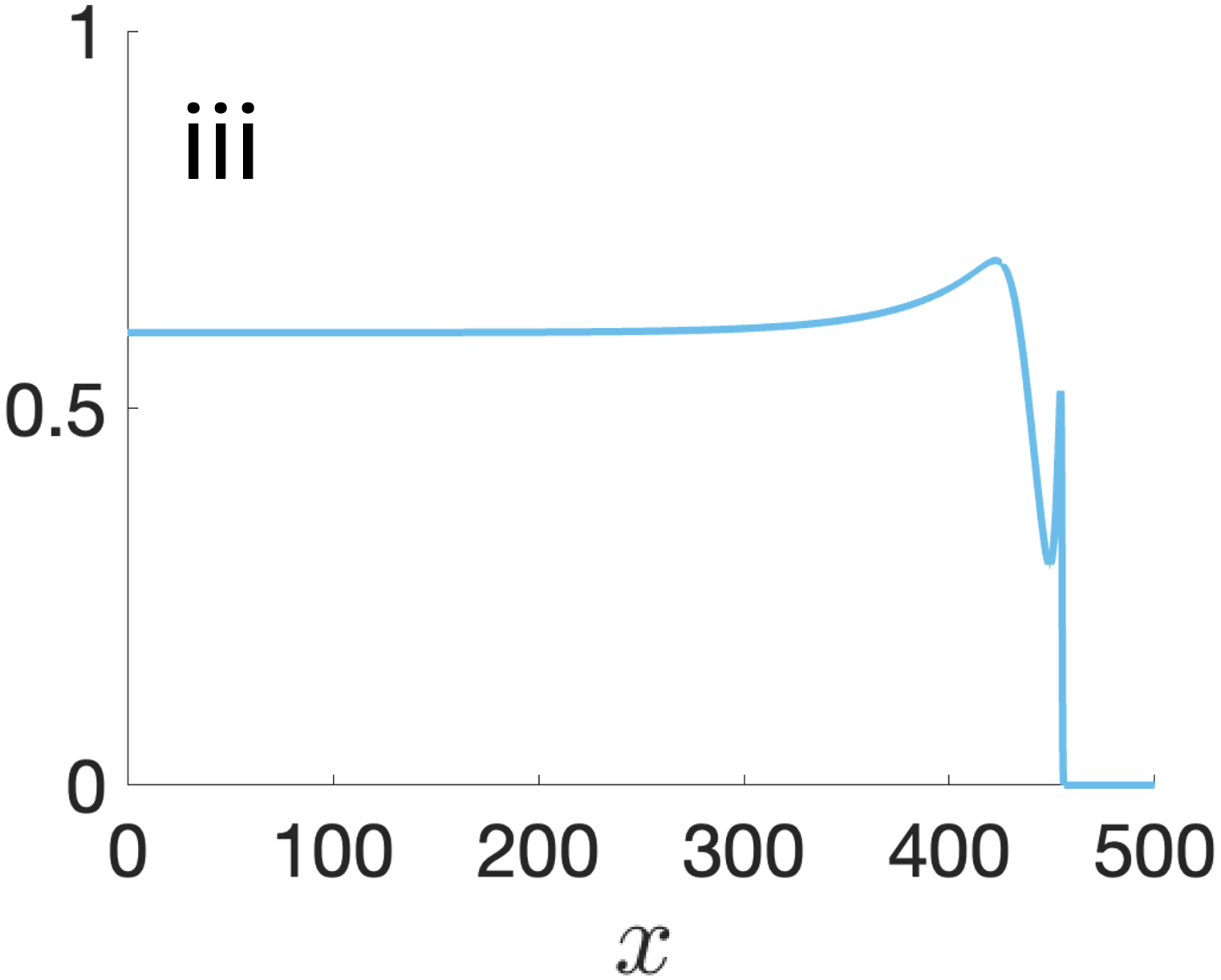}
        \caption{$k_1=-1,k_2=3$}
        \label{Wang_Figure66}
    \end{subfigure}
    \begin{subfigure}[t]{0.24\textwidth}
        \includegraphics[width=\textwidth]{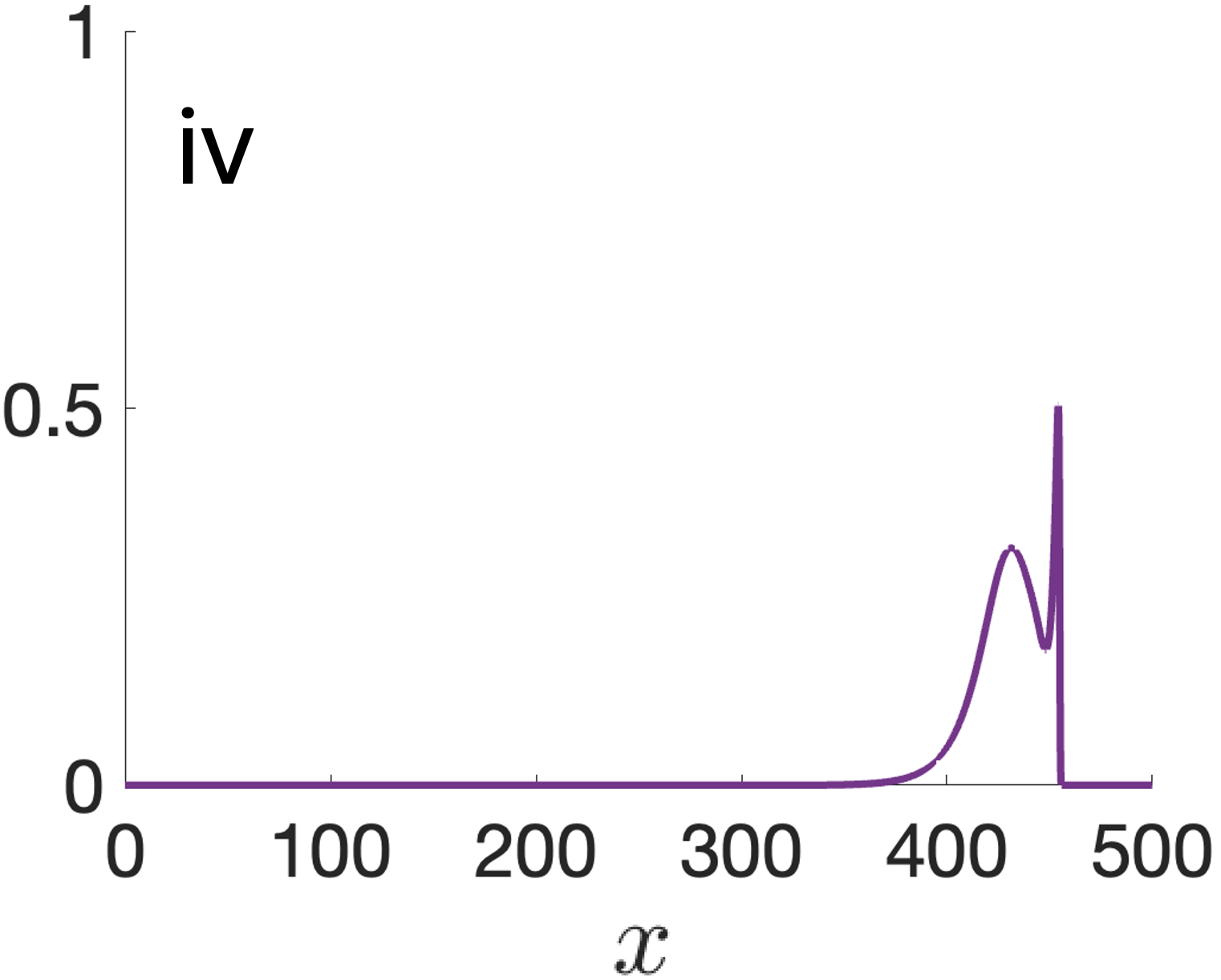}
        \caption{$k_1=-5,k_2=3$}
        \label{Wang_Figure67}
    \end{subfigure}
    \caption{(a) The locations of global maximum levels of protesting for $k_0=-1$ and $(k_1,k_2) \in [-5,5] \times [-5,5]$, while (b)-(e) are example protest activity profiles from (a) with the dots and labels indicating the $(k_1,k_2)$ coordinates (b)-(e) are from. Purple indicates regions where the global maximum occurs at the invading interface; green indicates regions where the global maximum is the steady state (occurs far behind the invading interface); and light blue indicates regions where the global maximum occurs at some intermediate location.}
    \label{Wang_Figure68}
\end{figure}

Based on the preceding discussion, it is established that all profiles in Figure \ref{Wang_Figure63} fall within the receding regime. For very large coordinates of $k_1$ and/or $k_2$, the decrease in activity immediately behind the invading interface is subtle, while Figures \ref{Wang_Figure65} and \ref{Wang_Figure66} exhibit a more visible decrease.

In Figure \ref{Wang_Figure63}, when holding $k_1>0$ fixed and transitioning from some $k_2<0$ to some $k_2>0$, there is a shift from the global maximum being situated at the invading interface (depicted in purple, e.g., Figure \ref{Wang_Figure64}) to being positioned far behind the wave (indicated in green, e.g., Figure \ref{Wang_Figure65}). Examining profiles along this trajectory from $k_2<0$ to $k_2>0$, we observe pulse-like profiles that eventually transform into non-pulse profiles, yet with $u(x_1,15) \approx u(x=0,15) \forall x_1<300$. As we progress to higher $k_2$ values, the residual level steadily increases until, at some point (dependent on the fixed $k_1>0$), the residual value surpasses the value at the invading interface. At this juncture, the system enters the green region.

Similarly, by fixing a $k_2>0$ in Figure \ref{Wang_Figure63} and descending from some $k_1>0$ to some $k_1<0$, there is a transition from the global maximum being the residual value (indicated in green, e.g., Figure \ref{Wang_Figure65}) to being positioned at some intermediary point between the invading interface and $u(x=0)$ (depicted in light blue, e.g., Figure \ref{Wang_Figure66}), and finally to the maximum being situated at the invading interface (depicted in purple, e.g., Figure \ref{Wang_Figure67}). Examining profiles along this trajectory from $k_1>0$ to $k_1 \ll 0$, we observe the residual value decreasing progressively until another inflection point emerges, and a local maximum forms at some $x \in (400,450)$. When this local maximum materializes, the system enters the light blue region, and this local maximum becomes the global maximum. With the continued reduction in the $k_1$ value, the residual value continues to decrease, gradually lowering the local maximum. Eventually, the local maximum falls below the value at the invading interface, marking the transition into the purple region.

\begin{figure} [H]
    \centering
    \begin{subfigure}[t]{0.5\textwidth}
        \includegraphics[width=\textwidth]{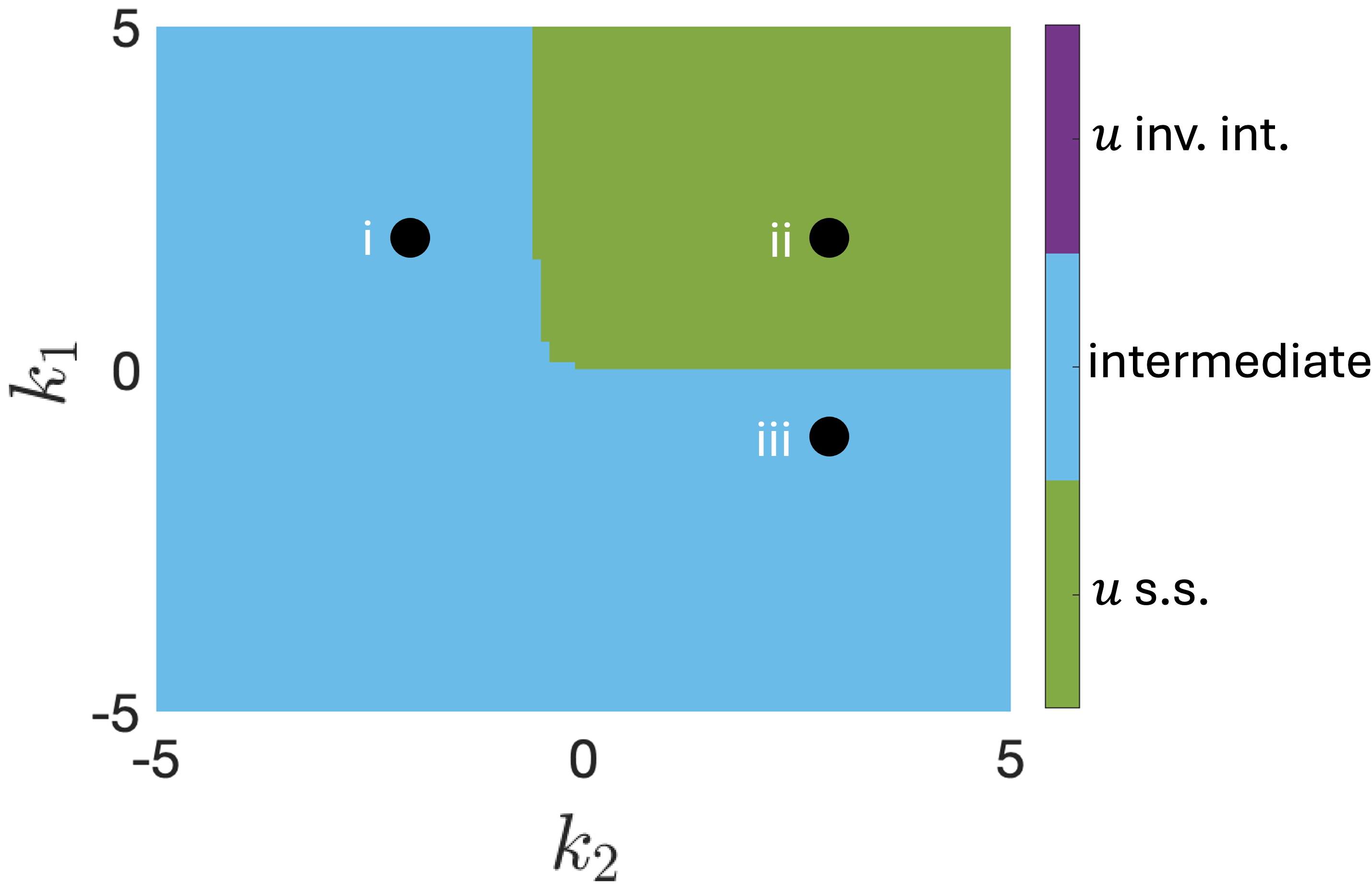}
        \caption{$k_0=1$}
        \label{Wang_Figure69}
    \end{subfigure}
    
    \begin{subfigure}[t]{0.24\textwidth}
        \includegraphics[width=\textwidth]{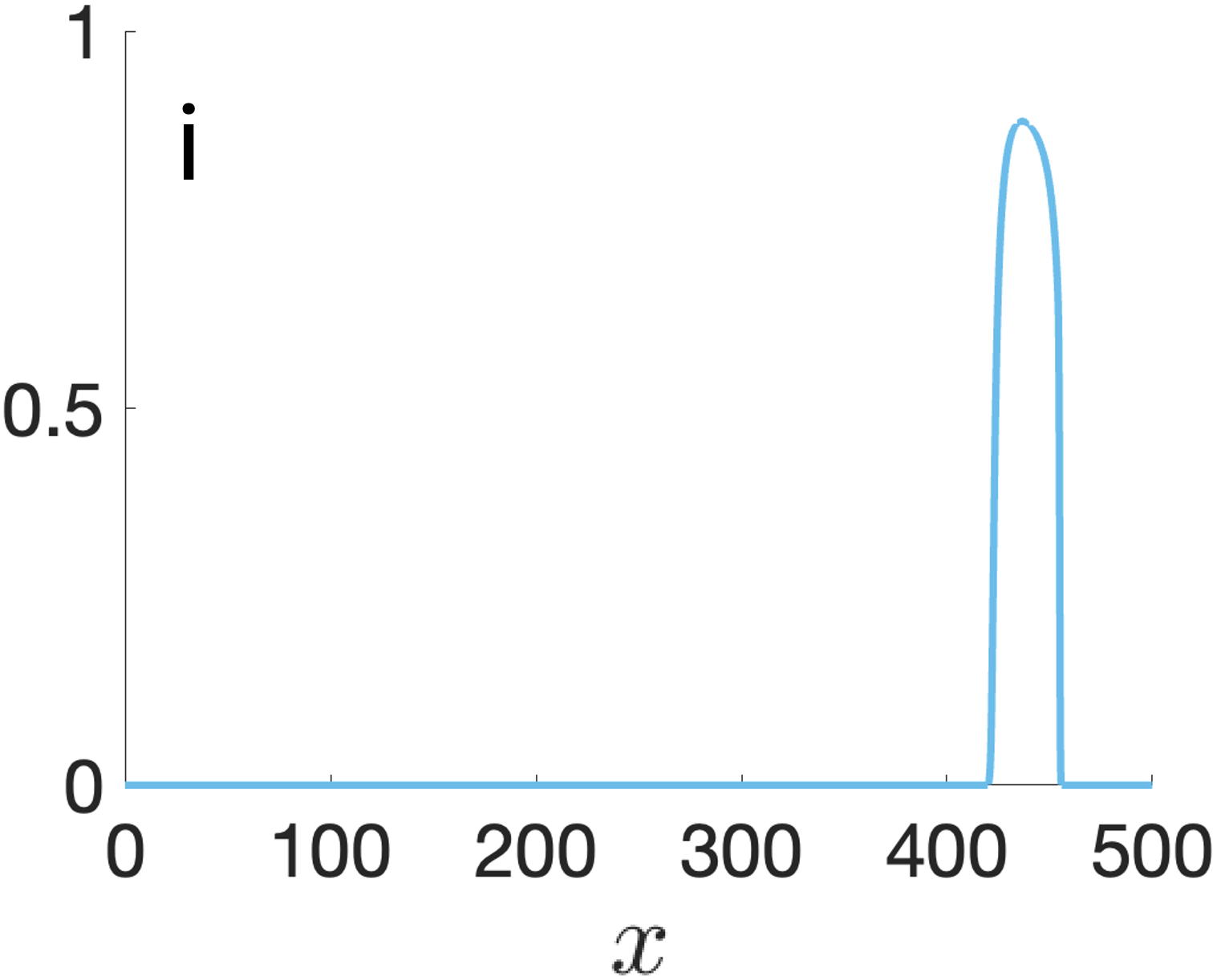}
        \caption{$k_1=2,k_2=-2$}
        \label{Wang_Figure70}
    \end{subfigure}
    \begin{subfigure}[t]{0.24\textwidth}
        \includegraphics[width=\textwidth]{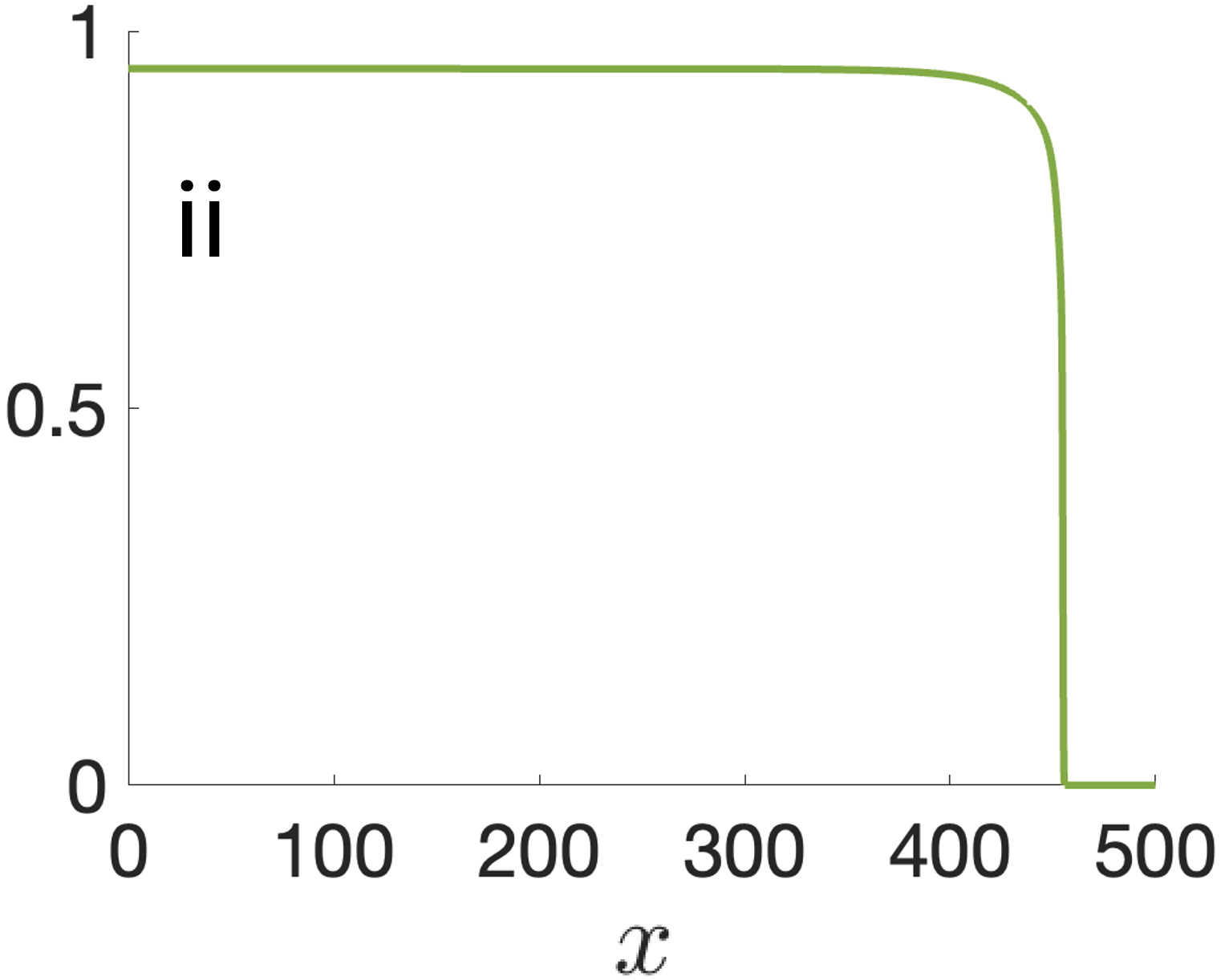}
        \caption{$k_1=2,k_2=3$}
        \label{Wang_Figure71}
    \end{subfigure}
    \begin{subfigure}[t]{0.24\textwidth}
        \includegraphics[width=\textwidth]{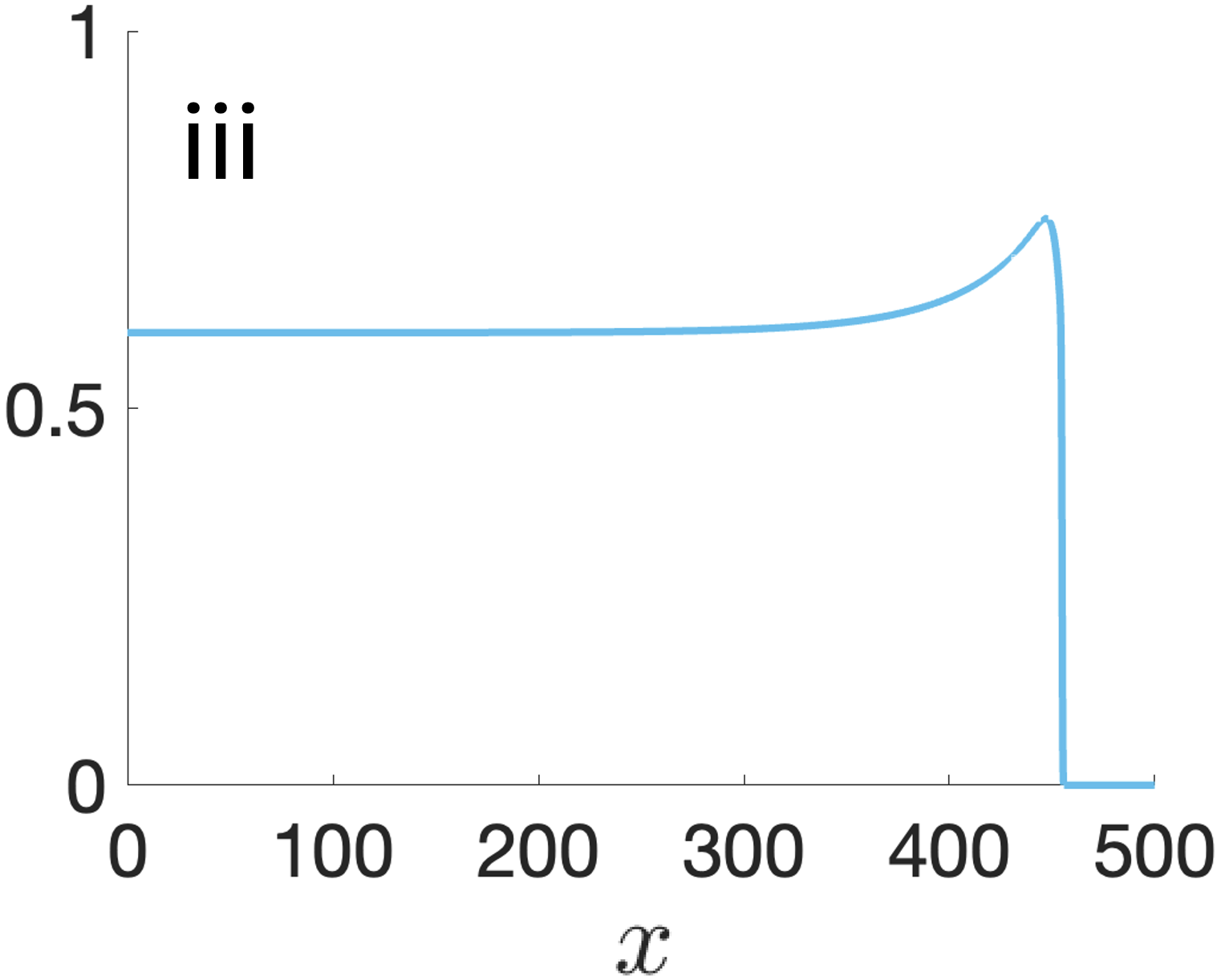}
        \caption{$k_1=-1,k_2=3$}
        \label{Wang_Figure72}
    \end{subfigure}
    \caption{(a) shows locations of global maximum levels of protesting for $k_0=-1$ and $(k_1,k_2) \in [-5,5] \times [-5,5]$, while (b)-(d) are example protest activity profiles from (a) with the dots and labels indicating the $(k_1,k_2)$ coordinates (b)-(d) are from. Purple indicates regions where the global maximum occurs at the invading interface; green indicates regions where the global maximum is the steady state (occurs far behind the invading interface); and light blue indicates regions where the global maximum occurs at some intermediate location.}
    \label{Wang_Figure73}
\end{figure}

In Figure \ref{Wang_Figure73}, for a fixed $k_1>0$ and moving from some $k_2<0$ to some $k_2>0$, we see a pulse-like profile (e.g., Figure \ref{Wang_Figure70}), which as $k_2$ increases will get wider and wider (e.g., the green curve $k_1=1$,$k_2=-1$ in Fig \ref{Wang_Figure31}) until the plateau is as wide as the grid and hence does not drop to near-zero. At this point, the maximum level is the residual level (e.g., Fig \ref{Wang_Figure71}) rather than occurring at some intermediate point. If starting at a fixed $k_2>0$ and moving from some $k_1>0$ down to some $k_1<0$, we observe a decrease in residual value, then eventually an emergence of an inflection point and formation of a local maximum at some $x \in (400,450)$, at which point we have transitioned from the green region (e.g., Figure \ref{Wang_Figure76}) to the light blue region (e.g., Fig \ref{Wang_Figure72}). As we continue moving along the fixed $k_1>0$ to lower and lower $k_2$ values, the residual value continues decreasing until it is classified as a near-pulse profile. Eventually, the pulse-like profile will become narrow enough that the algorithm for detecting the location of the global maximum will determine whether it occurs at the invading interface.

It should be noted that for simulations $k_0=1$, $k_1<-5$, and $k_2<-5$, our algorithm found global maxima occurring at the invading interface in clear contradiction to our observation that when $k_0>0$ there is an increase in the level of protesting immediately behind the invading interface. We believe that this contradiction is purely due to limitations of numerical simulations: we believe that in these parameter regimes, there is a local maximum behind the invading interface, but that these maxima fall between points on our discretized grid so will not be found in our algorithm.

\begin{figure} [H]
    \centering
    \begin{subfigure}[t]{0.5\textwidth}
        \includegraphics[width=\textwidth]{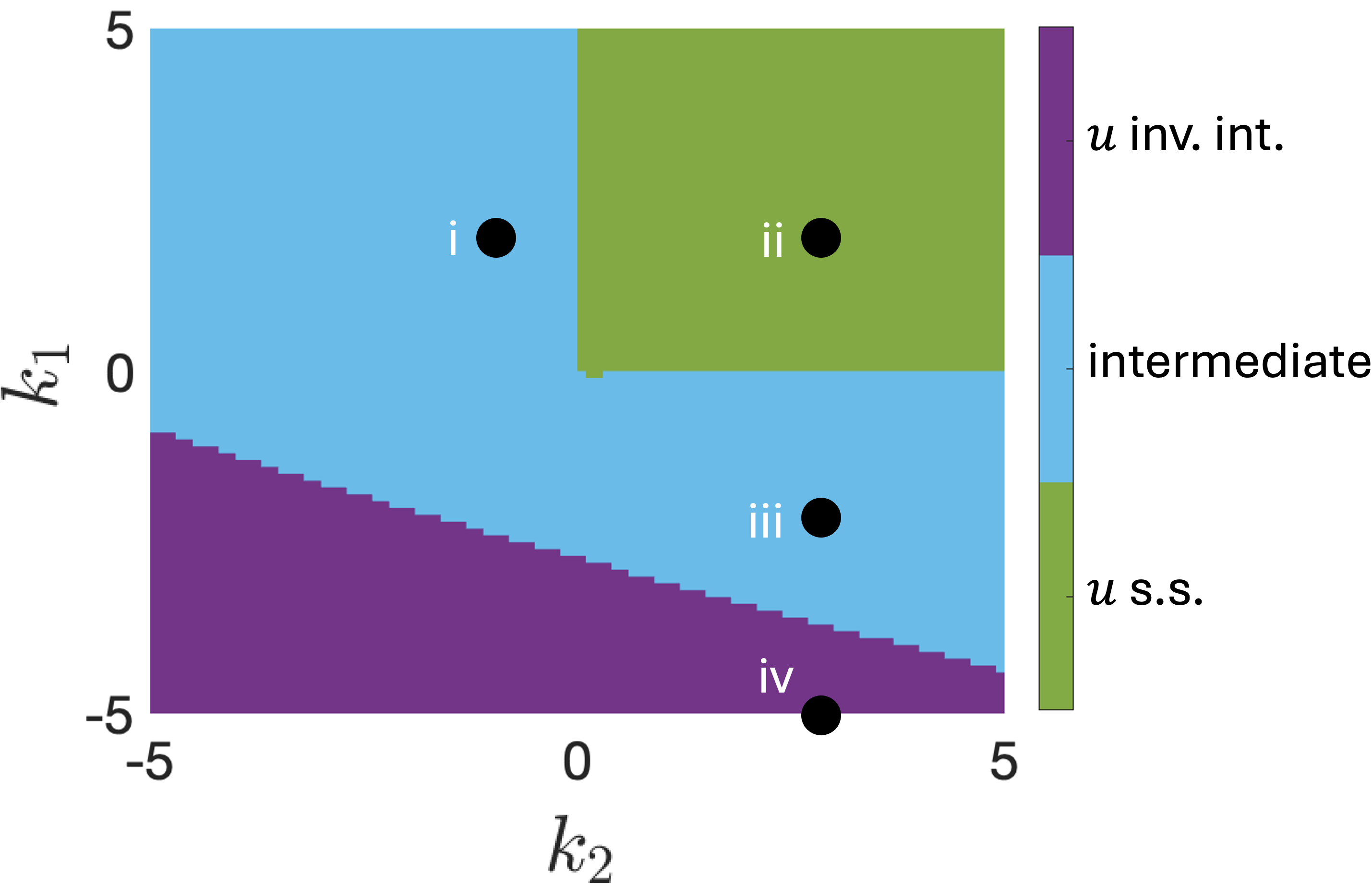}
        \caption{$k_0=0$}
        \label{Wang_Figure74}
    \end{subfigure}
    
    \begin{subfigure}[t]{0.24\textwidth}
        \includegraphics[width=\textwidth]{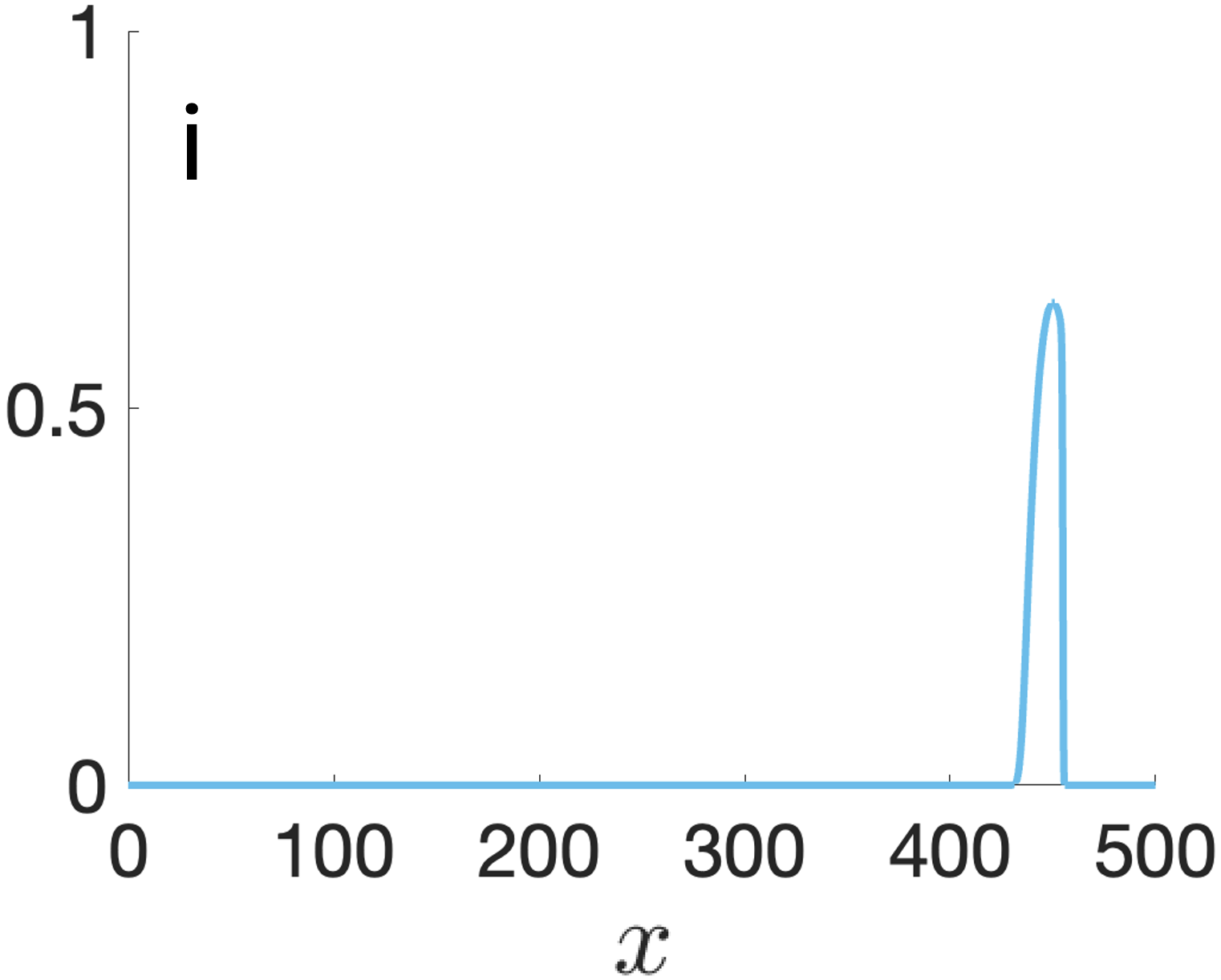}
        \caption{$k_1=2,k_2=-1$}
        \label{Wang_Figure75}
    \end{subfigure}
    \begin{subfigure}[t]{0.24\textwidth}
        \includegraphics[width=\textwidth]{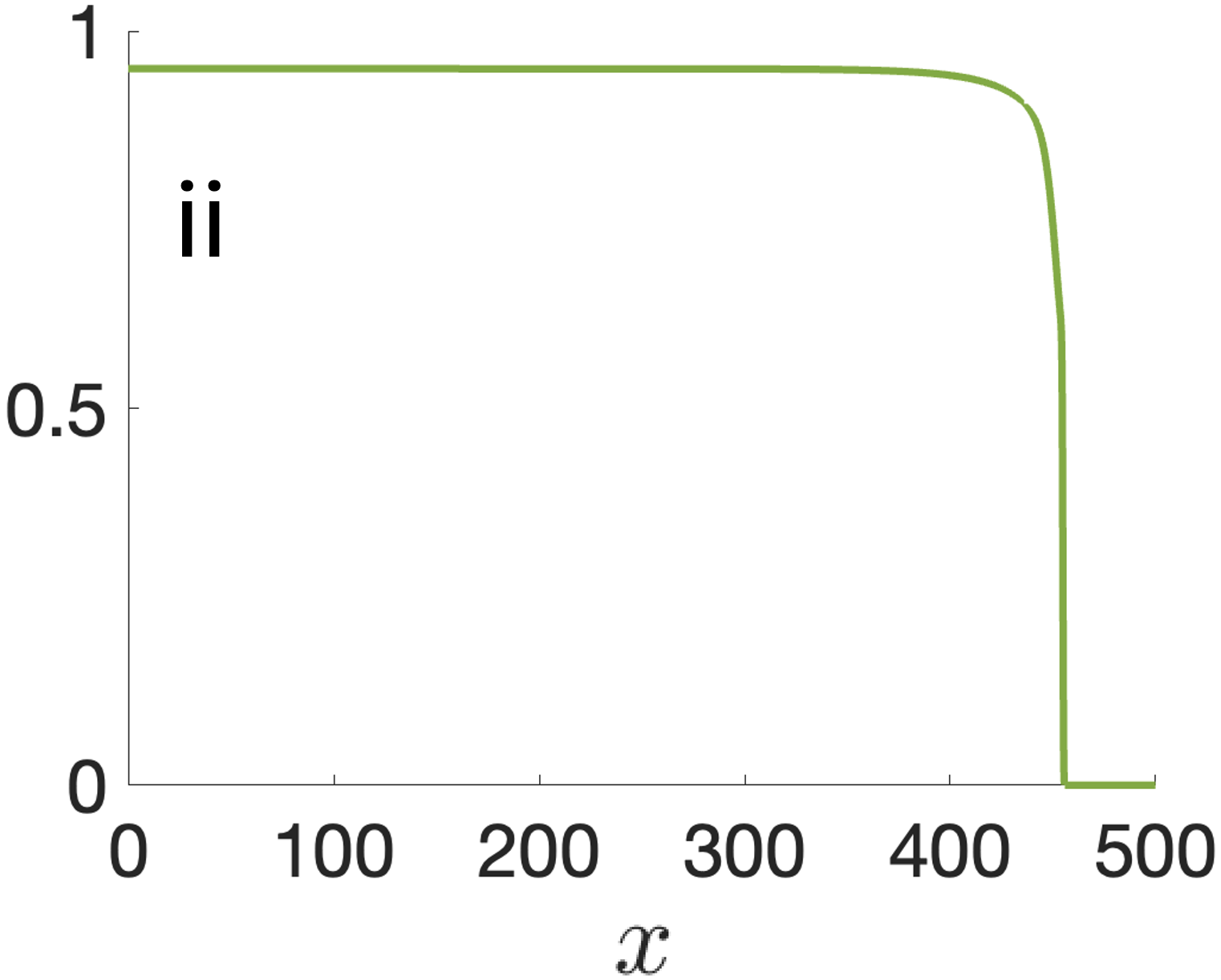}
        \caption{$k_1=2,k_2=3$}
        \label{Wang_Figure76}
    \end{subfigure}
    \begin{subfigure}[t]{0.24\textwidth}
        \includegraphics[width=\textwidth]{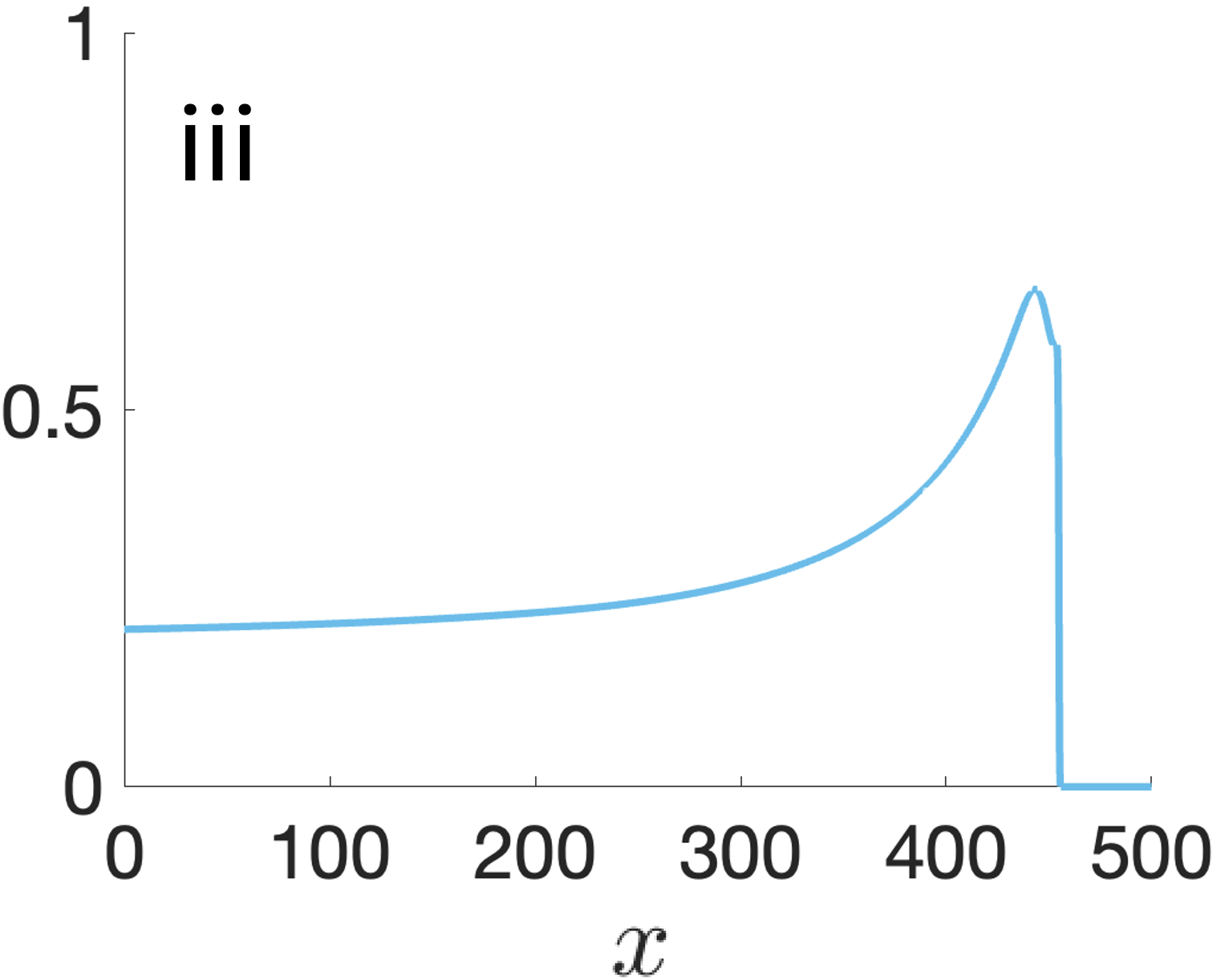}
        \caption{$k_1=-2,k_2=3$}
        \label{Wang_Figure77}
    \end{subfigure}
    \begin{subfigure}[t]{0.24\textwidth}
        \includegraphics[width=\textwidth]{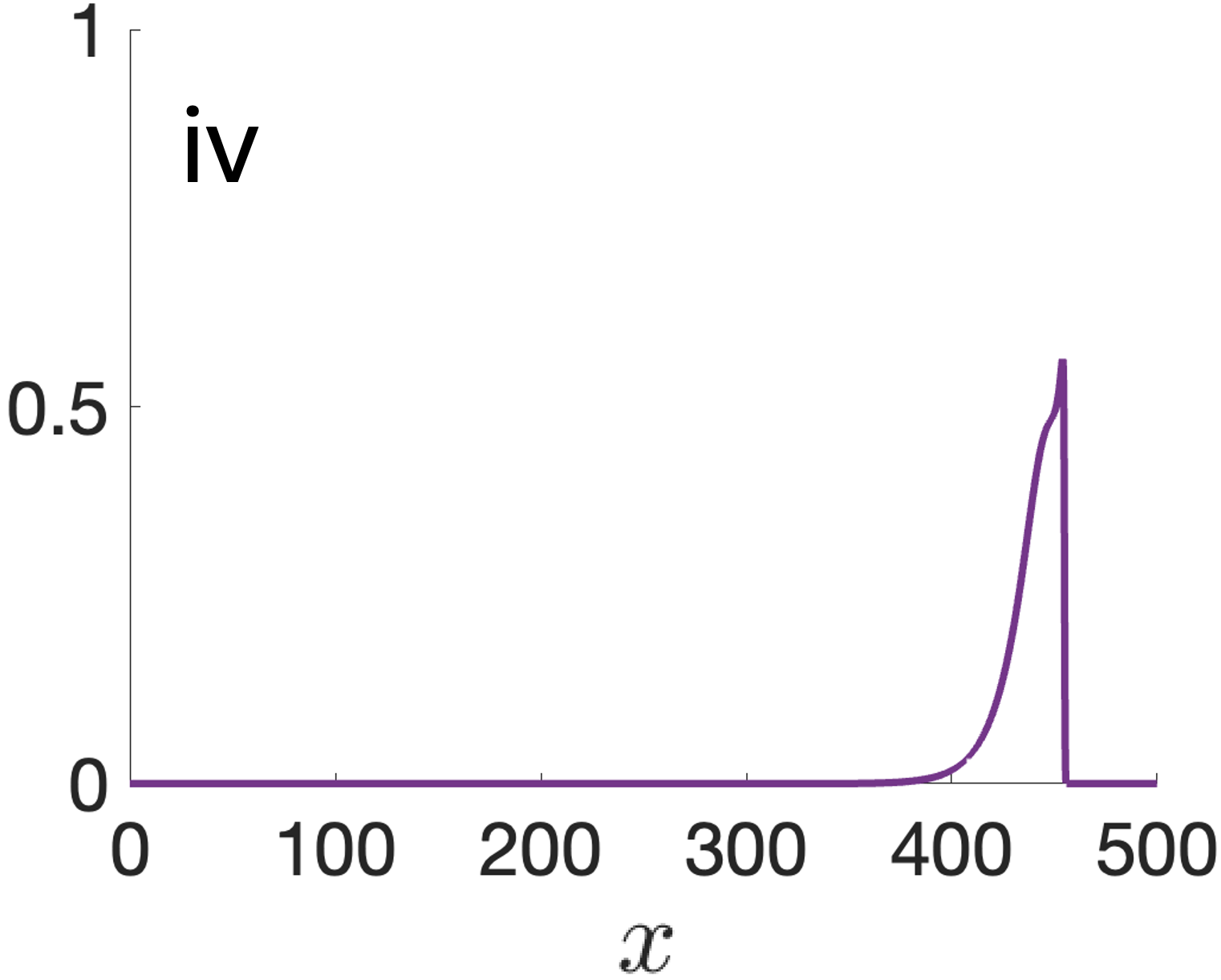}
        \caption{$k_1=-5,k_2=3$}
        \label{Wang_Figure78}
    \end{subfigure}
    \caption{(a) shows locations of global maximum levels of protesting for $k_0=0$ and $(k_1,k_2) \in [-5,5] \times [-5,5]$, while (b)-(e) are example protest activity profiles from (a) with the dots and labels indicating the $(k_1,k_2)$ coordinates (b)-(e) are from. Purple indicates regions where the global maximum occurs at the invading interface; green indicates regions where the global maximum is the steady state (occurs far behind the invading interface); and light blue indicates regions where the global maximum occurs at some intermediate location.}
    \label{Wang_Figure79}
\end{figure}

The heatmap Figure \ref{Wang_Figure74} and the profiles behind it are qualitatively similar to the heatmap Fig \ref{Wang_Figure69} and its profiles. One key difference is that pulse-like profiles for $k_1<0$ are much narrower in this $k_0=0$ case compared to the $k_0=1$ case above. Another key difference is that the observed expansion immediately behind the wavefront for $k_1<0$ and $k_2>0$ is also much narrower in the $k_0=0$ case compared to the $k_0=1$ case.

There is another obvious difference, which is the existence of a purple region. From earlier discussions, however, we know that for $k_0>0$, the protest activity is expanding immediately following the wavefront, so the global maximum cannot occur at the invading interface. When $k_0=0$, it is less clear whether there truly is a decrease immediately behind the invading interface or if there is an increase that is not observed in our numerical simulations. That is, it is unclear whether or not there should be a purple region, or if the purple region we see is entirely due to the limitations of numerical simulations. From visual inspection, Fig \ref{Wang_Figure78} appears to show a decrease in $u$ immediately behind the wavefront, but the actual global maximum value may occur between points on the discretized grid. Due to known limitations of the numerical methods, we cannot declare definitely that the purple regions have global maxima at their respective invading interfaces; rather, we can only state they are global maxima in our numerical simulations.


\section{Discussion}

We have presented a model to examine the dynamics of levels of social activity and social tension under the influence of policing. The parameters of the model were classified based on whether they led to a monotone or non-monotone system, and the stability of stationary solutions. In the monotone system, we established the existence of traveling waves, where the uniqueness of the wave speed depends on whether the source type is bistable or monostable. We have not established the existence or stability of traveling waves in the non-monotone system, a topic we aim to address in future work.

It is also intriguing and crucial to investigate solutions to system \eqref{Wang_Equation3} in non-monotone regimes where at least one of the three $k$ parameters has an inhibitory effect, as observed in real-world incident data (see \cite{Wang_Bonnasse-Gahot_2018}). Our numerical simulations confirm the existence of non-monotone wavefronts and near-pulse solutions within these regimes. Moreover, given that the classical theorems employed for asymptotic stability are inapplicable for non-monotone systems, exploring spectral stability for these regimes could be an initial step before delving into nonlinear stability. Additionally, it is natural to examine the solutions using bifurcation theory and identify and classify any potential bifurcations that may occur.

Another direction of interest is the study of system \eqref{Wang_Equation3} as a control problem and optimize the level of social activity or tension, or a combination of both. This would be a more direct approach to obtaining optimal policing strategies for relieving tension or reducing activities. Moreover, in addition to the self-reinforcement accounted for in the $u$ equation, exogenous factors can also be considered and incorporated into our model. For example, in the original model proposed in \cite{Wang_Berestycki_2015}, continuing external events that happen at particular times and locations have been included in the source term for social tension.

A limitation of our study is that we cannot provide a comprehensive analytic derivation to classify the stability of the nontrivial stationary solutions due to the implicit expression for the nontrivial steady states. Also, our model has not been fitted to any real-world dataset. The application of our system to datasets containing measurements of the variables $u, v, P$ is an intriguing possible course for future investigation.

From a numerical standpoint, we extensively investigate various parameter regimes and extract metrics with practical applications in mind. We envision that this research can contribute to categorizing different protesting activities as more data becomes available.\\

One crucial observation is that the effectiveness of a police management strategy can vary depending on the circumstances. For instance, employing an escalated force approach might initially reduce social tension, followed by an increase if $k_0<0$. Conversely, when $k_0>0$, tension and activity typically rise continuously. Regardless of the situation, the escalated force approach tends to amplify residual social tension, even if activity remains unchanged. However, there are instances where police presence is indispensable. For instance, in cases where $k_0>0$ and unrest escalates in the absence of police, adopting a negotiated management model can ameliorate protest outcomes. The condition $k_0>0$ arises when there are counterprotesters or agitators within a protest, possibly leading to violence regardless of police intervention. In such scenarios, we observe decreased social tension and activity, sometimes accompanied by traveling pulses.

The analysis suggests that the negotiated management model is generally preferable due to its ability to reduce social tension. However, it's important to note that the costs associated with action haven't been considered. Accounting for these costs might warrant reducing action in certain situations, even if tension increases. However, relying on an escalated management model seldom yields positive outcomes, particularly since the system's state is often unknown. Developing methods to measure social tension and understand the demographics of potential participants can assist in selecting an appropriate strategy.
\\

{\bf Acknowledgement:}  Rodriguez partially funded by NSF-DMS-2042413 and AFOSR MURI FA9550-22-1-0380.  Brantingham, Wang, Wessler were funded by AFOSR MURI FA9550-22-1-0380.

\section{Appendix A. Asymptotic decay rates for $k_0 \ge 0$}
\label{sec:appendix-asymptotic-rates}
Assume that the system has traveling wave solutions $\hat{\bm{u}}(z) = (\hat{u}(z),\hat{v}(z),\hat{P}(z))$, where we change the coordinate system to $z = x-ct$ and $c$ is the wave speed. Then, system ~\eqref{Wang_Equation3} becomes
\begin{align*}
    \begin{cases}
        -cu_z = u_{zz} + f(u,v,P), \\
        -cv_z = v_{zz} + g(u,v,P), \\
        -cP_z = s(u,v,P),
    \end{cases}
\end{align*}
which can be written as a five-dimensional, first-order system
\begin{align*}
    \begin{cases}
        u' = w, \\
        v' = y, \\
        P' = -\frac{1}{c} s(u,v,P), \\
        w' = -cw - f(u,v,P), \\
        y' = -cy - g(u,v,P).
    \end{cases}
\end{align*}
The linearization of this system near the fixed points is
\begin{align}
\label{Wang_Equation16}
U' =
\begin{bmatrix}
    u \\ v \\ P \\ w \\ y
\end{bmatrix}'
=
\begin{bmatrix}
    0 & 0 & 0 & 1 & 0 \\
    0 & 0 & 0 & 0 & 1 \\
    -\frac{1}{c}s_u(u,v,P) & 0 & -\frac{1}{c}s_P(u,v,P) & 0 & 0 \\
    -f_u(u,v,P) & -f_v(u,v,P) & -f_P(u,v,P) & -c & 0 \\
    -g_u(u,v,P) & -g_v(u,v,P) & -g_P(u,v,P) & 0 & -c
\end{bmatrix}
\begin{bmatrix}
    u \\ v \\ P \\ w \\ y
\end{bmatrix}
=
AU.
\end{align}
Recall from the previous sections, that the system has the trivial steady states $(0, (1+P)^{-k_2}, P, 0, 0)$ and, in certain parameter regimes, also the non-trivial steady states $(u^*, v^*, 1, 0, 0).$
The linearization matrix $A$ at the trivial steady states is
\begin{align*}
    A \rvert_{(0, (1+P)^{-k_2}, P, 0, 0)} = 
    \begin{bmatrix}
    0 & 0 & 0 & 1 & 0 \\
    0 & 0 & 0 & 0 & 1 \\
    -\frac{1}{c}s_u(\hat{\bm{u}}_+) & 0 & 0 & 0 & 0 \\
    -f_u(\hat{\bm{u}}_+) & 0 & 0 & -c & 0 \\
    -g_u(\hat{\bm{u}}_+) & -g_v(\hat{\bm{u}}_+) & -g_P(\hat{\bm{u}}_+) & 0 & -c
\end{bmatrix}
\end{align*}
and has eigenvalues
\begin{align*}
\begin{cases}
    \lambda_1 = 0, \\
    \lambda_{2,3} = \frac{-c \pm \sqrt{c^2 - 4f_u(\hat{\bm{u}}_+)}}{2}, \\
    \lambda_{4,5} = \frac{-c \pm \sqrt{c^2 - 4g_v(\hat{\bm{u}}_+)}}{2},
\end{cases}
\end{align*}
where the plus sign is for $\lambda_{2,4}$, and minus sign is for $\lambda_{3,5}$. This convention holds for the entire paper.

The matrix at the non-trivial steady states is
\begin{align*}
    A \rvert_{(u^*, v^*, 1, 0, 0)} = 
    \begin{bmatrix}
   0 & 0 & 0 & 1 & 0 \\
    0 & 0 & 0 & 0 & 1 \\
    0 & 0 & -\frac{1}{c}s_P(\hat{\bm{u}}_-) & 0 & 0 \\
    -f_u(\hat{\bm{u}}_-) & -f_v(\hat{\bm{u}}_-) & -f_P(\hat{\bm{u}}_-) & -c & 0 \\
    -g_u(\hat{\bm{u}}_-) & -g_v(\hat{\bm{u}}_-) & -g_P(\hat{\bm{u}}_-) & 0 & -c
\end{bmatrix}
\end{align*}
with eigenvalues
\begin{align*}
\begin{cases}
    \lambda_1 &= -\frac{1}{c}s_P(\bm{\hat{\bm{u}}_-}), \\
    \lambda_{2,3} &= \frac{-c \pm \sqrt{c^2 + 2\mu_2}}{2}, \\
    \lambda_{4,5} &= \frac{-c \pm \sqrt{c^2 + 2\mu_1}}{2},
\end{cases}
\end{align*}
where
\begin{align*}
    \mu_{1,2} = -(f_u(\hat{\bm{u}}_-) + g_v(\hat{\bm{u}}_-)) \pm \sqrt{(f_u(\hat{\bm{u}}_-) + g_v(\hat{\bm{u}}_-))^2 + 4f_v(\hat{\bm{u}}_-) g_u(\hat{\bm{u}}_-) - 4f_u(\hat{\bm{u}}_-) g_v(\hat{\bm{u}}_-)}.
\end{align*}

\subsection{As $z \to +\infty$}
As $z \to +\infty$, $\hat{\bm{u}} \to \hat{\bm{u}}_+ = (0,(1+P)^{-k_2},P)$. To avoid having complex eigenvalues, we restrict that $c^2 - 4f_u(\hat{\bm{u}}_+) \ge 0$ and $c^2 - 4g_v(\hat{\bm{u}}_+) \ge 0$. The second inequality holds automatically since $g_v(\hat{\bm{u}}_+) < 0$ always. The first inequality leads to the condition
\begin{align*}
    c \ge 2\sqrt{l},
\end{align*}
where we define $l:=\text{max}(f_u(\hat{\bm{u}}_+),0)$.
To find the principal eigenvalue, which is the negative eigenvalue with the smallest magnitude, we discuss by cases depending on the value of $f_u(\hat{\bm{u}}_+)$.

\textbf{Case 1} If $f_u(\hat{\bm{u}}_+ < g_v(\hat{\bm{u}}_+))$, then $\lambda_3 < \lambda_5 < 0 < \lambda_4 < \lambda_2$, and $\lambda_5$ is the principal eigenvalue. The corresponding eigenvector is
\begin{align*}
    \bm{y}_5 = 
    \begin{bmatrix}
        0 \\
        - \cfrac{\lambda_5 + c}{g_v(\hat{\bm{u}}_+)} \\
        0 \\
        0 \\
        1
    \end{bmatrix},
\end{align*}
and the linear approximation about the steady state $\hat{\bm{u}}_+$ is
\begin{align*}
    \begin{bmatrix}
        u(z) \\
        v(z) - (1+P)^{-k_2} \\
        P(z) - P \\
        w(z) \\
        y(z)
    \end{bmatrix}
    \approx A_1\bm{y}_5 e^{\lambda_5 z},
\end{align*}
as $z \to +\infty$, for some constant $A_1$. Since the second entry in $\bm{y}_5$ is negative, we know that $A_1 < 0$ so that the wave is decaying.

\textbf{Case 2} If $f_u(\hat{\bm{u}}_+) = g_v(\hat{\bm{u}}_+)$, then $\lambda_3 = \lambda_5 < 0 < \lambda_2 = \lambda_4$, and $\lambda_3 = \lambda_5$ is the principal eigenvalue. The linear approximation about $\hat{\bm{u}}_+$ is
\begin{align*}
    \begin{bmatrix}
        u(z) \\
        v(z) - (1+P)^{-k_2} \\
        P(z) - P \\
        w(z) \\
        y(z)
    \end{bmatrix}
    \approx A_2 z \bm{y}_5 e^{\lambda_5 z} + O(e^{\lambda_5 z}),
\end{align*}
as $z \to +\infty$, for some constant $A_2$. Again, $A_2 < 0$ since we need the wave to be decaying.

\textbf{Case 3} If $g_v(\hat{\bm{u}}_+) < f_u(\hat{\bm{u}}_+) < 0$, then $\lambda_5 < \lambda_3 < 0 < \lambda_2 < \lambda_4$, and $\lambda_3$ is the principal eigenvalue. The corresponding eigenvector is
\begin{align*}
    \bm{y}_3 = 
    \left.\begin{bmatrix}
        \cfrac{g_v - f_u}{-\lambda_3 g_u + \frac{1}{c} g_P s_u} \\
        - \cfrac{\lambda_3 + c}{f_u} \\
        \cfrac{s_u(g_v-f_u)}{\lambda_3 c(\lambda_3 g_u - \frac{1}{c}g_P s_u)} \\
        \cfrac{\lambda_3(g_v-f_u)}{-\lambda_3 g_u + \frac{1}{c} g_P s_u} \\
        1
    \end{bmatrix}\right\rvert_{\hat{\bm{u}}_+},
\end{align*}
and the linear approximation about $\hat{\bm{u}}_+$ is
\begin{align*}
    \begin{bmatrix}
        u(z) \\
        v(z) - (1+P)^{-k_2} \\
        P(z) - P \\
        w(z) \\
        y(z)
    \end{bmatrix}
    \approx A_3\bm{y}_3 e^{\lambda_3 z},
\end{align*}
as $z \to +\infty$, for some constant $A_3$. One can check that the first, second, and third entries in $\bm{y}_3$ are negative, and the fourth and fifth entries are positive. This implies that $A_3 < 0$ to guarantee the decay of the wave.

\textbf{Case 4} If $f_u(\hat{\bm{u}}_+) = 0$, then $\lambda_5 < \lambda_3 < 0 = \lambda_2 < \lambda_4$. The linear approximation about $\hat{\bm{u}}_+$ is
\begin{align*}
    \begin{bmatrix}
        u(z) \\
        v(z) - (1+P)^{-k_2} \\
        P(z) - P \\
        w(z) \\
        y(z)
    \end{bmatrix}
    \approx A_4 \bm{y}_2 + O(e^{\lambda_3 z}),
\end{align*}
as $z \to +\infty$, for some constant $A_4$. Therefore, we cannot talk about asymptotic decay rates in this case.

\textbf{Case 5} If $f_u(\hat{\bm{u}}_+) > 0$, then there are two subcases:

\textbf{Case 5.1} If $c > 2\sqrt{f_u(\hat{\bm{u}}_+)}$, then $\lambda_5 < \lambda_3 < \lambda_2 < 0 < \lambda_4$, and $\lambda_2$ is the principal eigenvalue. The corresponding eigenvector is
\begin{align*}
    \bm{y}_2 = 
    \left.\begin{bmatrix}
        \cfrac{g_v - f_u}{-\lambda_2 g_u + \frac{1}{c} g_P s_u} \\
        - \cfrac{\lambda_2 + c}{f_u} \\
        \cfrac{s_u(g_v-f_u)}{\lambda_2 c (\lambda_2 g_u - \frac{1}{c}g_P s_u)} \\
        \cfrac{\lambda_2(g_v-f_u)}{-\lambda_2 g_u + \frac{1}{c} g_P s_u} \\
        1
    \end{bmatrix}\right\rvert_{\hat{\bm{u}}_+},
\end{align*}
and the linear approximation about $\hat{\bm{u}}_+$ is
\begin{align*}
    \begin{bmatrix}
        u(z) \\
        v(z) - (1+P)^{-k_2} \\
        P(z) - P \\
        w(z) \\
        y(z)
    \end{bmatrix}
    \approx A_5\bm{y}_2 e^{\lambda_2 z},
\end{align*}
as $z \to +\infty$, for some constant $A_5$. Similarly to the previous cases, the first three entries in $\bm{y}_2$ are negative and the last two are positive, implying that $A_5 < 0$.

\textbf{Case 5.2} If $c = 2\sqrt{f_u(\hat{\bm{u}}_+)}$, then $\lambda_5 < \lambda_3 = \lambda_2 < 0 < \lambda_4$, and $\lambda_2 = \lambda_3$ is the principal eigenvalue. The linear approximation about $\hat{\bm{u}}_+$ is
\begin{align*}
    \begin{bmatrix}
        u(z) \\
        v(z) - (1+P)^{-k_2} \\
        P(z) - P \\
        w(z) \\
        y(z)
    \end{bmatrix}
    \approx A_6 z \bm{y}_2 e^{\lambda_2 z} + O(e^{\lambda_2 z}),
\end{align*}
as $z \to +\infty$, for some negative constant $A_6$.

\subsection{As $z \to -\infty$}
As $z \to -\infty$, $\hat{\bm{u}} \to \hat{\bm{u}}_- = (u*,v*,1,0,0)$. The principal eigenvalue is the positive eigenvalue with the smallest magnitude. Note that $\lambda_3$ and $\lambda_5$ are always negative, and that $\lambda_2$ and $\lambda_4$ are nonnegative if and only if $\mu_2$ and $\mu_1$ are nonnegative, respectively. Therefore, we want to discuss by cases depending on the signs of $\mu_1,\mu_2$. We find numerically that $\mu_1,\mu_2$ are always real for $k_0 \ge 0$, and are not always real for $k_0 < 0$. As for their signs, recall from section \ref{sec:Fixed point analysis on the spatially homogeneous system} that $f_u + g_v < 0$, implying that $\mu_1 > 0$ always holds, and that $\mu_2 > 0$ if and only if $f_v(\hat{\bm{u}}_-)g_u(\hat{\bm{u}}_-) - f_u(\hat{\bm{u}}_-)g_v(\hat{\bm{u}}_-) < 0$. This inequality condition can be numerically shown true for $k_0 \ge 0$ and as a result, $\mu_2 > 0$ always holds.

Since now we know that $\lambda_{1,2,4}$ are always positive and that $\lambda_2 \le \lambda_4$, let us discuss by a few cases depending on what the principle eigenvalue is as $z \to -\infty$. 

\textbf{Case 1} If $\lambda_1 < \lambda_2 \le \lambda_4$, then $\lambda_1$ is the principle eigenvalue. The corresponding eigenvector is
\begin{align*}
    \bm{l}_1 = 
    \left.\begin{bmatrix}
        \cfrac{c^3(f_v g_P + f_P(s_P-g_v)) - c f_P s_P^2}{\eta s_P} \\
        \cfrac{1}{\lambda_1} \\
        \cfrac{c^4(-f_v g_u + (f_u - s_P)(g_v - s_P)) + c^2 (f_u + g_v - 2s_P)s_P^2 + s_P^4}{c \eta s_P} \\
        \cfrac{- c^2(f_v g_P + f_P(s_P - g_v)) + f_P s_P^2}{\eta} \\
        1
    \end{bmatrix}\right\rvert_{\hat{\bm{u}}_-},
\end{align*}
where $\eta = g_P s_P^2 - c^2 (-f_u g_P + f_Pg_u + g_Ps_P)$.
Therefore, the linear approximation about $\hat{\bm{u}}_-$ is
\begin{align*}
    \begin{bmatrix}
        u^* - u(z) \\
        v^* - v(z) \\
        1 - P(z) \\
        -w(z) \\
        -y(z)
    \end{bmatrix}
    \approx B_1 \bm{l}_1 e^{\lambda_1 z},
\end{align*}
as $z \to -\infty$, for some positive constant $B_1$.

\textbf{Case 2} If $\lambda_1 = \lambda_2 < \lambda_4$, then the linear approximation about $\hat{\bm{u}}_-$ is
\begin{align*}
    \begin{bmatrix}
        u^* - u(z) \\
        v^* - v(z) \\
        1 - P(z) \\
        -w(z) \\
        -y(z)
    \end{bmatrix}
    \approx B_2 z \bm{l}_1 e^{\lambda_1 z} + O(e^{\lambda_1 z}),
\end{align*}
as $z \to -\infty$, for some negative constant $B_2$.

\textbf{Case 3} If $\lambda_1 = \lambda_2 = \lambda_4$, then we have three repeated eigenvectors, and the linear approximation about $\hat{\bm{u}}_-$ is
\begin{align*}
    \begin{bmatrix}
        u^* - u(z) \\
        v^* - v(z) \\
        1 - P(z) \\
        -w(z) \\
        -y(z)
    \end{bmatrix}
    \approx B_3 z^2 \bm{l}_1 e^{\lambda_1 z} + O(ze^{\lambda_1 z}),
\end{align*}
as $z \to -\infty$, for some positive constant $B_3$.

\textbf{Case 4} If $\lambda_2 = \lambda_4 < \lambda_1$, then the corresponding eigenvector is
\begin{align*}
    \bm{l}_2 = 
    \left.\begin{bmatrix}
        \cfrac{\left (c + \sqrt{c^2 + 2\mu_2} \right ) \left (\mu_1 + 2f_u \right )}{2g_u \mu_2} \\
        -\cfrac{2}{c - \sqrt{c^2 + 2\mu_2}} \\
        0 \\
        \cfrac{\mu_1 + 2f_u}{2g_u} \\
        1
    \end{bmatrix}\right\rvert_{\hat{\bm{u}}_-},
\end{align*}
and the linear approximation about $\hat{\bm{u}}_-$ is
\begin{align*}
    \begin{bmatrix}
        u^* - u(z) \\
        v^* - v(z) \\
        1 - P(z) \\
        -w(z) \\
        -y(z)
    \end{bmatrix}
    \approx B_4 z \bm{l}_2 e^{\lambda_2 z} + O(e^{\lambda_2 z}),
\end{align*}
as $z \to -\infty$, for some negative constant $B_4$.

\textbf{Case 5} If $\lambda_2 < \min (\lambda_1, \lambda_4)$, then $\lambda_2$ is the principle eigenvalue. In this case, we have
\begin{align*}
    \begin{bmatrix}
        u^* - u(z) \\
        v^* - v(z) \\
        1 - P(z) \\
        -w(z) \\
        -y(z)
    \end{bmatrix}
    \approx B_5 \bm{l}_2 e^{\lambda_2 z},
\end{align*}
as $z \to -\infty$, for some positive constant $B_5$.

\section{Appendix B. Values used in numerical simulations}
\label{app:tables}

\begin{table}[H]
	\begin{center}
		\begin{tabular}{ |c|c|c| } 
 			\hline
 			Term & Value (short runs, long runs) & Description \\ 
 			\hline\hline
 			$tEnd$ & 20,100 & $t$ at end of simulation \\
 			$dt$ & 0.0001 & Temporal step size \\
 			$xMin$ & 0 & $x$ value at left of grid \\
 			$xMax$ & 500,5000 & $x$ value at right of grid \\
 			$dx$ & 0.1,1 & Spatial step size \\
 			$u(x,0)$ & $e^{-5x}$ & Level of $u$ at start of simulation \\
 			$v(x,0)$ & $1$ & Level of $v$ at start of simulation \\
 			$P(x,0)$ & $0$ & Level of $P$ at start of simulation \\
 			\hline
		\end{tabular}
	\end{center}
 	\caption{Numerical parameter values that were used in simulations. \label{table:NumericalVals_NumericalSim}}
\end{table}

\begin{table}[H]
	\begin{center}
		\begin{tabular}{ |c|c| } 
 			\hline
 			Parameter & Value(s) in simulations \\ 
 			\hline\hline
 			$d_1$ & 1 \\
 			$d_2$ & 1 \\
 			$\alpha$ & 1 \\
 			$\omega$ & 100 \\
 			$\theta$ & 1 \\
 			$\Gamma$ & 500 \\
 			$\beta$ & 20 \\
 			$k_0$ & [-5,5] \\
 			$k_1$ & [-10,10] \\
 			$k_2$ & [-10,10] \\
 			\hline
		\end{tabular}
	\end{center}
 	\caption{Parameter values from system in system \ref{Wang_Equation3} that were used in simulations. \label{table:ModelVals_NumericalSim}}
\end{table}

\bibliographystyle{abbrv}
\setlength{\bibhang}{0pt}
\bibliography{Wang_Bibliography}

\end{document}